\DeclareMathOperator{\idl}{Idl}
\DeclareMathOperator{\with}{\&}
\DeclareMathOperator{\thda}{{\rotatebox[origin=c]{-90}{$\twoheadrightarrow$}}}
\theoremstyle{plain}
  \newtheorem{thm}{Theorem}[section]
  \newtheorem{lem}[thm]{Lemma}
  \newtheorem{prop}[thm]{Proposition}
  \newtheorem{cor}[thm]{Corollary}
\theoremstyle{definition}
  \newtheorem{defn}[thm]{Definition}
  \newtheorem{ques}[thm]{Question}
  \newtheorem{exmp}[thm]{Example}
  \newtheorem{rem}[thm]{Remark}
\newtheorem*{SA}{Standing Assumption and Convention}
\newcommand{\ra}{\rightarrow}
\newcommand{\lra}{\longrightarrow}
\newcommand{\conlat}{[0,1]\text{-}{\sf ConLat}}
\newcommand{\QSup}{\sQ\text{-}{\sf Sup}}
\newcommand{\QOrd}{\sQ\text{-}{\sf Ord}}
\newcommand{\sub}{{\rm sub}}
\newcommand{\id}{{\rm id}}
\newcommand{\CP}{\mathcal{P}}
\newcommand{\sfm}{{\sf m}}
\newcommand{\sy}{{\sf y}}
\newcommand{\sQ}{{\sf Q}}
\newcommand{\bv}{\bigvee}
	\newcommand{\bw}{\bigwedge}
\begin{document}

\title{Continuous $[0,1]$-lattices and injective
 $[0,1]$-approach spaces }

\author{Junche Yu, Dexue Zhang  \\ {\small School of Mathematics, Sichuan University, Chengdu 610064, China}\\  {\small cqyjc@icloud.com, dxzhang@scu.edu.cn}  }

 \date{}

\maketitle

\begin{abstract}In 1972, Dana Scott proved a fundamental result on the connection between order and topology which says  that   injective $T_0$ spaces are precisely continuous lattices endowed with  Scott topology. This paper investigates whether this is true in an enriched context, where the enrichment is the quantale obtained by equipping the interval $[0,1]$   with a continuous t-norm. It is shown that for each continuous t-norm, the specialization $[0,1]$-order of a separated and injective $[0,1]$-approach space $X$  is a continuous $[0,1]$-lattice and the $[0,1]$-approach structure of $X$ coincides with the Scott $[0,1]$-approach structure of its specialization $[0,1]$-order; but, unlike in the classical situation, the converse fails in general.

\noindent\textbf{Keywords}    Continuous t-norm,   $[0,1]$-order, continuous $[0,1]$-lattice, injective $[0,1]$-approach space, Scott $[0,1]$-approach structure, $[0,1]$-cotopological space

\noindent \textbf{MSC(2020)} 54A05 54A40 54B30  18D20 18F60
\end{abstract}

\section{Introduction}

The specialization order  of a topological space $X$ is the order relation $$x\leq y\quad \text{if $x$ is in the closure of $\{y\}$}.$$    It is obvious that $X$ is  a $T_0$ space if and only if the specialization order satisfies the axiom of anti-symmetry. Taking specialization defines a functor \[\Omega:{\sf Top}\lra{\sf Ord}\] from the category of topological spaces to that of ordered sets and order-preserving maps. The specialization order of any space that satisfies the separation axiom of $T_1$   is always the discrete order, so, for a general  space, the functor $\Omega$ forgets  much information of that space.

There are spaces for which $\Omega$ ``forgets nothing''. If $X$ is an Alexandroff space, that means, every point of $X$ has a smallest neighborhood, then the  structure of $X$ can be recovered from its specialization order. Actually, the functor $\Omega$ has a left adjoint \[\Gamma:{\sf Ord}\lra{\sf Top}\] that sends each ordered set $(X,\leq)$ to the space having all the lower sets of $(X,\leq)$ as closed sets.  Alexandroff spaces are then those  spaces $X$ for which $\Gamma\Omega(X)=X$.

A celebrated result of Scott \cite{Scott72}
says that the structure of every injective $T_0$ space is also encoded in its specialization order. In the words of Scott, these spaces ``are at the same time complete
lattices whose topology is determined by the lattice structure in a
special way''. That  \emph{special way} means the functor \[\Sigma:{\sf Ord}^\uparrow\lra{\sf Top}\] from the category of ordered sets and Scott continuous maps (i.e., maps that preserve directed joins) to that of topological spaces,  topologizing each ordered set with its Scott topology. Precisely, a $T_0$ space $X$ is injective   if, and only if, the specialization order of $X$ is a continuous lattice and  the topology of $X$ coincides with the Scott topology of its specialization order.  The functor $\Sigma$, which ``throws new light on the connections between  topology and lattices''\footnote{Quoted from Scott \cite[page 98]{Scott72}}, plays a prominent role in domain theory \cite{Gierz2003,Goubault}.

In this paper, we investigate the relationship between enriched orders and enriched topologies; in particular, we  address  the question whether there is a Scott-type result in the enriched context. To be precise, we investigate the relationship between ordered sets and approach spaces valued in the quantale $([0,1],\with,1)$, where $\&$ is a continuous t-norm. That means,  we take $[0,1]$-ordered sets as enriched ordered sets and take  $[0,1]$-approach spaces as enriched topological spaces. It is proved that the specialization $[0,1]$-order of an injective and separated $[0,1]$-approach space $X$ is a continuous $[0,1]$-lattice   and  the structure of $X$ is completely determined by its specialization $[0,1]$-order.  But, the converse fails in general. For a continuous t-norm $\with$  that satisfies the condition (S) (see Proposition \ref{S} below),   a necessary and sufficient condition for all continuous $[0,1]$-lattices with the Scott $[0,1]$-approach structure to be injective is that $\with$ is isomorphic to the \L ukasiewicz t-norm. This shows that in the enriched context, the Scott-type result depends on the logic structure of the truth-values. This  is an example of a not uncommon  phenomenon in the theory of fuzzy sets: to choose a quantale is to choose to work with a specific logic, the validity of some statements may depend on your choice (that is, your logic).

We end the introduction with a few words on related works.

When  the continuous t-norm is isomorphic to the product t-norm,   the quantale $([0,1],\with,1)$ is isomorphic to Lawvere's quantale $([0,\infty]^{\rm op},+,0)$,  hence  $[0,1]$-approach spaces in this case are essentially approach spaces in the sense of Lowen \cite{RL89,Lowen15}. Inspired by the fact that injective objects in the category of partially ordered sets and order-preserving maps are precisely the complete lattices, in \cite{Hof2011,Hof2013}   Hofmann  characterizes  injective approach spaces  as \emph{cocomplete spaces}.  For a comprehensive account of the story of topological spaces being viewed as categories or lax algebras, we refer to the monograph \cite{Monoidal top}.
Furthermore,  injective approach  spaces are characterized in \cite{GH}  as continuous lattices equipped with a \emph{$[0,\infty]$-action} that satisfies certain conditions. The main difference  between these works and this paper is that here the focus is on an enriched version of \emph{Scott topology}.

In \cite{Yao16}  a frame-valued version of the result of Scott is established, where  fuzzy topological spaces (valued in a frame) play the role of enriched topological spaces and flat ideals (see \cite[Remark 3.5]{LZZ2020}) play the role of directed lower sets. Compared with \cite{Yao16}, we have a different kind of quantales (in which the semigroup operation need not be idempotent), a different kind of enriched topological spaces, and a different kind of enriched directed lower sets.

Finally, this paper is closely related to  and  a continuation    of \cite{LiZ18b}. The connection and the difference are: while \cite{LiZ18b} is essentially about $[0,1]$-approach spaces with respect to the product t-norm,  this paper deals with $[0,1]$-approach spaces for all continuous t-norms.

\section{Continuous t-norms and quantale-valued orders}

In this section we recall some basic notions needed in this paper and fix some notations.

By a quantale we mean a \emph{commutative  and unital  one}  in the sense of \cite{Rosenthal1990}.  Explicitly, a quantale
\[\sQ=(\sQ,\with,k)\]
is a commutative monoid with $k$ being the unit, such that the underlying set $\sQ$ carries a complete order with a bottom element $0$ and a top element $1$,   and that the multiplication $\with$ distributes over arbitrary joins. We say that $\sQ$ is \emph{integral}  if the unit $k$ coincides with the top element   $1$. The  multiplication $\&$ determines a binary operator $\ra$,   sometimes called the \emph{implication operator} of $\&$, via the adjoint property:
\[p\with q\leq r\iff q\leq p\ra r.\]

In the language of category theory, a quantale is precisely a small, complete and symmetric monoidal closed category \cite{Borceux1994,Kelly,Lawvere1973}.

Quantales obtained by endowing the unit interval $[0,1]$ with a continuous t-norm are of particular interest in this paper.
A continuous t-norm  \cite{Klement2000} is, actually,  a continuous map $\with\colon [0,1]^2\lra[0,1]$  that makes $([0,1],\with ,1)$ into a quantale. Such quantales play a decisive role in the BL-logic of H\'{a}jek \cite{Ha98}.

Basic   continuous t-norms and their implication operators are listed below:
\begin{enumerate}[label={\rm(\roman*)}] \setlength{\itemsep}{0pt}
\item  The G\"{o}del t-norm: \[ x\with y= \min\{x,y\}; \quad x\ra y=\begin{cases}
		1,&x\leq y,\\
		y,&x>y.
		\end{cases} \] The implication  operator $\ra$ of the G\"{o}del t-norm is  continuous  except at   $(x,x)$, $x<1$.

\item  The product t-norm:  \[ x\with_P y=xy; \quad x\ra y=\begin{cases}
		1,&x\leq y,\\
		y/x,&x>y.
		\end{cases} \] The implication operator  $\ra$
of the product t-norm is  continuous  except at   $(0,0)$. The quantale $([0,1], \with_P, 1)$ is  isomorphic to Lawvere's quantale $([0,\infty]^{\rm op},+,0)$ \cite{Lawvere1973}.

\item  The {\L}ukasiewicz t-norm:\[ x\with_{\L} y=\max\{0,x+y-1\}; \quad x\ra y=\min\{1-x+y,1\}. \] The implication operator  $\ra$
    of the {\L}ukasiewicz t-norm
    is   continuous on $[0,1]^2$.
	\end{enumerate}

Let $\with $ be a continuous t-norm. An element $p\in [0,1]$ is   \emph{idempotent}  if $p\with p=p$.

\begin{prop}{\rm(\cite[Proposition 2.3]{Klement2000})} \label{idempotent}
Let $\&$ be a continuous t-norm on $[0,1]$ and $p$ be an idempotent element of $\&$. Then $x\with y= \min\{x, y\} $ whenever $x\leq p\leq y$.
 \end{prop}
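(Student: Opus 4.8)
The plan is to reduce the statement to the single identity $x\with p=x$ for all $x\leq p$, and then to extract that identity from the intermediate value theorem combined with associativity and idempotency of $p$.

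First I would dispose of the two easy inequalities. Since $\with$ distributes over joins it is order-preserving in each variable, so from $y\leq 1$ we get $x\with y\leq x\with 1=x$, and from $p\leq y$ we get $x\with y\geq x\with p$. Hence it suffices to prove that $x\with p=x$ whenever $x\leq p$: granting this, $x=x\with p\leq x\with y\leq x$, so $x\with y=x$, and since $x\leq p\leq y$ forces $\min\{x,y\}=x$, the proposition follows. To prove $x\with p=x$ for $x\leq p$, consider the map $f\colon[0,p]\to[0,1]$, $f(t)=t\with p$. It is continuous, being a restriction of the continuous t-norm; it satisfies $f(0)=0\with p=0$ (the bottom element is absorbing, as $\with$ preserves the empty join) and $f(p)=p\with p=p$ by idempotency. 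By the intermediate value theorem $f$ maps $[0,p]$ onto $[0,p]$, so every $x\in[0,p]$ can be written as $x=t\with p$ for some $t\in[0,p]$. Then
\[
x\with p=(t\with p)\with p=t\with(p\with p)=t\with p=x,
\]
which is exactly what is needed.

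The argument is short, and the only real content is spotting the correct auxiliary map: once one applies the intermediate value theorem to $t\mapsto t\with p$ on $[0,p]$ and notes that its endpoint values are the absorbing element $0$ and the idempotent fixed point $p$, surjectivity promotes the lone fixed point $p$ to the entire interval via the associativity identity above. So the "obstacle'' is purely one of finding the right reformulation; no appeal to the ordinal-sum decomposition of continuous t-norms is required.
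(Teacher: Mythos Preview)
Your argument is correct. The reduction to $x\with p=x$ for $x\leq p$ is clean, and the intermediate value step together with associativity and idempotency does exactly what is needed; the only point worth making explicit is that $f$ lands in $[0,p]$ (since $t\with p\leq t\leq p$), so the IVT really gives surjectivity onto $[0,p]$ rather than merely that the image contains $[0,p]$.

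As for comparison: the paper does not prove this proposition at all---it simply quotes it from Klement--Mesiar--Pap. Your proof is in fact the standard one found there (their Proposition~2.3), so there is no methodological difference to discuss.
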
 It follows immediately  that $y\ra x=x$ whenever  $x< p\leq y$ for some idempotent   $p$. Another consequence of  Proposition \ref{idempotent} is that for any idempotent elements $p, q$    with $p<q$,  the restriction of $\with $ to $[p,q]$, which is also denoted by $\with$,  makes $[p,q]$ into a commutative quantale with $q$ being the unit element.    The following  theorem, known as the \emph{ordinal sum decomposition theorem},  is of fundamental importance.

\begin{thm} {\rm(\cite{Klement2000,Mostert1957})}
\label{ordinal sum} Let $\with $ be a continuous t-norm. If $a\in [0,1]$ is non-idempotent, then there exist idempotent elements $a^{-}, a^{+}\in [0,1]$ such that $a^-<a<a^+$ and   the quantale  $([a^{-},a^{+}],\with ,a^{+})$ is either isomorphic to  $([0,1],\with_{\L},1)$ or to $([0,1],\with_{P},1)$. \end{thm}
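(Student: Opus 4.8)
The plan is to reduce the assertion to the classical structure theory of continuous \emph{Archimedean} t-norms. First I would record that the set $E=\{p\in\I : p\with p=p\}$ of idempotents is closed: since the quantale is integral we have $x\with x\le x$ for all $x$, so $E$ is the zero set of the continuous map $x\mapsto x-(x\with x)$. As $0,1\in E$ but $a\notin E$, the point $a$ lies in a connected component $(a^-,a^+)$ of the open set $\I\setminus E$, and maximality of this component forces $a^-,a^+\in E$; clearly $a^-<a<a^+$. By Proposition~\ref{idempotent} and the discussion following it, the restriction of $\with$ to $[a^-,a^+]$ is a commutative quantale with unit $a^+$, and its idempotents are exactly $E\cap[a^-,a^+]=\{a^-,a^+\}$. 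Transporting this quantale along the order isomorphism $\psi\colon[a^-,a^+]\to\I$, $\psi(t)=(t-a^-)/(a^+-a^-)$, produces a continuous t-norm $*$ on $\I$ for which $\psi$ is a quantale isomorphism $([a^-,a^+],\with,a^+)\cong(\I,*,1)$ and whose only idempotents are $0$ and $1$. It therefore suffices to prove that such a t-norm $*$ is isomorphic to $(\I,\with_\L,1)$ or to $(\I,\with_P,1)$.

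Next I would check that $*$ is Archimedean. For $x\in(0,1)$ the sequence of $n$-fold powers $x,\ x*x,\ x*x*x,\dots$ is decreasing (as $*\le\min$), hence converges to some $c$; continuity of $*$ gives $c*c=c$, so $c$ is idempotent, and since $c\le x<1$ we must have $c=0$. Thus the powers of every $x<1$ tend to $0$, i.e.\ $*$ is a continuous Archimedean t-norm. Now I would invoke the Mostert--Shields/Acz\'el representation theorem (\cite{Klement2000,Mostert1957}): there is a continuous strictly decreasing \emph{additive generator} $f\colon\I\to[0,\infty]$ with $f(1)=0$ such that $x*y=f^{(-1)}\!\big(f(x)+f(y)\big)$, where $f^{(-1)}$ is the pseudo-inverse of $f$. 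If $f(0)=\infty$, then $f$ is a homeomorphism $\I\to[0,\infty]$ and $\phi:=\exp(-f)\colon\I\to\I$ is an order isomorphism with $\phi(x*y)=\phi(x)\,\phi(y)$, so $(\I,*,1)\cong(\I,\with_P,1)$. If $f(0)=s<\infty$, set $g:=f/s$ and $\phi:=1-g$; then $\phi\colon\I\to\I$ is an order isomorphism, $g(x*y)=\min\{g(x)+g(y),1\}$, and hence $\phi(x*y)=\max\{\phi(x)+\phi(y)-1,0\}=\phi(x)\with_\L\phi(y)$, so $(\I,*,1)\cong(\I,\with_\L,1)$. Composing the appropriate isomorphism with $\psi$ from the first step completes the argument.

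The only genuinely hard ingredient is the existence of the continuous additive generator in the last step — the Mostert--Shields structure theorem for one-parameter topological semigroups on a real interval — which I would cite rather than reprove, since it is precisely the result the statement attributes to \cite{Klement2000,Mostert1957}. Everything else (closedness of $E$, the connected-component argument, the transport along $\psi$, and the Archimedean property) is elementary; the one place demanding a little care is the bookkeeping for the pseudo-inverse $f^{(-1)}$ in the nilpotent case, that is, the truncation arising when $g(x)+g(y)>1$, which is exactly what produces the {\L}ukasiewicz formula with its cutoff at $0$.
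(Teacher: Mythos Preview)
Your argument is correct and is essentially the standard proof of the ordinal sum decomposition theorem as found in the references \cite{Klement2000,Mostert1957}. Note, however, that the paper does not give its own proof of this theorem: it is stated as a known result with citation only, so there is no ``paper's proof'' to compare against. Your write-up supplies exactly the kind of proof one would expect from those sources --- closedness of the idempotent set, the connected-component argument to isolate $[a^-,a^+]$, the affine transport to $[0,1]$, the Archimedean property via convergence of powers to an idempotent, and finally the Mostert--Shields/Acz\'el generator dichotomy between the strict (product) and nilpotent ({\L}ukasiewicz) cases.
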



\begin{prop}{\rm(\cite{LZ2020}\label{S})}
For a continuous t-norm $\with$ on $[0,1]$, the following statements are equivalent:
\begin{enumerate}[label=\rm(\arabic*)] \setlength{\itemsep}{0pt}
	\item  The implication $\ra\colon [0,1]\times [0,1]\lra [0,1]$ is continuous at each point off the diagonal $\{(x,x)\mid x\in[0,1]\}$.
	\item  For each non-idempotent element $a\in[0,1]$, the quantale $([a^-,a^+],\with, a^+)$ is isomorphic to $([0,1],\with_P,1)$ whenever $a^->0$.
\end{enumerate}
\end{prop}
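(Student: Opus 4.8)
The plan is to use the ordinal sum decomposition of Theorem~\ref{ordinal sum} to write the implication $\ra$ explicitly and then pinpoint its off-diagonal discontinuities. Let $E\subseteq[0,1]$ be the (closed) set of idempotent elements of $\with$; by Theorem~\ref{ordinal sum} the connected components of $[0,1]\setminus E$ are exactly the open intervals $(a^-,a^+)$ occurring there, each summand $[a^-,a^+]$ carrying, through an order isomorphism $\phi\colon[a^-,a^+]\lra[0,1]$, a copy of $\with_P$ or of $\with_{\L}$; write $\ra_i$ for the corresponding implication. I would first check that for $y<x$,
\[
x\ra y=
\begin{cases}
y, & (y,x]\cap E\neq\varnothing,\\
\phi^{-1}\bigl(\phi(x)\ra_i\phi(y)\bigr), & (y,x]\cap E=\varnothing
\end{cases}
\]
where in the second line $x$ and $y$ lie in a common summand with $a^-\le y<x<a^+$ (and $x\ra y=1$ when $x\le y$). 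The first line is precisely the consequence of Proposition~\ref{idempotent} recorded after its statement; the second is a short evaluation of $\sup\{z:x\with z\le y\}$, obtained by splitting the range of $z$ at $a^-$ and $a^+$ and using that $\with$ restricts to $\min$ outside the summand.

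For (1)$\Rightarrow$(2) I argue by contraposition. If (2) fails, there is a non-idempotent $a$ with $a^->0$ for which $([a^-,a^+],\with,a^+)$ is \emph{not} isomorphic to the product quantale, hence, by Theorem~\ref{ordinal sum}, is isomorphic to $([0,1],\with_{\L},1)$. Fix any $x_0\in(a^-,a^+)$ and consider the point $(x_0,a^-)$, which is off the diagonal since $x_0>a^-$. The formula gives $x_0\ra a^-=\phi^{-1}\bigl(\phi(x_0)\ra_{\L}0\bigr)=\phi^{-1}\bigl(1-\phi(x_0)\bigr)$, strictly above $a^-$ because $\phi(x_0)<1$; but along the points $(x_0,y)$ with $y\uparrow a^-$ one has $a^-\in(y,x_0]\cap E$, so $x_0\ra y=y$, which tends to $a^-$. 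Hence $\ra$ is discontinuous at the off-diagonal point $(x_0,a^-)$, and (1) fails. The decisive point is that a {\L}ukasiewicz block not anchored at $0$ can be approached from below, where $\ra$ collapses to the second projection, whereas inside the block the value $\phi^{-1}\bigl(1-\phi(x)\bigr)$ stays strictly above $a^-$ until $x$ reaches $a^+$.

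For (2)$\Rightarrow$(1), assume (2) and fix an off-diagonal $(x_0,y_0)$. If $x_0<y_0$ then $\ra\equiv1$ on a neighbourhood; if $x_0>y_0$ and some idempotent lies strictly between $y_0$ and $x_0$ then $\ra$ agrees with the second projection on a neighbourhood; either way $\ra$ is continuous at $(x_0,y_0)$. The remaining case is $x_0>y_0$ with $(y_0,x_0)\cap E=\varnothing$, so $[y_0,x_0]$ lies in a single summand $[a^-,a^+]$ with $a^-\le y_0<x_0\le a^+$. If $a^-<y_0$ and $x_0<a^+$, then near $(x_0,y_0)$ the implication equals $\phi^{-1}\bigl(\phi(x)\ra_i\phi(y)\bigr)$, continuous there since $\phi(y_0)>0$ keeps $(\phi(x_0),\phi(y_0))$ away from the only bad point $(0,0)$ of $\ra_P$ (and $\ra_{\L}$ is continuous everywhere); if $x_0=a^+$, then $x_0\ra y_0=y_0$, and the single approach direction seeing the summand, $x\uparrow a^+$, yields the same limit because $1\ra_i t=t$ for both $\ra_P$ and $\ra_{\L}$. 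The one delicate configuration is $y_0=a^-<x_0<a^+$, and this is exactly where (2) enters: when $a^->0$, (2) forces the summand to be product-type, so $x_0\ra a^-=\phi^{-1}\bigl(\phi(x_0)\ra_P0\bigr)=\phi^{-1}(0)=a^-$, which matches both the limit along $y\uparrow a^-$ (where $x_0\ra y=y$, tending to $a^-$) and the limit along $y\downarrow a^-$ (where $x_0\ra y=\phi^{-1}\bigl(\phi(x_0)\ra_P\phi(y)\bigr)$, tending to $\phi^{-1}(0)=a^-$); when $a^-=0$ there is no $y<a^-$ to cause trouble, so only the inside-summand formula matters and it is continuous.

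I expect the main obstacle to be the bookkeeping in the last step: matching the ``$x\ra y=y$'' branch against the ``inside the summand'' branch at the four kinds of corner point of a summand, and checking that \emph{every} sequence $(x,y)\to(x_0,y_0)$ has $x\ra y\to x_0\ra y_0$. Nothing here is deep --- everything is dictated by Theorem~\ref{ordinal sum} and the elementary continuity behaviour of $\ra_P$, $\ra_{\L}$ and $\min$ --- but the analysis around the boundaries of the product and {\L}ukasiewicz blocks has to be done with care.
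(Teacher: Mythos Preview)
The paper does not actually prove this proposition; it is quoted from \cite{LZ2020} and stated without argument. So there is no ``paper's own proof'' to compare against.

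Your argument is sound and is the natural proof one would expect: reduce everything to the ordinal sum decomposition of Theorem~\ref{ordinal sum}, write down the implication piecewise, and observe that the only way to create an off-diagonal discontinuity is to have a {\L}ukasiewicz block whose lower endpoint $a^-$ is strictly positive (so that one can approach $(x_0,a^-)$ from below and see the jump between the ``second projection'' branch and the ``inside the block'' branch). Your contrapositive for $(1)\Rightarrow(2)$ is clean and correct.

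For $(2)\Rightarrow(1)$ your case split is the right one, and the crucial observation --- that under~(2) the value $x_0\ra a^-$ collapses to $a^-$ in a product block and hence matches the limit from below --- is exactly the point. Two small remarks on the boundary bookkeeping you flagged yourself. First, in the case $x_0=a^+$ you should also allow $x$ to approach from \emph{above} $a^+$ (possibly through a neighbouring block); this is harmless because then $a^+\in(y,x]\cap E$ forces $x\ra y=y$, giving the same limit $y_0$. Second, the simultaneous corner $x_0=a^+$, $y_0=a^-$ is not treated explicitly; it is covered by combining your two endpoint arguments, but it is worth a sentence. None of this is a genuine gap --- your last paragraph already anticipates that the work lies in tracking all approach directions at the block boundaries, and the monotonicity of $\ra$ in each variable makes this routine once the four corner configurations are listed.
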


We say that a continuous t-norm $\with $ on $[0,1]$ satisfies the \emph{condition (S)} if it satisfies   the equivalent conditions in Proposition \ref{S}.

A \emph{$\sQ$-valued order} (a $\sQ$-order for short) is a map $\alpha\colon X\times X\lra \sQ$ such that for all $x,y,z\in X$,
$$\alpha(x,x)\geq k\quad \text{and}\quad  \alpha(y,z)\with \alpha(x,y)\leq\alpha(x,z).$$
The pair $(X,\alpha)$  is call a \emph{$\sQ$-ordered set} (or, a $\sQ$-category in the language of enriched categories).

If $\alpha$ is a $\sQ$-order on $X$, it follows from the commutativity of $\with$ that $\alpha^{\rm op}(x,y)\coloneqq\alpha(y,x)$ is also a $\sQ$-order on $X$, called the opposite of $\alpha$.

It is customary to write $X$ for the pair $(X,\alpha)$ and write $X(x,y)$ for $\alpha(x,y)$.
We say that a map $f\colon X\lra Y$ between $\sQ$-ordered sets \emph{preserves $\sQ$-order}   if $X(x,y)\leq Y(f(x),f(y))$ for all $x,y\in X$.  $\sQ$-ordered sets and $\sQ$-order-preserving maps constitute a (locally ordered) category
 \[\sQ\text{-}\sf Ord.\]

\begin{rem} In 1973, Lawvere   \cite{Lawvere1973} argued that any closed category, a quantale in particular, can be viewed as the table of truth-values of a generalized logic, hence the theory of enriched categories is of a logic nature. This idea has led to the study of quantale-enriched categories, generalized metric spaces in particular,   as \emph{quantitative domains}. We list a few of the early works here: \cite{AR89,BvBR1998,Flagg1997,FSW,Wagner97}. \end{rem}

The \emph{underlying order} of a $\sQ$-ordered set $X$ refers to the order $\leq$ given by $$x\leq   y \quad {\rm if}~ k\leq X(x,y) .$$ For convenience, for each $\sQ$-ordered set $X$, we shall write $X_0$ for the ordered set obtained by equipping $X$ with the underlying order.

Two elements $x,y$ of a $\sQ$-ordered set $X$ are \emph{isomorphic} if \(k\leq X(x,y)\wedge X(y,x).\) A $\sQ$-ordered set $X$ is \emph{separated} if isomorphic elements are identical; that is, \[k\leq X(x,y)\wedge X(y,x) \quad \Rightarrow\quad x=y.\]
Said differently, $X$ is separated if $X_0$ satisfies the axiom of anti-symmetry.

\begin{exmp}
For all $x,y\in \sQ$, let
$$\alpha_L(x,y)=x\ra y.$$ Then $\alpha_L$ is a separated $\sQ$-order on $\sQ$. We denote the opposite of $\alpha_L$ by $\alpha_R$; that is, $$  \alpha_R(x,y)=y\ra x.$$
Both $(\sQ,\alpha_L)$ and $(\sQ,\alpha_R)$ play important roles in this paper.
\end{exmp}
\begin{exmp}For each set $X$, the map \[\sub_X\colon \sQ^X\times\sQ^X\lra\sQ, \quad  \sub_X(\phi,\psi)=\bigwedge_{x\in X}\phi(x)\ra\psi(x)\] is a  $\sQ$-order on the set $\sQ^X$, known as the fuzzy inclusion $\sQ$-order.   If $X$ is a singleton set, then $(\sQ^X,\sub_X)$ degenerates to the $\sQ$-ordered set $(\sQ,\alpha_L)$.
\end{exmp}

Taking underlying order defines a functor from $\sQ\text{-}\sf Ord$ to the  category of ordered sets and order-preserving maps $$\iota\colon  \sf \sQ\text{-}Ord\lra Ord.$$
The functor $\iota$ has a left adjoint $$\omega\colon  \sf Ord\lra\sQ\text{-}Ord $$
that sends an order $\leq$ on a set $X$ to the $\sQ$-order   $\alpha$ on $X$ given by   $\alpha(x,y)=k$ if $x\leq y$, and $\alpha(x,y)=0$  if $x\not\leq y$.

The category $\sQ\text{-}\sf Ord$ is complete and cocomplete; for example, the product $\prod_iX_i$ of a family of $\sQ$-ordered sets $\{X_i\}_i$ is given by the Cartesian product of the sets $\{X_i\}_i$ and
$$\prod_iX_i(\vec{x},\vec{y})=\bigwedge_iX_i(x_i,y_i) $$ for all $\vec{x},\vec{y}\in\prod_iX_i$.
It is clear that for each set $X$,   $(\sQ^X,\sub_X)$ is a product of the $\sQ$-ordered set $(\sQ,\alpha_L)$.

Let $X,Y$ be $\sQ$-ordered sets; let $f\colon X\lra Y$ and $g\colon Y\lra X$ be  maps.  We say that $f$ is left adjoint to $g$ (or, $g$ is right adjoint to $f$) and write $f\dashv g$, if
$$Y(f(x),y)=X(x,g(y))$$
for all $x\in X$ and $y\in Y$.
This is a special case of enriched adjunctions in  enriched category theory \cite{Borceux1994,Kelly}.
\begin{thm}\label{Characterization of adjoints} {\rm(\cite[page 295]{Stubbe2006})} Let $f\colon X\lra Y$ and $g\colon Y\lra X$ be a pair of maps between $\sQ$-ordered sets. Then $f$ is left adjoint to $g$ if and only if the following conditions are satisfied: \begin{enumerate}[label={\rm(\roman*)}] \setlength{\itemsep}{0pt}
  \item Both $f$ and $g$ preserve $\sQ$-order; \item The map $f\colon X_0\lra Y_0$ is left adjoint to $g\colon Y_0\lra X_0$; that is,  for all $x\in X$ and $y\in Y$, $f(x)\leq y\iff x\leq g(y)$. \end{enumerate}\end{thm}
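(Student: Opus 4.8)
The plan is to prove both implications directly from the defining identity $Y(f(x),y)=X(x,g(y))$ of an adjunction, using nothing beyond the two axioms of a $\sQ$-order, namely reflexivity $X(x,x)\geq k$ and transitivity $X(y,z)\with X(x,y)\leq X(x,z)$. The one device used throughout is that, since $X(z,z)\geq k$, reflexivity lets one insert a unit factor and thereby upgrade an inequality coming from $\sQ$-order-preservation into one to which the transitivity law can be applied.

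\emph{From $f\dashv g$ to conditions (i) and (ii).} I would first substitute $y=f(x')$ into the identity to get $Y(f(x),f(x'))=X(x,g(f(x')))$, observe that $X(x',gf(x'))=Y(f(x'),f(x'))\geq k$ (the unit of the adjunction on underlying orders, i.e.\ $x'\leq gf(x')$ in $X_0$), and then chain this through transitivity to obtain $X(x,x')\leq X(x',gf(x'))\with X(x,x')\leq X(x,gf(x'))=Y(f(x),f(x'))$, so that $f$ preserves $\sQ$-order; the dual substitution $x=g(y)$ together with the counit $fg(y)\leq y$ handles $g$, establishing (i). Condition (ii) then drops out immediately, since $f(x)\leq y$ is by definition $k\leq Y(f(x),y)$, which equals $k\leq X(x,g(y))$, i.e.\ $x\leq g(y)$.

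\emph{From conditions (i) and (ii) to $f\dashv g$.} Assuming (i) and (ii), I would first read off from (ii) the unit inequality $k\leq X(x,gf(x))$ and the counit inequality $k\leq Y(fg(y),y)$. Then applying $\sQ$-order-preservation of $f$ and transitivity against the counit gives
\[X(x,g(y))\leq Y(f(x),fg(y))\leq Y(fg(y),y)\with Y(f(x),fg(y))\leq Y(f(x),y),\]
while applying $\sQ$-order-preservation of $g$ and transitivity against the unit gives the reverse inequality
\[Y(f(x),y)\leq X(gf(x),g(y))\leq X(x,gf(x))\with X(gf(x),g(y))\leq X(x,g(y)),\]
and the two displays together yield the desired equality $Y(f(x),y)=X(x,g(y))$.

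I do not expect a genuine obstacle here: this is essentially the $\sQ$-enriched rewording of the classical fact that an order adjunction is the same as a pair of order-preserving maps satisfying the unit--counit inequalities. The only point requiring mild care is the placement of the two factors in the transitivity law — harmless in the present setting, since $\with$ is commutative.
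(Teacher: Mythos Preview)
Your argument is correct. Both implications are handled exactly as one would expect: the forward direction extracts the unit and counit from the adjunction identity at $y=f(x')$ and $x=g(y')$, then uses them with transitivity to get $\sQ$-order-preservation; the backward direction reverses this by first reading off the unit and counit from (ii) and then sandwiching with order-preservation and transitivity to recover $Y(f(x),y)=X(x,g(y))$. Your remark about commutativity of $\with$ covering the order of factors in the transitivity law is apt and is all that is needed.

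As for comparison with the paper: the paper does not supply its own proof of this theorem. It is stated with a citation to \cite[page 295]{Stubbe2006} and then used as a black box (for instance in the proof of Proposition~\ref{chacl} and Proposition~\ref{product}). Your proof is precisely the standard direct verification one finds behind that citation, so there is nothing to contrast; you have simply filled in what the paper leaves to the reference.
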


Suppose that $X$ is a $\sQ$-ordered set. A  \emph{weight}  of  $X$, also known as a \emph{lower  fuzzy set} of  $X$, is a map $\phi\colon  X\lra \sQ$ such that for all $x,y\in X$,
$$\phi(y)\with X(x,y)\leq\phi(x).$$
In other words, a weight  of $X$ is a $\sQ$-order-preserving map $\phi\colon X\lra(\sQ,\alpha_R)$.  The weights of $X$ constitute a $\sQ$-ordered set $\mathcal{P}X$ with
$$\mathcal{P}X(\phi_1,\phi_2)\coloneqq\sub_X(\phi_1,\phi_2).$$

For each $x\in X$, $X(-,x)$ is a weight of $X$   and we have the following:

\begin{lem}[Yoneda lemma]
Let $X$ be a $\sQ$-ordered set and $\phi$ be a weight of $X$, then
$$\mathcal{P}X(X(-,x),\phi)=\phi(x).$$
\end{lem}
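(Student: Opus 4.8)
The plan is to unwind the definitions and reduce the claim to two inequalities. By the definition of $\mathcal{P}X$ and of the fuzzy inclusion $\sQ$-order,
$$\mathcal{P}X(X(-,x),\phi)=\sub_X(X(-,x),\phi)=\bigwedge_{y\in X}\bigl(X(y,x)\ra\phi(y)\bigr),$$
so it suffices to show that this meet equals $\phi(x)$.

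For the inequality ``$\leq$'' I would simply instantiate the meet at the index $y=x$, obtaining $\mathcal{P}X(X(-,x),\phi)\leq X(x,x)\ra\phi(x)$. Since $k\leq X(x,x)$ and the implication $\ra$ is antitone in its first variable (an immediate consequence of the adjunction $p\with q\leq r\iff q\leq p\ra r$, by comparing $p_1\with q$ with $p_2\with q$ when $p_1\leq p_2$), this is $\leq k\ra\phi(x)=\phi(x)$, the last equality holding because $k$ is the unit, so $k\with q=q$. For the reverse inequality ``$\geq$'', again by the adjunction it is enough to prove $\phi(x)\with X(y,x)\leq\phi(y)$ for every $y\in X$ and then take the meet over $y$; but this inequality is precisely the defining condition of a weight, namely $\phi(y)\with X(x,y)\leq\phi(x)$, read with the two points renamed, so there is nothing further to check.

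Honestly there is no real obstacle here: this is the enriched Yoneda lemma and the argument is essentially two lines. The only points requiring attention are that $\sub_X$ is built from the implication $\ra$ rather than from $\with$, so one must invoke its adjointness and its antitonicity in the first argument instead of monotonicity, and that one of the two inequalities is literally a transcription of the weight axiom. One could alternatively compress the whole proof into a single chain of equivalences using the adjunction throughout, but splitting it into the two inequalities makes transparent which axiom is responsible for which direction ($k\leq X(x,x)$ for ``$\leq$'', the weight condition for ``$\geq$'').
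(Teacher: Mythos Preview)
Your proof is correct. The paper itself does not prove this lemma at all; it merely records that the statement is a special case of the Yoneda lemma in enriched category theory and cites \cite{Borceux1994,Stubbe2005}. What you have supplied is the standard direct verification, and every step is sound: unfolding $\mathcal{P}X(X(-,x),\phi)=\bigwedge_{y}\bigl(X(y,x)\ra\phi(y)\bigr)$, instantiating at $y=x$ together with $k\leq X(x,x)$ and antitonicity of $\ra$ in the first variable for one inequality, and invoking the weight axiom $\phi(x)\with X(y,x)\leq\phi(y)$ (which is exactly the defining condition with the variables relabelled) via the adjunction for the other. There is nothing to compare approach-wise, since the paper defers entirely to the literature; your self-contained two-line argument is precisely how one would spell it out.
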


The above lemma is a special case of the Yoneda lemma in enriched category theory, see e.g. \cite{Borceux1994,Stubbe2005}. The Yoneda lemma entails that the map
$$\sy\colon X\lra\mathcal{P}X, \quad x\mapsto X(-,x)$$  is    an embedding if $X$ is separated. By abuse of language,  we call it the \emph{Yoneda embedding} no matter $X$ is separated or not.

Each $\sQ$-order-preserving map $f\colon X\lra Y$ gives rise to an adjunction $f^\ra\dashv f^\leftarrow$ between $\CP X$ and $\CP Y$, where  \[f^\ra\colon \mathcal{P}X\lra\mathcal{P}Y,\quad f^\ra(\phi)(y)=\bigvee_{x\in X} \phi(x)\with Y(y,f(x)) \]
and \[f^\leftarrow\colon\mathcal{P}Y\lra\mathcal{P}X, \quad f^\leftarrow(\psi)(x)=\psi(f(x)).\]

Let  $a$ be an element and $\phi$ be a weight of a $\sQ$-ordered set $X$. We say that $a$ is a  \emph{supremum}  of $\phi$ if
$$X(a,y)=\CP X(\phi, X(-,y))$$
for all $y\in X$. Suprema of a weight, if exist, are unique up to isomorphism. We say that  $X$ is \emph{cocomplete} if every weight of $X$ has a supremum. It is clear that $X$ is cocomplete if and only if the Yoneda embedding $\sy\colon X\lra\mathcal{P}X$ has a left adjoint (see e.g. \cite{Stubbe2005}).

\begin{exmp}(Examples 2.11 and   2.12 in \cite{LZZ2020}) Let $X$ be $\sQ$-ordered set. \begin{enumerate}[label={\rm(\roman*)}] \setlength{\itemsep}{0pt}
  \item If $\psi\colon X\lra (\sQ,\alpha_L)$ is a $\sQ$-order-preserving map, then for each weight $\lambda$ of $X$, \begin{equation}\label{composition as sup} \sup\psi^\ra(\lambda)=\bv_{x\in X}\lambda(x)\with\psi(x).\end{equation} \item Let $\phi$ be a weight of $X$, viewed as a $\sQ$-order-preserving map $X\lra (\sQ,\alpha_R)$. Then for each weight $\lambda$ of $X$, \begin{equation}\label{inclusion as sup} \sup\phi^\ra(\lambda) = \sub_X(\lambda,\phi).\end{equation} \end{enumerate} \end{exmp}

We say that a $\sQ$-ordered set $X$ is \emph{complete} if $X^{\rm op}$ is cocomplete. It is known that a $\sQ$-ordered set  is cocomplete if and only if it is complete and that a $\sQ$-order-preserving map $f\colon X\lra Y$ between cocomplete $\sQ$-ordered sets is a left adjoint if and only if $f$ \emph{preserves suprema} in the sense that \(f(\sup\phi)=\sup f^\ra(\phi)\) for each weight $\phi$ of $X$ (see e.g. \cite{Stubbe2005}).

A $\sQ$-ordered set is called a \emph{complete $\sQ$-lattice} if it is separated and  complete (or equivalently, cocomplete). The category of complete $\sQ$-lattices and left adjoints is denoted by \[\QSup.\]

For each $\sQ$-ordered set $X$, $\CP X$ is  a complete $\sQ$-lattice. The  left adjoint of the Yoneda embedding $\sy_{\CP X}\colon\CP X\lra\CP\CP X$ is given by $\sy_X^\leftarrow\colon \CP\CP X\lra\CP X$. A direct calculation shows that for each $\Phi\in\CP\CP X$, \[ \sy_X^\leftarrow(\Phi)= \bv_{\phi\in\CP X}\Phi(\phi)\with\phi.\]
The assignment \[f\colon X\lra Y\quad \mapsto\quad f^\ra\colon\CP X\lra\CP Y\] defines a functor $\CP\colon\QOrd\lra\QSup$ that is left adjoint to the forgetful functor $U\colon\QSup\lra\QOrd$. The monad on the locally ordered category $\sQ$-{\sf Ord} arising from the adjunction $\CP\dashv U$ is denoted by \[\mathbb{P}=(\CP,\sfm,\sy).\]     Explicitly, for a $\sQ$-ordered set $X$, \begin{itemize} \setlength{\itemsep}{0pt}\item $\CP X$ is the complete $\sQ$-lattice of all weights of $X$; \item the unit is the Yoneda embedding $\sy_X\colon X\lra\CP X$; \item the multiplication  \(\sfm_X=\sup_{\CP X}=\sy_X^\leftarrow\colon\CP\CP X\lra \CP X\).
\end{itemize}

The monad $\mathbb{P}=(\CP,\sfm,\sy)$ is of Kock-Z\"{o}berlein type and the category of $\mathbb{P}$-algebras and $\mathbb{P}$-homomorphisms is precisely the category of   $\sQ$-complete lattices and left adjoints  (see e.g. \cite{Stubbe2005}); that is,
\[\QSup=\mathbb{P}\text{-}{\sf Alg}.\]
In particular, the forgetful functor $\QSup\lra\QOrd$ is monadic.

The following conclusion is an instance of \cite[Proposition 3.1]{LZ2020}, which is quite likely to have appeared somewhere else.

\begin{prop} \label{retractcomplete}
Every retract of a complete $\sQ$-lattice in $\sQ\text{-}\sf Ord$ is a complete $\sQ$-lattice.
\end{prop}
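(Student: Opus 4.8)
The plan is to reduce the statement to two facts recalled in the excerpt: a $\sQ$-ordered set is cocomplete exactly when its Yoneda embedding has a left adjoint, and a pair of $\sQ$-order-preserving maps forms an adjunction as soon as it forms one between the underlying ordered sets (Theorem~\ref{Characterization of adjoints}). I would write the hypothesis as $\sQ$-order-preserving maps $s\colon A\lra X$ and $r\colon X\lra A$ with $r\circ s=\id_A$ and $X$ a complete $\sQ$-lattice, and then show that $A$ is separated and cocomplete.

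Separatedness is quick: if $k\leq A(a,b)\wedge A(b,a)$, then $k\leq X(s(a),s(b))\wedge X(s(b),s(a))$ because $s$ preserves $\sQ$-order, so $s(a)=s(b)$ by separatedness of $X$, hence $a=r(s(a))=r(s(b))=b$. For cocompleteness, $X$ is cocomplete, so $\sy_X$ has a left adjoint $\sup\nolimits_X\colon\CP X\lra X$ with $\sup\nolimits_X\circ\sy_X=\id_X$. I would propose
\[\ell_A\;\coloneqq\;r\circ\sup\nolimits_X\circ\,s^\ra\colon\;\CP A\lra A\]
as a left adjoint of $\sy_A$; granting this, the cocompleteness criterion finishes the proof. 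The verifications split up as follows. Naturality of the Yoneda embedding gives $s^\ra\circ\sy_A=\sy_X\circ s$, whence $\ell_A\circ\sy_A=r\circ\sup\nolimits_X\circ\sy_X\circ s=r\circ s=\id_A$. Both $\ell_A$ and $\sy_A$ preserve $\sQ$-order: $\ell_A$ is a composite of $\sQ$-order-preserving maps ($r$ by hypothesis, $\sup\nolimits_X$ as a left adjoint, $s^\ra=\CP(s)$ as the image of $s$ under the functor $\CP$), while for $\sy_A$ the Yoneda lemma gives $\CP A(\sy_A(a),\sy_A(b))=A(a,b)$. By Theorem~\ref{Characterization of adjoints} it then remains only to check that $\ell_A$ is left adjoint to $\sy_A$ between the underlying ordered sets.

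With monotonicity of $\ell_A$ and $\sy_A$ in hand, this reduces to the counit $\ell_A\circ\sy_A\leq\id_A$, immediate from $\ell_A\circ\sy_A=\id_A$, together with the unit $\id_{\CP A}\leq\sy_A\circ\ell_A$, i.e. $\phi(a)\leq A\bigl(a,\ell_A(\phi)\bigr)$ for every weight $\phi$ of $A$ and every $a\in A$. This last inequality is the only genuine computation, and I expect it to be the one point needing care. It goes as follows: from $\sup\nolimits_X\dashv\sy_X$ one has $\psi(x)\leq X(x,\sup\nolimits_X\psi)$ for all $\psi\in\CP X$ and $x\in X$; applied to $\psi=s^\ra(\phi)$ at $x=s(a)$ this gives $s^\ra(\phi)(s(a))\leq X\bigl(s(a),\sup\nolimits_X s^\ra(\phi)\bigr)$. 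Since $s^\ra(\phi)(s(a))=\bigvee_{a'\in A}\phi(a')\with X(s(a),s(a'))\geq\phi(a)$ (take $a'=a$ and use $X(s(a),s(a))\geq k$), we obtain $\phi(a)\leq X\bigl(s(a),\sup\nolimits_X s^\ra(\phi)\bigr)$; applying the $\sQ$-order-preserving map $r$ and using $r\circ s=\id_A$ turns this into $\phi(a)\leq A\bigl(a,r(\sup\nolimits_X s^\ra(\phi))\bigr)=A\bigl(a,\ell_A(\phi)\bigr)$. Hence $\ell_A\dashv\sy_A$, so $\sy_A$ has a left adjoint, $A$ is cocomplete, and, being separated, a complete $\sQ$-lattice.

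Conceptually this is just the standard fact that algebras for a lax-idempotent (Kock-Z\"{o}berlein) $2$-monad are closed under split retracts, here applied to $\mathbb{P}$; the argument above is merely an explicit unwinding of that principle, with the sole delicate step being the isolation of $\phi(a)$ inside $s^\ra(\phi)(s(a))$ and the transport of the resulting inequality back along $r$.
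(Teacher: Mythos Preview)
Your proof is correct. The paper itself does not give a proof of this proposition at all: it simply records that the statement is an instance of \cite[Proposition 3.1]{LZ2020} and moves on. What you have written is essentially the standard argument behind that cited result, specialised to the monad $\mathbb{P}$; in particular, your closing remark that this is the KZ-monad principle ``algebras are closed under retracts'' is exactly the content of the cited proposition. So there is no discrepancy in approach to discuss---you have supplied an explicit proof where the paper defers to the literature, and the computation you flag as delicate (isolating $\phi(a)$ inside $s^\ra(\phi)(s(a))$ and transporting along $r$) is carried out correctly.
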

As in the classical situation, complete $\sQ$-lattices are precisely  the separated and injective objects in   $\QOrd$.

\section{Quantale-valued approach spaces}

While a $\sQ$-ordered set  is   an ordered set enriched over $\sQ$, a $\sQ$-approach space can be thought of as a   topological space enriched over $\sQ$.

\begin{defn} Let $\sQ=(\sQ,\with,k)$ be a quantale.
A $\sQ$-valued approach structure on a  set $X$ is a map $\delta\colon X\times 2^X\lra \sQ$ subject to the  following conditions: for all $x\in X$, and $A,B\in 2^X$,
\begin{itemize} \setlength{\itemsep}{0pt}
	\item[(A1)] $\delta(x,\{x\})\geq k$;
	\item[(A2)] $\delta(x,\varnothing)=0$;
	\item[(A3)] $\delta(x,A\cup B)= \delta(x,A)\vee\delta(x,B)$;
	\item[(A4)] $\delta(x,A)\geq  \Big(\bw\limits_{b\in B}\delta(b,A)\Big)\with\delta(x,B)$.
\end{itemize}
The pair $(X,\delta)$  is called a $\sQ$-valued approach space,   a $\sQ$-approach space for simplicity.
\end{defn}

\begin{rem}In a $\sQ$-approach space, the value $\delta(x,A)$ measures how close a point is to a subset. When $\sQ$ is the boolean algebra $(\{0,1\},\wedge,1)$, a $\sQ$-approach space is precisely a topological space with $\delta(x,A)$ denoting whether $x$ is in the closure of $A$.  So,  $\sQ$-approach spaces can be viewed as ``quantitative topological spaces''; actually, they are called \emph{$\sQ$-valued topological spaces} in \cite{LT17a,LT17b}. $\sQ$-approach spaces have appeared in the literature under different names. In \cite{FH90,Ho78}, $\sQ$-approach structures are called  \emph{fuzzy contiguity relations}. If $\sQ$ is the unit interval equipped with a left continuous t-norm,  such spaces  are  \emph{probabilistic topological spaces} in the sense of  Frank \cite{Frank} with some extra requirements. In the case that $\sQ$ is Lawvere's quantale $([0,\infty]^{\rm op},+,0)$, such spaces are introduced in Lowen \cite{RL89} under the name  \emph{approach spaces}. We agree by convention that for  Lawvere's quantale $\sQ=([0,\infty]^{\rm op},+,0)$, we  speak of approach spaces, instead of $\sQ$-approach spaces or $[0,\infty]$-approach spaces. \end{rem}

A map $f\colon (X,\delta_X)\lra (Y,\delta_Y)$ between $\sQ$-approach spaces is \emph{continuous} if for all $x\in X$ and $A\subseteq X$,
$$\delta_X(x,A)\leq\delta_Y(f(x),f(A)).$$
$\sQ$-approach spaces and continuous maps constitute a category
$$\sQ\sf\text{-} App.$$   
It is shown in \cite{LT17b} that if the underlying lattice of $\sQ$ has enough coprimes, then $\sQ\sf\text{-} App$ is  topological  over the category of sets in the sense of \cite{AHS}; in particular, it is complete and cocomplete.

\begin{exmp} \label{the space K} Let $\sQ$ be a linearly ordered quantale. Then the map \[\delta_{\mathbb{K}}\colon\sQ\times2^\sQ\lra\sQ, \quad
\delta_{\mathbb{K}}(x,A)=
\begin{cases}
\inf A \rightarrow x, & A\neq\varnothing,\\
0, & A=\varnothing
\end{cases}\]
is a $\sQ$-approach structure on $\sQ$. The space $$\mathbb{K}\coloneqq(\sQ,\delta_{\mathbb{K}})$$  plays an important role in this paper. When $\sQ$ is Lawvere's quantale $([0,\infty]^{\rm op}, +,0)$, this space is precisely the approach space $\mathbb{P}$ introduced by Lowen (see e.g. \cite[Examples 1.8.33\thinspace(2)]{RL97}).
\end{exmp}

\begin{prop}The space  $\mathbb{K}$ is an initially dense object in the category $\sQ\text{-}\sf App$.
\end{prop}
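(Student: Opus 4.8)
The plan is to produce, for an arbitrary $\sQ$-approach space $(X,\delta)$, an explicit initial source from $X$ into copies of $\mathbb{K}$, namely the family of \emph{distance functionals} $(f_A\colon X\lra\mathbb{K})_{A\in 2^X}$ given by $f_A(x)=\delta(x,A)$; equivalently, one shows that the evaluation map $e\colon X\lra\mathbb{K}^{2^X}$, $x\mapsto(\delta(x,A))_{A\in 2^X}$, is an initial morphism. Since $\sQ\text{-}\sf App$ is topological over $\sf Set$ by \cite{LT17b}, it suffices, using the standard characterization of initial sources in a topological category \cite{AHS}, to check two things: (a) every $f_A$ is continuous, and (b) the source $(f_A)_A$ has the universal property — for every $\sQ$-approach space $(Z,\delta_Z)$ and every map $g\colon Z\lra X$, if $f_A\circ g$ is continuous for all $A$, then $g$ is continuous. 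The advantage of this route is that it avoids writing down the (finite-cover-laden) formula for the initial approach structure on a power of $\mathbb{K}$.

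For (a), fix $A\in 2^X$. If $B=\varnothing$ both sides of the continuity inequality vanish. If $B\neq\varnothing$, then since $\sQ$ is linearly ordered the value $\delta_\mathbb{K}(f_A(x),f_A(B))$ equals $\big(\bw_{b\in B}\delta(b,A)\big)\ra\delta(x,A)$, so by the adjunction $p\with q\le r\iff q\le p\ra r$ the required inequality $\delta(x,B)\le\delta_\mathbb{K}(f_A(x),f_A(B))$ is precisely axiom (A4). Thus each $f_A$ is continuous — this is the only place (A4) is used.

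For (b), suppose $g\colon Z\lra X$ is such that every composite $f_A\circ g\colon Z\lra\mathbb{K}$ is continuous. Fix $z\in Z$ and $C\subseteq Z$; if $C=\varnothing$ then $\delta_Z(z,C)=0$ and there is nothing to prove, so assume $C\neq\varnothing$. Continuity of $f_A\circ g$, after applying the adjunction as above, reads $\delta_Z(z,C)\with\bw_{c\in C}\delta(g(c),A)\le\delta(g(z),A)$ for every $A\subseteq X$. Now specialize to $A=g(C)$: for each $c\in C$ we have $\{g(c)\}\subseteq g(C)$, so $\delta(g(c),g(C))\ge\delta(g(c),\{g(c)\})\ge k$ by (A3) and (A1); hence $\bw_{c\in C}\delta(g(c),g(C))\ge k$, and the inequality collapses to $\delta_Z(z,C)\le\delta(g(z),g(C))$, i.e. $g$ is continuous. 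Therefore $(f_A)_A$ is an initial source landing in $\mathbb{K}$, and $\mathbb{K}$ is initially dense in $\sQ\text{-}\sf App$.

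I do not expect a serious obstacle; the only point requiring care is conceptual rather than computational — resisting the urge to compute the initial structure of $\mathbb{K}^{2^X}$ explicitly, and instead invoking the universal property of initial sources directly, together with the observation that (A4) is exactly the continuity of the distance functionals. A minor bookkeeping point is that the hypothesis that $\sQ$ is linearly ordered is what makes $\mathbb{K}$ available and makes the infima $\bw_{b\in B}\delta(b,A)$ appearing in $\delta_\mathbb{K}$ match the meets used in the computation.
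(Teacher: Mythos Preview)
Your proof is correct and follows essentially the same approach as the paper: both use the source $\{\delta(-,A)\colon (X,\delta)\to\mathbb{K}\}_{A\subseteq X}$, observe that (A4) is exactly the continuity of each $\delta(-,A)$, and conclude that this source is initial. The paper leaves the initiality as ``readily seen'', while you spell it out via the universal property by specializing to $A=g(C)$; this is a welcome elaboration rather than a different method.
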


\begin{proof}For each $\sQ$-approach space $(X,\delta)$ and each subset $A$ of $X$, it follows   from (A4) that $\delta(-,A):(X,\delta)\lra (\sQ,\delta_\mathbb{K})$ is a continuous map. Then, it is readily seen that \[\{(X,\delta)\to^{\delta(-,A)} (\sQ,\delta_\mathbb{K})\}_{A\subseteq X}\] is an initial source.\end{proof}

As one might expect, the relation between $\sQ$-ordered sets and $\sQ$-approach spaces is analogous to that between ordered sets and topological spaces.
For each $\sQ$-approach space $(X,\delta)$, the $\sQ$-relation \[\Omega(\delta)\colon X\times X\lra\sQ, \quad \Omega(\delta)(x,y)=\delta(x,\{y\})\]   is a $\sQ$-order on $X$, called the \emph{specialization $\sQ$-order} of $(X,\delta)$.
Assigning to each $\sQ$-approach space its  specialization $\sQ$-order gives rise to a functor $$\Omega\colon\sQ\text{-}{\sf App}\lra\sQ\text{-}{\sf Ord}.$$  This functor has a left adjoint $$\Gamma\colon\sQ\text{-}{\sf Ord}\lra\sQ\text{-}{\sf App}$$ that sends each $\sQ$-ordered set $(X,\alpha)$ to the $\sQ$-approach space $(X,\Gamma(\alpha))$ given by
\[
\Gamma(\alpha)(x,A)=\bv\limits_{a\in A} \alpha(x,a).
\]
The functor $\Gamma$ is   full and faithful.

The adjunction $\Gamma\dashv\Omega$ is  an analogy in the $\sQ$-valued context of that between the categories of ordered sets and topological spaces mentioned in the introduction. In the classic case, the left adjoint  sends every ordered set to its Alexandroff topology and the right adjoint sends each topological space to its specialization order.

Now we postulate a class of $\sQ$-approach spaces making use of the specialization $\sQ$-order. These spaces are an extension of $T_0$ topological spaces.

\begin{defn}A $\sQ$-approach space $(X,\delta)$  is  separated if its specialization $\sQ$-order $\Omega(\delta)$  is separated.\end{defn}

Given a $\sQ$-approach space $(X,\delta)$ and a subset $Y$ of $X$,   restricting the domain of $\delta$ to $Y\times 2^Y$ yields a $\sQ$-approach structure on $Y$, the resulting space is called a subspace of $(X,\delta)$.

A $\sQ$-approach space $(Z,\delta_Z)$ is   \emph{injective} if for each $\sQ$-approach space $(X,\delta_X)$ and each continuous map $f$ from a subspace $(Y,\delta_Y)$ of $(X,\delta_X)$ to $(Z,\delta_Z)$, there is a continuous map $\overline{f}:(X,\delta_X)\lra(Z,\delta_Z)$ that extends $f$. 
The main question of this paper is:

\begin{ques}
Are the separated and injective $\sQ$-approach spaces precisely continuous lattices enriched over $\sQ$ as in the classical situation?  \end{ques}

As we shall see in the case that $\sQ$ is the interval $[0,1]$ together with a continuous t-norm, the answer depends on the structure of the continuous t-norm which plays the role of the connective \emph{conjunction} in fuzzy logic.

\section{Relation to $\sQ$-cotopological spaces}

Approach spaces (i.e., $\sQ$-approach spaces when $\sQ=([0,\infty]^{\rm op},+,0)$) have many equivalent descriptions \cite{RL97,Lowen15}, one of them is the description by \emph{lower regular functions}. In this section, we show that there is a similar description of $\sQ$-approach spaces in the circumstance that $\sQ$ is the interval $[0,1]$ endowed with a continuous t-norm; precisely, for such a quantale the category of $\sQ$-approach spaces is isomorphic to that of strong $\sQ$-cotopological spaces.

A notation first.
Let $X$ be a set and let $A$ be a subset of $X$. For each $p\in\sQ$, we write $p_A$ for the map $X\lra\sQ$ given by \[ p_A(x)= \begin{cases} p, & x\in A,\\ 0, & x\notin A. \end{cases} \]

By a \emph{$\sQ$-cotopology}  on a set $X$ we mean a subset $\tau$ of $\sQ^X$ subject to the following conditions:
\begin{enumerate}[label=\rm(C\arabic*)] \setlength{\itemsep}{0pt}
\item  $0_X\in\tau$;
\item  $\phi\vee\psi\in\tau$ for all $\phi, \psi\in\tau$;
	\item  $\bigwedge_{i}\phi_i\in\tau$ for each subset $\{\phi_i\}$ of $\tau$; \item $p\rightarrow\phi\in\tau$ for all $p\in\sQ$ and $\phi\in\tau$.\end{enumerate} The pair $(X,\tau)$ is called a  $\sQ$-cotopological space; elements of $\tau$ are called closed sets.

Let $(X,\tau)$ be a $\sQ$-cotopological space. The closure operator of $(X,\tau)$ is the map
$(-)^-\colon\sQ^X\lra\sQ^X$ given by
\[\overline{\mu}=\bw\{\phi\in\tau\mid\mu\leq\phi\}.\]  The closure operator  satisfies the following conditions: for all $\lambda,\mu\in\sQ^X$,
\begin{enumerate}[label=\rm(cl\arabic*)] \setlength{\itemsep}{0pt}
\item  $\overline{0_X}=0_X$;
\item  $ \overline{\mu} \geq \mu$;
\item
$\overline{\lambda\vee\mu}=\overline{\lambda}\vee\overline{\mu}$;
\item $\overline{\overline{\mu}}=\overline{\mu}$; \item \(\sub_X(\lambda,\mu)\leq\sub_X(\overline{\lambda},\overline{\mu})\) for all $\lambda,\mu\in\sQ^X$.
\end{enumerate}
Actually,  $\sQ$-cotopologies on $X$ correspond bijectively to operators $\sQ^X\lra\sQ^X$ satisfying (cl1)-(cl5).





A map $f\colon (X,\tau_X)\lra (Y,\tau_Y)$ between $\sQ$-cotopological spaces is continuous if $\lambda\circ f\in\tau_X$ for all $\lambda\in\tau_Y$. $\sQ$-cotopological spaces and continuous maps constitute a topological category
$$\sf \sQ\text{-}CTop.$$

Given a $\sQ$-cotopological space $(X,\tau)$, the map \[\zeta(\tau)\colon X\times 2^X\lra\sQ, \quad \zeta(\tau)(x,A)=\overline{k_A}(x)\] is a  $\sQ$-approach structure on $X$. The assignment $(X,\tau)\mapsto(X,\zeta(\tau))$ gives rise to a functor \[\zeta\colon\sQ\text{-}{\sf CTop}\lra\sQ\text{-}{\sf App}.\]

The \emph{specialization $\sQ$-order} $\Omega(\tau)$ of a $\sQ$-cotopological space $(X,\tau)$ is defined to be the specialization $\sQ$-order of the $\sQ$-approach space $\zeta(X,\tau)$; that is,  $\Omega(\tau)(x,y)= \overline{k_y}(x)$. We say that $(X,\tau)$ is a $T_0$ space if $\Omega(\tau)$ is separated, or equivalently, if $\zeta(X,\tau)$ is separated.

A $\sQ$-cotopology on a set $X$ is essentially a map $$\tau\colon X\times \sQ^X\lra\sQ$$ subject to the following conditions: \begin{enumerate}[label=(\roman*)] \setlength{\itemsep}{0pt}
	\item  $\tau(x,\lambda)\geq \lambda(x)$;
	\item  $\tau(x,0_X)=0$;
	\item  $\tau(x,\lambda\vee\mu)= \tau(x,\lambda)\vee\tau(x,\mu)$;
	\item  $\tau(x,\lambda)\geq\tau(x,\mu)\with \sub_X(\mu,\tau(-,\lambda))$.
\end{enumerate}
The $\sQ$-approach space  $\zeta(X,\tau)$  is obtained by restricting the domain of   $\tau\colon X\times \sQ^X\lra\sQ$ to $X\times 2^X$. Thus, the  functors \[\sQ\text{-}\sf CTop\to^\zeta \sQ\text{-}App \to^\Omega\sQ\text{-}Ord\] are obtained by composing the corresponding structure maps  respectively with the embeddings  \[X\times X\to^{(x,y)\mapsto(x,\{y\})}X\times 2^X\to^{(x,A)\mapsto(x,k_A)} X\times\sQ^X.\]  Conversely, to make a $\sQ$-approach space into a $\sQ$-cotopological space, we need to   extend  the domain of $\delta\colon X\times 2^X\lra \sQ$ to $X\times \sQ^X$. It seems hard to do so for a general quantale, however, there is a natural way to do so  when  $\sQ$ is   linearly ordered, as we now see.

\begin{lem}Let $\sQ$ be a linearly ordered quantale. Then for each $\sQ$-approach space $(X,\delta)$, the set $\kappa(\delta)$ of all continuous maps $(X,\delta)\lra \mathbb{K}$ is a $\sQ$-cotopology on $X$. \end{lem}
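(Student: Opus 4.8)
The plan is to first put the membership condition for $\kappa(\delta)$ in a workable form, and then verify the four axioms (C1)--(C4) one at a time. Unravelling the definition of continuity into $\mathbb{K}$ and applying the adjunction $p\with q\leq r\iff q\leq p\ra r$, a map $f\colon X\lra\sQ$ belongs to $\kappa(\delta)$ if and only if
\[\delta(x,A)\with\bw_{a\in A}f(a)\leq f(x)\]
for every $x\in X$ and every \emph{nonempty} $A\subseteq X$ --- the case $A=\varnothing$ imposes nothing, since $\delta(x,\varnothing)=0=\delta_{\mathbb{K}}(f(x),\varnothing)$. Every verification below is carried out against this inequality, with $A$ tacitly nonempty.

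Axioms (C1), (C3), (C4) reduce to formal play with $\with$ and $\ra$. For (C1): $\delta(x,A)\with\bw_{a\in A}0_X(a)=\delta(x,A)\with 0=0=0_X(x)$. For (C3), given $\{\phi_i\}_i\subseteq\kappa(\delta)$ with $\phi\coloneqq\bw_i\phi_i$: since $\phi\leq\phi_j$ pointwise, $\delta(x,A)\with\bw_{a\in A}\phi(a)\leq\delta(x,A)\with\bw_{a\in A}\phi_j(a)\leq\phi_j(x)$ for each $j$, and taking the meet over $j$ gives the claim; for the empty family this reads $\delta(x,A)\with\bw_{a\in A}1=\delta(x,A)\leq1=1_X(x)$. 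For (C4), given $p\in\sQ$ and $\phi\in\kappa(\delta)$: from $p\with(p\ra\phi(a))\leq\phi(a)$ we get $p\with\bw_{a\in A}(p\ra\phi(a))\leq\bw_{a\in A}\phi(a)$, hence
\[\delta(x,A)\with\bw_{a\in A}(p\ra\phi(a))\with p\leq\delta(x,A)\with\bw_{a\in A}\phi(a)\leq\phi(x),\]
and one more use of the adjunction gives $\delta(x,A)\with\bw_{a\in A}(p\ra\phi(a))\leq p\ra\phi(x)$, i.e.\ $p\ra\phi\in\kappa(\delta)$.

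The sole point where linear orderedness of $\sQ$ is used --- and the heart of the proof --- is (C2). Given $\phi,\psi\in\kappa(\delta)$, I would use linearity of $\sQ$ to partition $A=I\cup J$ with $I=\{a\in A\mid\psi(a)\leq\phi(a)\}$ and $J=\{a\in A\mid\phi(a)<\psi(a)\}$, so that $(\phi\vee\psi)|_I=\phi|_I$ and $(\phi\vee\psi)|_J=\psi|_J$; writing $u=\bw_{a\in A}(\phi\vee\psi)(a)=\big(\bw_{a\in I}\phi(a)\big)\wedge\big(\bw_{a\in J}\psi(a)\big)$, we have $u\leq\bw_{a\in I}\phi(a)$ and $u\leq\bw_{a\in J}\psi(a)$. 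When $I$ and $J$ are both nonempty, (A3) gives $\delta(x,A)=\delta(x,I)\vee\delta(x,J)$, so distributivity of $\with$ over joins yields
\[\delta(x,A)\with u=\big(\delta(x,I)\with u\big)\vee\big(\delta(x,J)\with u\big)\leq\phi(x)\vee\psi(x)=(\phi\vee\psi)(x),\]
where $\delta(x,I)\with u\leq\delta(x,I)\with\bw_{a\in I}\phi(a)\leq\phi(x)$ because $\phi\in\kappa(\delta)$ and $I\neq\varnothing$, and symmetrically $\delta(x,J)\with u\leq\psi(x)$. If $J=\varnothing$ then $u=\bw_{a\in A}\phi(a)$ and $\delta(x,A)\with u\leq\phi(x)\leq(\phi\vee\psi)(x)$ at once; the case $I=\varnothing$ is symmetric. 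Thus $\phi\vee\psi\in\kappa(\delta)$. The one subtlety demanding attention is precisely this bookkeeping: $\phi\in\kappa(\delta)$ may be invoked only on a \emph{nonempty} piece of the partition, so the degenerate cases $I=\varnothing$ and $J=\varnothing$ must be peeled off separately; with that done, the argument is routine.
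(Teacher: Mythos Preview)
Your proof is correct and follows essentially the same route as the paper's: both dispatch (C4) via the adjunction calculus and handle (C2) --- the only place linearity enters --- by splitting $A$ into the subsets on which $\phi$ (resp.\ $\psi$) dominates and then applying (A3). The only cosmetic differences are that you work throughout with the adjoint form $\delta(x,A)\with\bigwedge_{a\in A}f(a)\leq f(x)$ rather than $\delta(x,A)\leq\delta_{\mathbb{K}}(f(x),f(A))$, and that you make the degenerate cases $I=\varnothing$, $J=\varnothing$ explicit whereas the paper's $\delta_{\mathbb{K}}$-formulation absorbs them silently.
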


\begin{proof} We leave it to the reader to check that  $\kappa(\delta)$ satisfies the conditions (C1) and (C3). Here we check that  it satisfies the conditions (C2) and (C4).

Let $\lambda\colon (X,\delta)\lra \mathbb{K}$ be a continuous map and let $p\in\sQ$. Then for each $x\in X$ and each $A\subseteq X$, if $A$ is empty, it holds trivially that \[\delta(x,A)\leq\delta_\mathbb{K}((p\ra\lambda)(x),(p\ra\lambda)(A));\]  if $A\not=\varnothing$, then \begin{align*}\delta(x,A) &\leq \inf\lambda(A) \ra\lambda(x)\\ &\leq (p\ra\inf\lambda(A))\ra(p\ra\lambda(x))\\ &= \delta_\mathbb{K}((p\ra\lambda)(x),(p\ra\lambda)(A)).\end{align*} Thus, $p\ra\lambda\colon (X,\delta)\lra \mathbb{K}$ is a continuous map and $\kappa(\delta)$ satisfies (C4).

To show that $\kappa(\delta)$ satisfies (C2), we make use of the  assumption that $\sQ$ is linearly ordered. Assume that $\lambda$ and $\mu$ are  continuous maps from $(X,\delta)$ to $(\sQ,\delta_\mathbb{K})$. Let $x\in X$ and $A\subseteq X$. We need to show that \[\delta(x,A)\leq \delta_\mathbb{K}(\lambda(x)\vee\mu(x),(\lambda \vee\mu)(A)).\] Let \[B=\{a\in A\mid \mu(a)\leq\lambda(a)\}; \quad C= \{a\in A\mid \lambda(a)\leq\mu(a)\}.\] Since $\sQ$ is linearly ordered, it follows that $A=B\cup C$, thus, \begin{align*}\delta(x,A)&= \delta(x,B)\vee\delta(x,C) \\ &\leq \delta_\mathbb{K}(\lambda(x), \lambda (B))\vee \delta_\mathbb{K}(\mu(x), \mu (C)) \\  &\leq  \delta_\mathbb{K}(\lambda(x)\vee\mu(x), \lambda (B))\vee \delta_\mathbb{K}(\lambda(x)\vee\mu(x), \mu (C)) \\ &= \delta_\mathbb{K}(\lambda(x)\vee\mu(x),(\lambda \vee\mu)(A)), \end{align*}which completes the proof. \end{proof}

If $\sQ$ is Lawvere's quantale $([0,\infty]^{\rm op},+,0)$, then  for each $\sQ$-approach space $(X,\delta)$, the set  $\kappa(\delta)$ is precisely that of lower regular functions of $(X,\delta)$ (see e.g. \cite{RL97,Lowen15}).

\begin{exmp}Let $\sQ$ be a linearly ordered quantale; let  $(X,\alpha)$ be a $\sQ$-ordered set. Since for each map $\lambda\colon X\lra\sQ$, we have \begin{align*}&\quad\quad\quad \lambda\colon  (X,\Gamma(\alpha))\lra \mathbb{K}~\text{is continuous}\\ &\iff \forall x\in X,\forall A\subseteq X, \Gamma(\alpha)(x,A)\leq \delta_\mathbb{K}(\lambda(x),\lambda(A))\\ &\iff \forall x\in X,\forall A\not=\varnothing, {\textstyle\bv_{a\in A}X(x,a)\leq \inf\lambda(A)}\ra\lambda(x)\\   &\iff \forall x,a\in X, X(x,a)\leq \lambda(a)\ra\lambda(x)\\ &\iff \lambda\colon X\lra(\sQ,\alpha_R)~\text{preserves $\sQ$-order}, \end{align*} it follows that the closed sets of  $\kappa\circ\Gamma(\alpha)$ are exactly the weights of $(X,\alpha)$.
\end{exmp}

\begin{exmp}\label{cotopology of K} Let $\sQ=([0,1],\with,1)$ with $\with$ being a  left continuous t-norm. Then a map $\lambda\colon [0,1]\lra[0,1]$ is a closed set of the $\sQ$-cotopological space $([0,1],\kappa(\delta_\mathbb{K}))$ if and only if  it satisfies: \begin{enumerate}[label=(\roman*)] \setlength{\itemsep}{0pt} \item $y\ra x\leq\lambda(y)\ra\lambda(x)$ for all $x,y\in[0,1]$;   \item $\lambda$ is right continuous. \end{enumerate}

By definition,  we have \begin{align*}\lambda\in \kappa(\delta_\mathbb{K})&\iff \lambda\colon  ([0,1],\delta_\mathbb{K})\lra ([0,1],\delta_\mathbb{K})~\text{is continuous}\\ &\iff \forall x\in X,\forall A\not=\varnothing, \delta_\mathbb{K}(x,A)\leq \delta_\mathbb{K}(\lambda(x),\lambda(A))\\ &\iff \forall x\in X,\forall A\not=\varnothing, \inf A\ra x\leq \inf\lambda(A) \ra\lambda(x). \end{align*}

Suppose that $\lambda\in \kappa(\delta_\mathbb{K})$.  Letting $A=\{y\}$ gives that  $y\ra x\leq\lambda(y)\ra\lambda(x)$, in particular, $\lambda$ is non-decreasing. Letting $x=\inf A$ for a nonempty subset gives that $\lambda(\inf A)=\inf\lambda(A)$, so $\lambda$ is right continuous. This shows every closed set of  $([0,1],\kappa(\delta_\mathbb{K}))$ satisfies (i) and (ii).
The converse implication is clear.
\end{exmp}

\begin{prop}Let $\sQ$ be a linearly ordered quantale. Then, the assignment $(X,\delta)\mapsto (X,\kappa(\delta))$ gives rise to a functor
\[\kappa\colon\sQ\text{-}{\sf App}\lra\sQ\text{-}{\sf CTop}.\] Furthermore, $\kappa$ is  left adjoint and right inverse to \[\zeta\colon\sQ\text{-}{\sf CTop}\lra\sQ\text{-}{\sf App}.\] \end{prop}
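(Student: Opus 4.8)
The plan is to verify, in this order: (1) $\kappa$ sends continuous maps to continuous maps, hence is a functor; (2) $\zeta\circ\kappa=\mathrm{id}_{\sQ\text{-}{\sf App}}$; and (3) for every $\sQ$-approach space $(X,\delta)$ and every $\sQ$-cotopological space $(Y,\tau)$, a map $f\colon X\to Y$ is continuous as a map $\kappa(X,\delta)\to(Y,\tau)$ if and only if it is continuous as a map $(X,\delta)\to\zeta(Y,\tau)$. Granting (3), the two hom-sets in question are the same subset of the set of maps $X\to Y$; since $\kappa$ and $\zeta$ leave underlying maps unchanged, the resulting bijection is natural in both variables, and this is precisely the adjunction $\kappa\dashv\zeta$. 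Statement (2) is exactly the claim that $\kappa$ is a right inverse of $\zeta$.

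Step (1) is immediate: if $f\colon(X,\delta_X)\to(Y,\delta_Y)$ is continuous and $\phi\in\kappa(\delta_Y)$ — that is, $\phi\colon(Y,\delta_Y)\to\mathbb{K}$ is continuous — then $\phi\circ f\colon(X,\delta_X)\to\mathbb{K}$ is a composite of continuous maps, so $\phi\circ f\in\kappa(\delta_X)$; thus $f$ is continuous for the cotopologies $\kappa(\delta_X)$ and $\kappa(\delta_Y)$. Preservation of identities and composites is then clear, $\kappa$ not altering underlying maps.

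For step (2), fix $(X,\delta)$, $x\in X$ and $A\subseteq X$; I must show $\overline{k_A}(x)=\delta(x,A)$, where the closure is that of $\kappa(\delta)$. The case $A=\varnothing$ reduces to (cl1) and (A2). For $A\neq\varnothing$: by (A4) the map $\delta(-,A)\colon(X,\delta)\to\mathbb{K}$ is continuous, so $\delta(-,A)\in\kappa(\delta)$; moreover $\delta(a,A)\geq\delta(a,\{a\})\geq k$ for $a\in A$ by (A3) and (A1), so $\delta(-,A)\geq k_A$, whence $\overline{k_A}\leq\delta(-,A)$ and $\overline{k_A}(x)\leq\delta(x,A)$. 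Conversely, let $\phi\in\kappa(\delta)$ with $\phi\geq k_A$; continuity of $\phi$ into $\mathbb{K}$ gives $\delta(x,A)\leq\inf\phi(A)\to\phi(x)$, and since $\phi\geq k_A$ forces $\inf\phi(A)\geq k$ and $k\to r=r$, this yields $\delta(x,A)\leq\phi(x)$. Taking the meet over all such $\phi$ gives $\delta(x,A)\leq\overline{k_A}(x)$, and (2) follows.

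The content of step (3) lies in one direction: assuming $f\colon(X,\delta)\to\zeta(Y,\tau)$ continuous, show $\phi\circ f\in\kappa(\delta)$ for every $\phi\in\tau$. Fix $x\in X$ and a nonempty $B\subseteq X$, and set $p=\inf\phi(f(B))$. The key move is to apply the cotopology axiom (C4): $p\to\phi\in\tau$, and $p\to\phi\geq k_{f(B)}$ since $p\leq\phi(c)$ for each $c\in f(B)$; hence the closure of $k_{f(B)}$ in $\tau$ satisfies $\overline{k_{f(B)}}\leq p\to\phi$, i.e.\ $\overline{k_{f(B)}}(y)\with p\leq\phi(y)$ for all $y$. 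Combining this with the continuity inequality $\delta(x,B)\leq\zeta(\tau)(f(x),f(B))=\overline{k_{f(B)}}(f(x))$ and the monotonicity of $\with$ gives $\delta(x,B)\with p\leq\phi(f(x))$, that is, $\delta(x,B)\leq\delta_\mathbb{K}(\phi(f(x)),\phi(f(B)))$; the empty case being trivial, $\phi\circ f$ is continuous into $\mathbb{K}$. The reverse direction is the lower-bound argument of (2): if $\phi\circ f\in\kappa(\delta)$ for all $\phi\in\tau$, then for $\phi\in\tau$ with $\phi\geq k_{f(B)}$ one has $\delta(x,B)\leq\inf\phi(f(B))\to\phi(f(x))\leq k\to\phi(f(x))=\phi(f(x))$, and the meet over such $\phi$ gives $\delta(x,B)\leq\overline{k_{f(B)}}(f(x))=\zeta(\tau)(f(x),f(B))$. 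The only real obstacle is recognizing that $p\to\phi$ is the closed set to test against the closure of $k_{f(B)}$; after that, everything is routine use of the adjunction $p\with(-)\dashv(p\to-)$ and of $k\to r=r$.
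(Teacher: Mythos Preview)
Your proof is correct and follows essentially the same route as the paper: the identity $\overline{k_A}=\delta(-,A)$ is established via (A4) and the lower-bound argument exactly as in the paper, and the crucial step in the adjunction---using (C4) to produce the closed set $p\to\phi$ majorizing $k_{f(B)}$---is the same key move. You are slightly more explicit than the paper in that you verify functoriality of $\kappa$ and the reverse direction of the hom-set bijection directly, whereas the paper leaves these to follow from $\zeta\circ\kappa=\mathrm{id}$ and the functoriality of $\zeta$; either organization is fine.
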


\begin{proof} First, we prove a useful fact: for each $\sQ$-approach space $(X,\delta)$ and each subset $A$ of $X$, the map \[\delta(-,A)\colon X\lra\sQ, \quad x\mapsto \delta(x,A)\] is a closed set of $(X,\kappa(\delta))$ and it is the closure   of $k_A$ in $(X,\kappa(\delta))$; that is,  $\overline{k_A}=\delta(-,A)$.

On the one hand, it follows   from (A4) that $\delta(-,A):(X,\delta)\lra (\sQ,\delta_\mathbb{K})$ is a continuous map, hence a closed set of  $(X,\kappa(\delta))$. Thus,   $\overline{k_A}\leq\delta(-,A)$. On the other hand, if $\lambda$ is a closed set of  $(X,\kappa(\delta))$ satisfying $k_A\leq\lambda$, then for all $x\in X$, \[\delta(x,A)\leq \delta_\mathbb{K}(\lambda(x), \lambda(A)) = \inf\lambda(A) \ra\lambda(x)\leq \lambda(x),\] hence $\delta(-,A)\leq \lambda$. Therefore, $\overline{k_A}=\delta(-,A)$.

From this fact one immediately infers that for each $\sQ$-approach space $(X,\delta)$, $\zeta\circ\kappa(\delta)=\delta$, hence $\kappa$ is right inverse to $\zeta$. To see that $\kappa$ is left adjoint to $\zeta$, we show that for each $\sQ$-cotopological space $(Y,\tau)$, if $f\colon (X,\delta)\lra(Y,\zeta(\tau))$ is a continuous map  between $\sQ$-approach spaces, then $f\colon (X,\kappa(\delta))\lra(Y, \tau )$ is a continuous map  between $\sQ$-cotopological spaces. That means, for each  closed set $\lambda$ of $(Y,\tau)$,  the map  $\lambda\circ f\colon (X,\delta)\lra (\sQ,\delta_\mathbb{K})$ is   continuous. Let $x$ be an element   and   $A$ be a subset of $X$. Without loss of generality, we assume that $A\not=\varnothing$. Let $p$ be the meet of   \( \lambda\circ f (A)\) in $\sQ$. Then $p\ra\lambda$ is a closed set of $(Y,\tau)$ with $(p\ra\lambda)(f(a))\geq k$ for all $a\in A$, so the closure of $k_{f(A)}$ in $(Y,\tau)$ is contained in $p\ra\lambda$, i.e.,   $\overline{k_{f(A)}}\leq p\ra\lambda$. Therefore, \begin{align*}\delta(x,A)&\leq \zeta(\tau)(f(x),f(A))\\ &= \overline{k_{f(A)}}(f(x))\\ &\leq (p\ra\lambda)(f(x))\\ &= \inf \lambda\circ f (A)\ra \lambda\circ f(x)\\ &=\delta_\mathbb{K}(\lambda\circ f(x), \lambda\circ f (A)),  \end{align*} which shows that $\lambda\circ f\colon (X,\delta)\lra (\sQ,\delta_\mathbb{K})$ is   continuous, as desired. \end{proof}

If the quantale $\sQ$ is the interval $[0,1]$ endowed with a continuous t-norm, there is a nice characterization of the image of the functor $\kappa$, it consists precisely of  the strong $\sQ$-cotopological spaces.
A $\sQ$-cotopology  $\tau$ on a set $X$ is  \emph{strong} \cite{Zhang2007} if it satisfies
\begin{enumerate}
\item[(C5)] $p\with \phi\in\tau$ for all $p\in \sQ$ and $\phi\in\tau$.
\end{enumerate}

\begin{rem}If $\sQ$ is Lawvere's quantale $\sQ=([0,\infty]^{\rm op},+,0)$, then a strong $\sQ$-cotopology on a set is exactly a \emph{lower regular function frame} in the sense of Lowen \cite{RL97,Lowen15}.  \end{rem}

\begin{prop} {\rm (\cite{CLZ2011})} \label{closure:cl6} Let $(X,\tau)$ be a  $\sQ$-cotopological space. Then $(X,\tau)$ is  strong   if and only if its closure operator  satisfies
\begin{enumerate}
\item[\rm(cl6)]   \(\overline{p\with \mu}= p\with \overline{\mu}\) for all $p\in \sQ$ and $\mu\in\sQ^X$. \end{enumerate}
\end{prop}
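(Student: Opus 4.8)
The plan is to derive both implications straight from the formula $\overline{\mu}=\bw\{\phi\in\tau\mid\mu\le\phi\}$, using only the adjunction $p\with(-)\dashv(p\ra-)$ on $\sQ^X$ read pointwise, together with the axioms (C1)--(C4); in particular no property of the continuous t-norm beyond distributivity of $\with$ over joins will be needed, so the statement is really about arbitrary $\sQ$-cotopologies. Two preliminary observations make everything run smoothly: (a) $\overline{\mu}$ always lies in $\tau$, since it is a meet of members of $\tau$ and $\tau$ is closed under arbitrary meets by (C3); (b) for $\phi\in\tau$ one has $\overline{\phi}=\phi$, because $\overline{\phi}\ge\phi$ by (cl2) while $\phi$ itself competes in the meet defining $\overline{\phi}$, so $\overline{\phi}\le\phi$.

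For the direction ``(cl6) $\Rightarrow$ strong'', I would take $p\in\sQ$ and $\phi\in\tau$; by (b) and (cl6), $\overline{p\with\phi}=p\with\overline{\phi}=p\with\phi$, and by (a) the left-hand side lies in $\tau$, hence $p\with\phi\in\tau$, which is exactly (C5). For the direction ``strong $\Rightarrow$ (cl6)'' I would prove the two inequalities separately. The inequality $p\with\overline{\mu}\le\overline{p\with\mu}$ holds for \emph{any} $\sQ$-cotopology: by the adjoint property it is equivalent to $\overline{\mu}\le(p\ra\overline{p\with\mu})$, and since $p\ra\overline{p\with\mu}\in\tau$ by (C4), it suffices to check $\mu\le(p\ra\overline{p\with\mu})$, i.e. $p\with\mu\le\overline{p\with\mu}$, which is (cl2). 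Conversely, $\overline{p\with\mu}\le p\with\overline{\mu}$ is where strongness is used: from $\mu\le\overline{\mu}$ and monotonicity of $\with$ we get $p\with\mu\le p\with\overline{\mu}$, and $p\with\overline{\mu}\in\tau$ because $\overline{\mu}\in\tau$ and $\tau$ satisfies (C5); being a closed set above $p\with\mu$, it dominates the least such, namely $\overline{p\with\mu}$.

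I do not expect a genuine obstacle here: the whole argument is just the adjoint property $p\with q\le r\iff q\le(p\ra r)$ applied pointwise in $\sQ^X$, and the only care needed is bookkeeping of which axiom supplies which step --- (cl2) for the automatic half, (C3) to keep closures inside $\tau$, (C4) for the adjunction step, and (C5) for the single nontrivial inequality. As a byproduct the argument shows that strongness is equivalent to the one-sided condition $\overline{p\with\mu}\le p\with\overline{\mu}$ for all $p$ and $\mu$, since the reverse inequality is always valid.
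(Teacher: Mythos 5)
Your proof is correct, and since the paper states this proposition only with a citation to \cite{CLZ2011} and gives no proof of its own, there is nothing to compare it against; your argument --- using the adjunction $p\with(-)\dashv(p\ra-)$, axiom (C4) for the automatic inequality $p\with\overline{\mu}\leq\overline{p\with\mu}$, and (C5) for the converse --- is the standard one and each step checks out. The byproduct you note, that strongness is equivalent to the single inequality $\overline{p\with\mu}\leq p\with\overline{\mu}$, is also valid.
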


If $\sQ$ is the interval $[0,1]$ endowed with a  continuous t-norm,  strong  $\sQ$-cotopological spaces  are, in essence,  fuzzy $T$-neighborhood spaces in \cite{Morsi1995,HM2002}. In particular, if $\sQ$ is the interval $[0,1]$ endowed with the G\"{o}del t-norm, i.e., $\sQ=([0,1],\wedge,1)$,   then a $\sQ$-cotopological space is strong if and only if it is a fuzzy neighborhood space in the sense of Lowen \cite{Lowen82}.

\begin{prop}\label{[0,1]-app = strong} Let $\sQ=([0,1],\with,1)$ with $\with$ being a  continuous t-norm.
Then for each $\sQ$-approach space $(X,\delta)$, $\kappa(X,\delta)$ is a strong $\sQ$-cotopological space. Furthermore, every strong $\sQ$-cotopological space arises in this way; explicitly,  for each strong $\sQ$-cotopological space $(X,\tau)$, $\tau=\kappa\circ\zeta(\tau)$.
Therefore,  the functor $\zeta$ restricts to an isomorphism between the subcategory consisting of strong $\sQ$-cotopological spaces and the category of $\sQ$-approach spaces, with $\kappa$ being its inverse.
\end{prop}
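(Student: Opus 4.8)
The plan is to prove the two inclusions ``$\kappa(X,\delta)$ is strong'' and ``$\tau=\kappa\circ\zeta(\tau)$ for strong $\tau$'', and then read off the isomorphism of categories formally.

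First I would check that $\kappa(X,\delta)$ satisfies (C5). A closed set of $\kappa(\delta)$ is a continuous map $\lambda\colon(X,\delta)\lra\mathbb K$, equivalently a map with $\delta(x,A)\with\inf\lambda(A)\le\lambda(x)$ for all $x\in X$ and all nonempty $A\subseteq X$; given $p\in[0,1]$ I want the same for $p\with\lambda$. Since $\with$ is continuous, $p\with(-)\colon[0,1]\lra[0,1]$ is a monotone continuous self-map of $[0,1]$, hence preserves arbitrary infima, so $\inf\big((p\with\lambda)(A)\big)=p\with\inf\lambda(A)$; therefore
$$\delta(x,A)\with\inf\big((p\with\lambda)(A)\big)=p\with\big(\delta(x,A)\with\inf\lambda(A)\big)\le p\with\lambda(x)=(p\with\lambda)(x),$$
that is, $\delta(x,A)\le\delta_{\mathbb K}\big((p\with\lambda)(x),(p\with\lambda)(A)\big)$, so $p\with\lambda\in\kappa(\delta)$ and $\kappa(\delta)$ is strong. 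It is precisely here, and not in the earlier proof that $\kappa(\delta)$ is a $\sQ$-cotopology, that the continuity of $\with$ — as opposed to mere left continuity — is used.

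Now fix a strong $\sQ$-cotopology $\tau$ on $X$ and put $\delta\coloneqq\zeta(\tau)$, so $\delta(-,A)=\overline{k_A}$. The inclusion $\tau\subseteq\kappa\circ\zeta(\tau)$ is easy: if $\lambda\in\tau$ and $\varnothing\ne A\subseteq X$ then $p\with k_A\le\lambda$ for $p\coloneqq\inf\lambda(A)$, so since $\tau$ is strong (cl6) gives $p\with\overline{k_A}=\overline{p\with k_A}\le\overline\lambda=\lambda$, i.e.\ $\delta(x,A)\with\inf\lambda(A)\le\lambda(x)$ for all $x$ — the continuity of $\lambda\colon(X,\delta)\lra\mathbb K$. (In fact this inclusion holds for every $\sQ$-cotopology, being the counit of the adjunction $\kappa\dashv\zeta$.) For the converse I would work with the operator
$$\cl_\delta(\mu)(x)\coloneqq\bv_{\varnothing\ne A\subseteq X}\delta(x,A)\with\bw_{a\in A}\mu(a),\qquad\mu\in[0,1]^X.$$
One checks routinely: $\mu\le\cl_\delta(\mu)$ (from (A1)); $\cl_\delta(\mu)\le\overline\mu$ (every $\tau$-closed $\phi\ge\mu$ lies in $\kappa(\delta)$ by the inclusion above, hence $\delta(x,A)\with\bw_{a\in A}\mu(a)\le\delta(x,A)\with\bw_{a\in A}\phi(a)\le\phi(x)$); $\cl_\delta$ is monotone; and $\cl_\delta(\mu)=\mu$ iff $\mu\in\kappa(\delta)$, since that is just the continuity condition rewritten. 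Crucially, if $\mu$ takes only finitely many values, say $0\le p_1<\dots<p_m$ with level sets $A_i=\{a:\mu(a)\ge p_i\}$, then $\mu=\bv_{i=1}^m p_i\with k_{A_i}$ is a \emph{finite} join, so (cl3) and (cl6) yield $\overline\mu=\bv_i p_i\with\overline{k_{A_i}}=\cl_\delta(\mu)$; thus $\cl_\delta$ agrees with closure on all finite-valued functions.

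The heart of the matter is the reverse inclusion $\kappa\circ\zeta(\tau)\subseteq\tau$. Take $\lambda\in\kappa\circ\zeta(\tau)=\kappa(\delta)$, so $\cl_\delta(\lambda)=\lambda$, and let $\lambda_n$ be the dyadic truncation of $\lambda$ (each value rounded down to a multiple of $2^{-n}$); then $\lambda_n$ is finite-valued with $0\le\lambda-\lambda_n\le 2^{-n}$, and by monotonicity of $\cl_\delta$ and the previous paragraph $\overline{\lambda_n}=\cl_\delta(\lambda_n)\le\cl_\delta(\lambda)=\lambda$. Hence $\lambda_n\le\overline{\lambda_n}\le\lambda$ with $\overline{\lambda_n}\in\tau$, so $\tau$ contains closed sets approximating $\lambda$ uniformly from below ($0\le\lambda-\overline{\lambda_n}\le 2^{-n}$). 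It remains to deduce $\lambda\in\tau$ from this, and this is the step that genuinely uses the continuity of $\with$ on the compact square $[0,1]^2$; I expect it to be the main difficulty. For the \L ukasiewicz t-norm it is immediate, since $\lambda(x)\le\min\{1,\overline{\lambda_n}(x)+2^{-n}\}=(1-2^{-n})\to\overline{\lambda_n}(x)$, the right-hand side is $\tau$-closed by (C4), and so $\overline\lambda\le(1-2^{-n})\to\overline{\lambda_n}\le\lambda+2^{-n}$ for every $n$, giving $\overline\lambda=\lambda$. For a general continuous t-norm the same strategy applies but needs a more careful execution, still resting on the uniform continuity of $\with$ — through (C4), the non-expansiveness (cl5) of the closure operator, and estimates such as $\sup_{b\in[0,1]}\big((c\to b)-b\big)\to 0$ as $c\to 1^{-}$; this is the obstacle. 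Combining the two inclusions gives $\kappa\circ\zeta(\tau)=\tau$ for all strong $\tau$.

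Finally, the isomorphism of categories is formal. The functor $\kappa$ is available because $[0,1]$ is linearly ordered; by the previous proposition $\zeta\circ\kappa=\id$ on $\sQ\text{-}{\sf App}$; by the first paragraph $\kappa$ takes values in the full subcategory of strong $\sQ$-cotopological spaces; and by the two inclusions $\kappa\circ\zeta=\id$ on that subcategory. As $\zeta$ and $\kappa$ leave underlying sets fixed and transport continuous maps to continuous maps in both directions, $\zeta$ restricts to an isomorphism from the category of strong $\sQ$-cotopological spaces onto $\sQ\text{-}{\sf App}$, with inverse $\kappa$.
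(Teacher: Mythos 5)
Your first paragraph (strongness of $\kappa(\delta)$) and your inclusion $\tau\subseteq\kappa\circ\zeta(\tau)$ coincide with the paper's argument, and your reduction of the converse inclusion to ``$\overline{\lambda_n}\le\lambda$ for the dyadic truncations $\lambda_n$'' is sound --- it is essentially the paper's computation that $p\with\overline{\lambda_{[p]}}\le\lambda$ for every $p$. But the step you yourself flag as ``the obstacle'' is a genuine gap, and it is exactly where the paper's Lemma \ref{closure in a strong top} does the work: having closed sets \emph{below} $\lambda$ that approximate it uniformly is not enough; one needs an upper bound on $\overline{\lambda}$. The paper obtains it from the two-sided sandwich $\bv_{0\le i\le n-1}\frac{i}{n}\with\lambda_{[i/n]}\le\lambda\le\bv_{0\le i\le n-1}\frac{i+1}{n}\with\lambda_{[i/n]}$: applying (cl3) and (cl6) to the right-hand \emph{finite} join gives $\overline{\lambda}\le\bv_{i}\frac{i+1}{n}\with\overline{\lambda_{[i/n]}}$, and uniform continuity of $\with$ bounds this by $\bv_{i}\frac{i}{n}\with\overline{\lambda_{[i/n]}}$ plus a quantity tending to $0$, i.e.\ by $\overline{\lambda_n}$ plus $o(1)\le\lambda+o(1)$; hence $\overline{\lambda}\le\lambda$. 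This is short, but it is not the argument you sketch.

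Moreover, the specific repair you propose --- dominating $\lambda$ by closed sets of the form $c\ra\overline{\lambda_n}$ via (C4), using the estimate $\sup_{b}((c\ra b)-b)\to 0$ as $c\to 1^{-}$ --- does not work for a general continuous t-norm. For the G\"{o}del t-norm, $\lambda\le c\ra\mu$ is equivalent to $\min(c,\lambda(x))\le\mu(x)$ for all $x$, so at any point where $\lambda(x)>\mu(x)$ one is forced to take $c\le\mu(x)$; since $\mu=\overline{\lambda_n}$ may be arbitrarily small at such points, no $c$ bounded away from $0$ (let alone close to $1$) makes $c\ra\overline{\lambda_n}$ an upper bound for $\lambda$, and your {\L}ukasiewicz computation has no analogue. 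So the gap cannot be closed along the lines you indicate; it must be closed by controlling $\overline{\lambda}$ from above through the level-set decomposition of the closure operator (Lemma \ref{closure in a strong top}), after which your argument and the paper's coincide. The categorical wrap-up in your final paragraph is fine.
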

In the case that $\&$ is the product t-norm, Proposition \ref{[0,1]-app = strong} is in effect the characterization of approach spaces by lower regular functions (see e.g. \cite{RL97,Lowen15}).  In order to prove Proposition \ref{[0,1]-app = strong}, we prove a lemma first.
\begin{lem}\label{closure in a strong top} Let $\sQ=([0,1],\with,1)$ with $\with$ being a  continuous t-norm. Then for each strong $\sQ$-cotopological space $(X,\tau)$ and each $\lambda\in[0,1]^X$, the closure of $\lambda$ is given by \[\overline{\lambda}= \bv_{p\in[0,1]}p\with \overline{\lambda_{[p]}},\] where, \[ \lambda_{[p]}\colon X\lra[0,1], \quad \lambda_{[p]}(x)\coloneqq \begin{cases} 1, &\lambda(x)\geq p, \\ 0, & {\rm otherwise}. \end{cases}\]\end{lem}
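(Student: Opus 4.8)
The plan is to verify that the right-hand side, call it $\lambda^\star := \bv_{p\in[0,1]} p\with\overline{\lambda_{[p]}}$, is a closed set of $(X,\tau)$ containing $\lambda$, and then that it is the smallest such, i.e. $\lambda^\star = \overline\lambda$. That $\lambda^\star$ is closed is immediate from the axioms (C5) and (C3): each $p\with\overline{\lambda_{[p]}}$ lies in $\tau$ by strongness since $\overline{\lambda_{[p]}}\in\tau$, and an arbitrary join of closed sets is closed. So the two substantive points are (a) $\lambda\le\lambda^\star$ and (b) $\lambda^\star\le\overline\lambda$; together with closedness of $\lambda^\star$ these give $\overline\lambda\le\lambda^\star\le\overline\lambda$.

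For (a), fix $x\in X$ and set $q=\lambda(x)$. Then $\lambda_{[q]}(x)=1$, so $q\with\overline{\lambda_{[q]}}(x)\ge q\with\lambda_{[q]}(x)=q\with 1=q$ using (cl2) ($\overline{\lambda_{[q]}}\ge\lambda_{[q]}$); hence $\lambda^\star(x)\ge q=\lambda(x)$. The genuinely delicate inequality is (b). For this I would proceed pointwise: fix $x$ and show $p\with\overline{\lambda_{[p]}}(x)\le\overline\lambda(x)$ for every $p$. Since $(X,\tau)$ is strong, Proposition \ref{closure:cl6} gives $p\with\overline{\lambda_{[p]}} = \overline{p\with\lambda_{[p]}}$, so by the monotonicity of closure (cl5)/(cl3) it suffices to check the \emph{crude} pointwise bound $p\with\lambda_{[p]}\le\lambda$ — but this is false in general (when $\lambda(y)<p$ the left side is $0$, fine; when $\lambda(y)\ge p$ the left side is $p$, which can exceed $\lambda(y)$ only if $p>\lambda(y)$, impossible, so in fact $p\with\lambda_{[p]}\le\lambda$ \emph{is} true). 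Hence $\overline{p\with\lambda_{[p]}}\le\overline\lambda$ by (cl5), and taking the join over $p$ yields $\lambda^\star\le\overline\lambda$.

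Thus the argument is short once the two observations $p\with\lambda_{[p]}\le\lambda$ (pointwise, an easy case split on whether $\lambda(y)\ge p$) and Proposition \ref{closure:cl6} are in hand. The main obstacle I anticipate is making sure the join $\bv_{p\in[0,1]}$ interacts correctly with the closure operator in step (a) — specifically, that evaluating the join of closed sets at $x$ really gives the supremum of the values, which is true since joins in $\sQ^X$ are computed pointwise — and, more subtly, confirming that continuity of the t-norm is actually needed (it is not used in the verification above, so I would double-check whether the lemma as stated holds for an arbitrary integral quantale with a strong cotopology, or whether continuity enters only through the ambient hypothesis of Proposition \ref{[0,1]-app = strong}; I expect the proof goes through for any integral quantale and the continuity hypothesis is inherited from the surrounding context rather than essential here). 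If continuity of $\with$ \emph{is} needed, the place it would enter is in ensuring $\bv_{p} p\with\overline{\lambda_{[p]}}$ is genuinely attained or well-behaved under the closure, and I would handle that by approximating $p$ from below and using left-continuity of $\with$ in its first variable.
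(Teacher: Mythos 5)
Your steps (a) and (b) are fine: the pointwise inequality $p\with\lambda_{[p]}\leq\lambda$ together with (cl6) does give $\lambda\leq\lambda^\star\leq\overline{\lambda}$. But the step you dismiss as ``immediate'' is exactly where the proof breaks: you claim $\lambda^\star=\bigvee_{p}p\with\overline{\lambda_{[p]}}$ is closed because ``an arbitrary join of closed sets is closed.'' That is not an axiom of a $\sQ$-cotopology. (C2) only gives \emph{binary} (hence finite) joins of closed sets, and (C3) is about arbitrary \emph{meets}; already in the classical case $\sQ=\{0,1\}$ an arbitrary union of closed sets need not be closed. Without closedness of $\lambda^\star$ you cannot conclude $\overline{\lambda}\leq\lambda^\star$, and that is the entire content of the hard direction of the lemma. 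Your two correct inequalities only show $\lambda^\star$ sits between $\lambda$ and $\overline{\lambda}$, which is far from the claim.

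The paper closes this gap with a finite approximation plus a limit argument, and this is precisely where continuity of the t-norm enters (your remark that continuity ``is not used'' should have been a warning sign). One sandwiches
\[
\bv_{0\leq i\leq n-1}\tfrac{i}{n}\with\lambda_{[i/n]}\;\leq\;\lambda\;\leq\;\bv_{0\leq i\leq n-1}\tfrac{i+1}{n}\with\lambda_{[i/n]},
\]
where the joins are \emph{finite}, so (cl3) and (cl6) legitimately give
\[
\bv_{0\leq i\leq n-1}\tfrac{i}{n}\with\overline{\lambda_{[i/n]}}\;\leq\;\overline{\lambda}\;\leq\;\bv_{0\leq i\leq n-1}\tfrac{i+1}{n}\with\overline{\lambda_{[i/n]}}.
\]
Both bounds also sandwich $\lambda^\star$, and the uniform continuity of $\with$ forces the two bounds to converge to a common value as $n\to\infty$; hence $\overline{\lambda}=\lambda^\star$. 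To repair your write-up you must replace the closedness claim for $\lambda^\star$ by this (or an equivalent) approximation argument for the inequality $\overline{\lambda}\leq\lambda^\star$.
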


\begin{proof}The conclusion is contained in \cite[Theorem 1.3]{HM2002},  it is essentially   the implication (iii) $\Rightarrow$ (v) therein. A direct proof is included here for the sake of completeness. For each integer $n\geq 2$, since \[\bv_{0\leq i\leq n-1}{ {\frac{i}{n}}\with\lambda_{[i/n]}}\leq \lambda \leq \bv_{0\leq i\leq n-1} {\frac{i+1}{n}}\with\lambda_{[i/n]}, \] it follows from (cl6) in Proposition \ref{closure:cl6}  that  \[\bv_{0\leq i\leq n-1}{ {\frac{i}{n}}\with\overline{\lambda_{[i/n]}}}\leq \overline{\lambda} \leq \bv_{0\leq i\leq n-1} {\frac{i+1}{n}}\with\overline{\lambda_{[i/n]}}. \] By uniform continuity of $\with\colon [0,1]\times[0,1]\lra[0,1]$, we have  \[\lim_{n\ra\infty}\bv_{0\leq i\leq n-1}{ {\frac{i}{n}}\with\overline{\lambda_{[i/n]}}}= \lim_{n\ra\infty} \bv_{0\leq i\leq n-1} {\frac{i+1}{n}}\with\overline{\lambda_{[i/n]}}. \] Therefore,  \[\overline{\lambda}= \bv_{p\in[0,1]}p\with \overline{\lambda_{[p]}},\] as desired.\end{proof}

The argument of Lemma \ref{closure in a strong top} implies that for each closed set $\lambda$ of a  strong $\sQ$-cotopological space $(X,\tau)$, it holds that \begin{equation} \label{subbasis} \lambda= \bw_{n\geq 2}\bv_{0\leq i\leq n-1} {\frac{i+1}{n}}\with\overline{\lambda_{[i/n]}}.\end{equation} So, \(\{\overline{1_A}\mid A\subseteq X\}\) is a subbasis of $\tau$. In other words, $\tau$ is the coarsest strong $\sQ$-cotopology on $X$ that has all $\overline{1_A}$ (closure of $1_A$ in $(X,\tau)$), $A\subseteq X$, as closed sets.

\begin{proof}[Proof of Proposition \ref{[0,1]-app = strong}] First, we show that  for each $\sQ$-approach space $(X,\delta)$, $\kappa(X,\delta)$ is a strong $\sQ$-cotopological space. To this end, we check that  if $\lambda\colon (X,\delta)\lra \mathbb{K}$ is continuous, then so is $p\with\lambda$ for all $p\in[0,1]$. This is easy since for each $x$ and each nonempty subset $A$ of $X$, \begin{align*}\delta(x,A)&\leq \delta_\mathbb{K}(\lambda(x),\lambda(A))\\ &= \inf\lambda(A) \ra\lambda(x) \\
&\leq (p\with\inf\lambda(A)) \ra(p\with\lambda(x))\\ &= \inf(p\with\lambda)(A)  \ra(p\with\lambda)(x) & \text{($\with$ is continuous)}\\ & = \delta_\mathbb{K}((p\with\lambda)(x),(p\with\lambda)(A)).\end{align*}

Next, we show that if $(X,\tau)$ is a strong $\sQ$-cotopological space, then $\tau=\kappa\circ\zeta(\tau)$; that is,    $\lambda\colon X\lra[0,1]$  is  a closed set of $(X,\tau)$ if and only if \[\lambda\colon (X,\zeta(\tau))\lra \mathbb{K}\] is a continuous map.

Assume that $\lambda$ is a closed set of $(X,\tau)$. Let $x$ be a point and $A$ be a nonempty subset of $X$. Set \[p=\inf\lambda(A).\] Then $p_A\leq \lambda$ and, by (cl6) in Proposition \ref{closure:cl6}, \[p\with \overline{1_A}(x) = \overline{p_A}(x)\leq\lambda(x).\] Hence \[\zeta(\tau)(x,A)=\overline{1_A}(x)\leq p\ra\lambda(x)= \delta_\mathbb{K}(\lambda(x),\lambda(A)),\] which shows that $\lambda\colon (X,\zeta(\tau))\lra \mathbb{K}$ is continuous.

Conversely, assume that $\lambda\colon (X,\zeta(\tau))\lra \mathbb{K}$ is continuous. Then for each point $x$  and each nonempty subset $A$ of $X$, \[\overline{1_A}(x)=\zeta(\tau)(x,A)\leq   \delta_\mathbb{K}(\lambda(x),\lambda(A)) = \inf\lambda(A)\ra\lambda(x),\] hence \[ \inf\lambda(A) \with\overline{1_A}(x)\leq\lambda(x). \] For each $p\in[0,1]$, letting $A=\{a\in X\mid \lambda(a)\geq p\}$ gives that  \[p\with \overline{\lambda_{[p]}}\leq\lambda.\] Consequently,  \[\bv_{p\in[0,1]}p\with \overline{\lambda_{[p]}}\leq\lambda,\] which implies that $\lambda$ is a closed set by the above lemma. \end{proof}

\begin{cor}\label{injective}
Let $\with$ be  a  continuous t-norm and let $\sQ=([0,1],\with,1)$. Then a separated   $\sQ$-approach space is injective if and only if it is a retract of some power of the space $\mathbb{K}$.
 \end{cor}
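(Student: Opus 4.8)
The plan is to identify the statement "a separated $\sQ$-approach space is injective iff it is a retract of some power of $\mathbb{K}$" as the $\sQ$-approach analogue of the standard characterization of injective objects in a topological category with an initially dense injective cogenerator. First I would check that $\mathbb{K}$ is itself injective in $\sQ\text{-}\sf App$: this should follow from the isomorphism $\sQ\text{-}\sf App\cong \sf strong\ \sQ\text{-}CTop$ established in Proposition \ref{[0,1]-app = strong}, since injectivity of $\mathbb{K}$ amounts, via $\kappa\circ\zeta$, to an extension property for closed sets (maps into $\mathbb{K}$ are closed sets), and extending a closed set of a subspace to the whole space is exactly the statement that the subspace $\sQ$-cotopology is the restriction of the ambient one — which is true because restriction of a strong $\sQ$-cotopology along an inclusion is again strong and the closure operator restricts. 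Alternatively, one can argue directly: given an embedding $Y\hookrightarrow X$ and continuous $\lambda\colon Y\to\mathbb{K}$, set $\overline{\lambda}(x)=\bw_{A\subseteq Y}(\inf\lambda(A)\ra\delta_X(x,A))$ (or use $\overline{k_A}$-type formulas) and verify continuity via (A4); this is the routine part.

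Next I would record the two standard closure properties. Powers $\mathbb{K}^I$ of an injective object are injective in any category (products of injectives are injective, using that monomorphisms in $\sQ\text{-}\sf App$ are the embeddings and that $\sQ\text{-}\sf App$ is topological over $\sf Set$, so initial sources behave well). And retracts of injectives are injective: if $Z$ is a retract of an injective $W$ via $s\colon Z\to W$, $r\colon W\to Z$ with $rs=\id$, then any $f\colon Y\to Z$ from a subspace extends by first extending $sf\colon Y\to W$ to $\overline{sf}\colon X\to W$ and then composing with $r$. Both arguments are formal and I would state them in one or two sentences each. This gives the "if" direction: every retract of a power of $\mathbb{K}$ is injective, and separatedness is inherited by subspaces/retracts from the separatedness of $\mathbb{K}$ (which holds since $\alpha_L$ is a separated $\sQ$-order and $\Omega(\delta_\mathbb{K})=\alpha_L$ on the linearly ordered $\sQ=[0,1]$).

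For the "only if" direction, let $(Z,\delta_Z)$ be separated and injective. Since $\mathbb{K}$ is initially dense in $\sQ\text{-}\sf App$ (the Proposition just after Example \ref{the space K}), the source $\{\,\delta_Z(-,A)\colon Z\to\mathbb{K}\,\}_{A\subseteq Z}$ is initial, and because $Z$ is separated this source is point-separating, hence it presents $Z$ as a subspace of the power $\mathbb{K}^{2^Z}$ via the (injective, by separation + initiality) map $e\colon Z\to\mathbb{K}^{2^Z}$, $e(z)=(\delta_Z(z,A))_{A}$. Now apply injectivity of $(Z,\delta_Z)$ to the identity $\id_Z\colon Z\to Z$ along the embedding $e\colon Z\hookrightarrow \mathbb{K}^{2^Z}$: there is a continuous $r\colon \mathbb{K}^{2^Z}\to Z$ with $r\circ e=\id_Z$. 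Thus $Z$ is a retract of the power $\mathbb{K}^{2^Z}$, as required.

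The main obstacle I expect is pinning down the injectivity of $\mathbb{K}$ cleanly — everything else is a standard diagram chase once that is in hand. The cleanest route is through Proposition \ref{[0,1]-app = strong}: injectivity of $\mathbb{K}$ in $\sQ\text{-}\sf App$ transports to injectivity of the one-point-generated strong $\sQ$-cotopological space $\zeta^{-1}(\mathbb{K})=([0,1],\kappa(\delta_\mathbb{K}))$ in $\sf strong\ \sQ\text{-}CTop$, and there the extension of a continuous map $Y\to([0,1],\kappa(\delta_\mathbb{K}))$ — i.e. of a closed set $\lambda$ of the subspace $Y$ — to $X$ is handled by taking the closure $\overline{\lambda^\ast}$ in $X$ of any extension $\lambda^\ast$ of $\lambda$ by $0$ off $Y$, then checking $\overline{\lambda^\ast}|_Y=\lambda$ using that the $\sQ$-cotopology of the subspace is the restriction, together with the explicit closure formula of Lemma \ref{closure in a strong top}. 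I would present this verification in a few lines rather than in full detail, citing Lemma \ref{closure in a strong top} and Proposition \ref{closure:cl6} for the computation with $\with$ and $\ra$.
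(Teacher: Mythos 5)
Your proposal is correct and follows essentially the same route as the paper: transfer the problem through the isomorphism $\sQ\text{-}{\sf App}\cong$ strong $\sQ$-cotopological spaces, observe that continuous maps into $\mathbb{K}$ are exactly the closed sets so that injectivity of $\mathbb{K}$ reduces to extending a closed set of a subspace (via the closure of its extension by $0$), and then conclude by the standard argument combining initial density of $\mathbb{K}$, separation, and applying injectivity to the identity along the embedding $e\colon Z\to\mathbb{K}^{2^Z}$. One caveat: your first candidate extension formula $\bw_{A\subseteq Y}\bigl(\inf\lambda(A)\ra\delta_X(x,A)\bigr)$ does not restrict back to $\lambda$ on $Y$ (already for $\sQ=\{0,1\}$ and $\lambda=1_C$ it collapses to $\delta_X(x,\varnothing)=0$), so you must rely on the closure-of-$\lambda^\ast$ route you commit to at the end, which is the correct one; also, the specialization $[0,1]$-order of $\mathbb{K}$ is $\alpha_R$, not $\alpha_L$, though both are separated so this does not affect the argument.
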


\begin{proof} Since the correspondence $(X,\delta)\mapsto(X, \kappa(\delta))$ is an isomorphism between the category of $\sQ$-approach spaces and that of strong $\sQ$-cotopological spaces, it suffices to check that  in the category of strong $\sQ$-cotopological spaces,  a $T_0$ space is injective if and only if it is a retract  of some  power  of $([0,1],\kappa(\delta_\mathbb{K}))$.

Since for each nonempty subset $A$ of $[0,1]$,  \[\delta_\mathbb{K}(-,A)=\inf A\ra {\rm id},\] it follows from equation (\ref{subbasis}) that $\kappa(\delta_\mathbb{K})$ is the  strong $\sQ$-cotopology on $[0,1]$ generated, as a subbasis, by the identity map. Then, by the argument of Proposition \ref{[0,1]-app = strong} one sees that a map $\lambda\colon X\lra[0,1]$ is a closed set of a strong $\sQ$-cotopological space $(X,\tau)$ if and only if $\lambda\colon (X,\tau)\lra([0,1],\kappa(\delta_\mathbb{K}))$ is continuous. With help of this fact, one readily verifies that in the category of strong $\sQ$-cotopological spaces, a $T_0$ space is  injective   if and only if it is a retract of some power of $([0,1],\kappa(\delta_\mathbb{K}))$.
\end{proof}

\begin{rem} An anonymous referee pointed out to us that Corollary  \ref{injective} can also be deduced  from existing results. The idea is sketched as follows:
\begin{enumerate}[label=(\alph*)] \setlength{\itemsep}{0pt}
\item
Since $[0,1]$ is completely distributive,      $[0,1]$-approach structures   can be equivalently described via $[0,1]$-valued ultrafilter convergence (see  \cite[Section 3]{LT17a}). Explicitly, given a $[0,1]$-approach space $(X,\delta)$, the degree that an ultrafilter $\mathfrak{x}$ converges to a point $x$ is $\bw_{A\in \mathfrak{x}}\delta(x,A).$ In particular, for the space $\mathbb{K}$, the degree that an ultrafilter $\mathfrak{x}$ converges to a point $x$ is
\(\bw_{A\in \mathfrak{x}}(\inf A\ra x).\)   \item Let $\sf U$ be the ultrafilter functor on the category of sets. Define $\xi\colon{\sf U}[0,1]\lra[0,1]$ by $$\xi(\mathfrak{x})=\bv_{A\in \mathfrak{x}}\inf A.$$ Then, by \cite[Examples 1.2\thinspace(4)]{HS2011}, for each continuous t-norm  the ultrafilter monad together with the map $\xi$ constitute a \emph{strict topological theory} $\mathcal{T}$ in the sense of \cite{Hof2007,Hof2011}. The $\mathcal{T}$-categories of this topological theory are essentially $[0,1]$-approach spaces; in particular, the space $\mathbb{K}$ is precisely the $\mathcal{T}$-category $([0,1],\hom_\xi)$ in \cite[Section 5]{Hof2007} and \cite[page 288]{Hof2011}.
\item
Since the $\mathcal{T}$-category $([0,1],\hom_\xi)$ is   injective  (see  \cite[page 293]{Hof2011} and \cite[Lemma 4.18]{HT2010}) and   initially dense (see \cite[Corollary 3.4]{HS2011}) in the category of $\mathcal{T}$-categories,   it follows that a separated $[0,1]$-approach space is injective if and only if it is a retract of some power of the space $\mathbb{K}$.  \end{enumerate}
\end{rem}

\begin{SA}
From now on, we always assume that $\sQ$ is the interval $[0,1]$ equipped with a continuous t-norm  $\with$, though some of the results hold for a more general quantale. The reason is that, for such a quantale,  injective and separated $\sQ$-approach spaces are precisely retracts of powers of the space $\mathbb{K}$ by Corollary \ref{injective}. In the subsequent sections we speak of $[0,1]$-approach space, $[0,1]$-order and complete $[0,1]$-lattice, instead of $\sQ$-approach space, $\sQ$-order and complete $\sQ$-lattice, respectively.\end{SA}

\section{Continuous $[0,1]$-lattices}
First, we recall some basic notions in domain theory \cite{Gierz2003}.

Let $X$ be a partially ordered set. Write $\idl X$ for the set of all ideals (= directed lower sets) of $X$ endowed with the inclusion order. Since for each $x\in X$, the lower set $\downarrow\!x$ is an ideal, we obtain a map (also called the Yoneda embedding): \[\sy\colon X\lra\idl X, \quad x\mapsto\thinspace\downarrow\!x.\]
We say that  \begin{enumerate}[label=(\roman*)] \setlength{\itemsep}{0pt}
\item $X$ is a directed complete partially ordered set (a dcpo for short) if the Yoneda embedding  $\sy\colon X\lra\idl X$ has a left adjoint  which necessarily maps each ideal to its join and is  denoted by $\sup\colon\idl X\lra X$. \item $X$ is a domain if $X$ is a dcpo and the left adjoint $\sup\colon\idl X\lra X$ of  $\sy\colon X\lra\idl X$ also has a left adjoint, denoted by $\thda\colon X\lra\idl X$.      In other words, $X$ is a domain if there is a string of adjunctions \[\thda \dashv \sup\dashv\sy\colon X\lra\idl X.\]
\item $X$ is a continuous lattice if it is at the same time a complete lattice and a domain. \end{enumerate}

Given a dcpo $X$ and two elements $x$ and $y$, we say that $x$ is \emph{way below} $y$,  in symbols  $x\ll y$, if for every directed set $D\subseteq X$, $y\leq \bv D$   implies that $x\leq d$ for some $d\in D$. It is known that a dcpo $X$ is a domain if and only if for all $a\in X$, $\thda  a\coloneqq\{x\in X\mid  x\ll a\}$ is an ideal and has $a$ as a join.

A map $f\colon X\lra Y$ between partially ordered sets is \emph{Scott continuous}   if it preserves  directed joins.

Let $X$ be a $[0,1]$-ordered set. A net $\{x_i\}_i$ of  $X$ is   said to be \emph{forward Cauchy} \cite{BvBR1998,FSW,Goubault} if
$$\bigvee_{i}\bigwedge_{i\leq j\leq l} X(x_j,x_l)=1.$$
A weight $\phi$ of $X$ is said to be \emph{forward Cauchy} if $$\phi=\bigvee_i\bigwedge_{i\leq j} X(-,x_j) $$ for some forward Cauchy net $\{x_i\}_i$ of $X$.

Forward Cauchy weights are an extension  of directed lower sets to the realm of $[0,1]$-ordered sets. It should be mentioned that  in the quantale-valued context  there exist different extensions of ``directed lower sets'',   a comparative study of  some of them can be found in  \cite{LZZ2020}.

There is a nice characterization of forward Cauchy weights, which says that such weights are precisely the \emph{inhabited} and \emph{irreducible} ones.

\begin{prop}{\rm (\cite[Theorem 3.13]{LZZ2020})} \label{FC=irr}
Let $X$ be a   $[0,1]$-ordered set. Then a weight $\phi$ of $X$ is forward Cauchy if and only if it satisfies:   \begin{enumerate}[label=\rm(\roman*)] \setlength{\itemsep}{0pt}
	\item $\phi$ is inhabited in the sense that $\bigvee_{x\in X}\phi(x)=1$;  \item $\phi$ is irreducible in the sense that  for all weights $\phi_1,\phi_2$ of $X$,
$$\sub_X(\phi,\phi_1\lor\phi_2)=\sub_X(\phi,\phi_1)\lor\sub_X(\phi,\phi_2). $$
\end{enumerate}
\end{prop}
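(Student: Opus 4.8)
The plan is to prove the two implications separately. The forward implication --- a forward Cauchy weight is inhabited and irreducible --- reduces to one computation, while the converse, producing a forward Cauchy \emph{net} out of an inhabited irreducible weight, is where the real work lies.

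For the forward implication, fix a forward Cauchy net $\{x_i\}_i$ with $\phi=\bigvee_i\bigwedge_{j\geq i}X(-,x_j)$ and put $\psi_i\coloneqq\bigwedge_{j\geq i}\sy(x_j)$, so $\{\psi_i\}$ is a net of weights increasing along the (directed) index set with $\phi=\bigvee_i\psi_i$. Inhabitedness is immediate: the forward Cauchy condition forces $\psi_i(x_i)=\bigwedge_{j\geq i}X(x_i,x_j)$ to become arbitrarily close to $1$, so $\bigvee_x\phi(x)\geq\bigvee_i\phi(x_i)=1$. For irreducibility I would establish, for every weight $\mu$, the ``limit-along-the-net'' formula
\[
\sub_X(\phi,\mu)=\bigwedge_i\bigvee_{j\geq i}\mu(x_j)=\bigvee_i\bigwedge_{j\geq i}\mu(x_j),
\]
i.e.\ $\mu(x_i)$ converges along the net and its limit is $\sub_X(\phi,\mu)$. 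The inequality $\sub_X(\phi,\mu)\geq\limsup_i\mu(x_i)$ follows from $\sub_X(\phi,\mu)=\bigwedge_i\sub_X(\psi_i,\mu)$ (using $(\bigvee_i a_i)\ra b=\bigwedge_i(a_i\ra b)$) together with $\sub_X(\psi_i,\mu)\geq\mu(x_l)$ for $l\geq i$ (from $\psi_i\leq\sy(x_l)$ and Yoneda). The inequality $\sub_X(\phi,\mu)\leq\liminf_i\mu(x_i)$ follows from $\sub_X(\psi_i,\mu)\leq\psi_i(x_i)\ra\mu(x_i)$, the estimate $\psi_i(x_i)\geq 1-\varepsilon$ for $i$ large, and the purely algebraic fact $\bigwedge_{\varepsilon>0}\big((1-\varepsilon)\ra c\big)=1\ra c=c$. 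Granting the formula, irreducibility is just continuity of $\max$: $\sub_X(\phi,\phi_1\vee\phi_2)=\lim_i\big(\phi_1(x_i)\vee\phi_2(x_i)\big)=\lim_i\phi_1(x_i)\vee\lim_i\phi_2(x_i)=\sub_X(\phi,\phi_1)\vee\sub_X(\phi,\phi_2)$.

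For the converse, let $\phi$ be inhabited and irreducible. The key lemma to isolate is a quantitative \emph{directedness} of $\phi$: for all $x_1,\dots,x_m\in X$ and all $s,t<1$ with $\phi(x_r)>s$ for every $r$, there is $z\in X$ with $X(x_r,z)\geq s$ for every $r$ and $\phi(z)\geq t$. I would prove this by contraposition: if no such $z$ existed then, writing $S_r=\{z\mid X(x_r,z)<s\}$, one checks
\[
\phi=\Big(\bigvee_{z\in S_1}\phi(z)\with\sy(z)\Big)\vee\cdots\vee\Big(\bigvee_{z\in S_m}\phi(z)\with\sy(z)\Big)\vee\Big(\bigvee_{\phi(z)<t}\phi(z)\with\sy(z)\Big),
\]
each summand being a weight $\leq\phi$; irreducibility then forces $\phi$ to equal one of the summands, and each alternative is absurd (equality with the $r$-th summand gives $\phi(x_r)\leq s$, equality with the last gives $\phi(y)\leq t$ for all $y$, contradicting inhabitedness). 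Using the lemma one assembles a directed poset $I$ of ``approximating data'' for $\phi$ --- pairs $(x,n)$ with $\phi(x)>c_n$ for a fixed sequence $c_n\uparrow 1$, ordered by $(x,n)\leq(x',n')$ iff $n\leq n'$ and $X(x,x')\geq r_n\with\cdots\with r_{n'-1}$ --- and reads off the net $x_{(x,n)}=x$. The directedness lemma (plus inhabitedness for the base case) supplies the common upper bounds in $I$; one then checks the net is forward Cauchy and that $\bigvee_{(x,n)}\bigwedge_{(x',n')\geq(x,n)}X(-,x')=\phi$, the inequality $\geq$ again coming from the directedness lemma (for each $y$ and $\varepsilon$ it produces a net point $x$ with $X(y,x)>\phi(y)-\varepsilon$ at an arbitrarily high level $n$).

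The main obstacle is the non-idempotence of $\with$. It forces the error sequence $r_1,r_2,\dots<1$ to be chosen so that the tail products $u_n=\bigwedge_{m\geq n}(r_n\with\cdots\with r_m)$ still satisfy $\sup_n u_n=1$ --- so that an infinite chain of approximations multiplies up to something non-trivial, making the transitivity of $\leq$ on $I$ and the forward Cauchy estimate go through. Such sequences exist for every continuous t-norm by a summability argument modelled on the {\L}ukasiewicz and product cases (the G\"odel case being trivial), reduced to these via the structure of the t-norm near its unit, i.e.\ Theorem~\ref{ordinal sum}. The secondary delicate point is purely bookkeeping: presenting the ``bad set'' in the contrapositive of the directedness lemma as an honest finite join of weights so that irreducibility applies.
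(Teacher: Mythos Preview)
The paper does not prove this proposition; it is quoted verbatim from \cite[Theorem~3.13]{LZZ2020} and no argument is given here. There is therefore no in-paper proof to compare your proposal against.

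On its own merits your outline is sound and follows the expected line. The forward direction via the limit formula $\sub_X(\phi,\mu)=\lim_i\mu(x_i)$ is correct; note that the step $\bigwedge_{\varepsilon>0}\big((1-\varepsilon)\ra c\big)=c$ genuinely uses continuity of $\with$ (it amounts to $\lim_{p\to 1^-}p\with q=q$), which is in force throughout the paper. For the converse, the quantitative directedness lemma and its contrapositive proof are clean and are the heart of the matter. The construction of the index poset $I$ out of pairs $(x,n)$ is the natural move; what remains is the bookkeeping of choosing $c_n\uparrow 1$ and $r_n<1$ so that (a) the tail products $r_n\with\cdots\with r_m$ stay large enough to make $\leq$ on $I$ transitive and the net forward Cauchy, yet (b) the thresholds match up so that the directedness lemma actually produces upper bounds in $I$. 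This is fiddly rather than conceptually hard, and your remark about handling non-idempotence via the ordinal-sum decomposition (Theorem~\ref{ordinal sum}) is the right pointer: near $1$ the t-norm is either G\"odel (trivial) or locally product/\L ukasiewicz, and in either case a summable choice such as $r_n=1-2^{-n}$ (transported through the local isomorphism) works.
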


For each $[0,1]$-ordered set $X$, we write \[\mathcal{D}X\] for the $[0,1]$-ordered set consisting of forward Cauchy weights with $[0,1]$-order inherited from $\CP X$. Since for each $[0,1]$-order-preserving map $f\colon X\lra Y$ and each forward Cauchy weight $\phi$ of $X$, $f^\ra(\phi)$ is a forward Cauchy weight of $Y$, the correspondence \[f\colon X\lra Y \quad \mapsto \quad f^\ra\colon\mathcal{D}X\lra \mathcal{D}Y\] defines a subfunctor \[\mathcal{D}\colon [0,1]\text{-}{\sf Ord}\lra [0,1]\text{-}{\sf Ord}\]   of $\mathcal{P}\colon [0,1]\text{-}{\sf Ord}\lra [0,1]\text{-}{\sf Ord}$. Furthermore, it is known (see e.g. \cite{FSW,LZ2020}) that $\mathcal{D}$    gives rise to a submonad \[\mathbb{D}=(\mathcal{D},\sfm,\sy)\] of the monad \[\mathbb{P}=(\mathcal{P},\sfm,\sy).\] That means,  $\mathcal{D}$ satisfies the following requirements: \begin{enumerate}[label=(\roman*)] \setlength{\itemsep}{0pt} \item For each $[0,1]$-ordered set $X$ and each $x\in X$, $\sy_X(x)\in \mathcal{D}X$. So $\sy$ is also a natural transformation from the identity  functor to $\mathcal{D}$. \item $\mathcal{D}$ is closed under multiplication in the sense that for each $[0,1]$-ordered set $X$ and each $\Phi\in \mathcal{D}\mathcal{D} X$, \[\sfm_X\circ(\varepsilon*\varepsilon)_X(\Phi)\in \mathcal{D}X,\] where $\varepsilon$ is the inclusion transformation of $\mathcal{D}$ in $\CP$ and $\varepsilon*\varepsilon$ stands for the horizontal composite of $\varepsilon$ with itself.
So  $\sfm$ induces a natural transformation $\mathcal{D}^2\lra \mathcal{D}$, which is also denoted by $\sfm$. \end{enumerate}

We say that a $[0,1]$-ordered set $X$ is \emph{$\mathcal{D}$-cocomplete} if  every forward Cauchy weight of $X$ has a supremum. It is clear that $X$ is $\mathcal{D}$-cocomplete  if and only if the map $\sy\colon X\lra\mathcal{D}X$ has a left adjoint, denoted by $\sup\colon\mathcal{D}X\lra X$, which necessarily maps each forward Cauchy weight to its supremum.

Bearing in mind that forward Cauchy weights are  an analogy of directed lower sets in the $[0,1]$-enriched context, we say that a $[0,1]$-order-preserving map $f\colon X\lra Y$ is \emph{$[0,1]$-Scott continuous} if $f$ preserves $\mathcal{D}$-suprema in the sense that for each forward Cauchy weight $\phi$ of $X$, $$f(\sup\phi)=\sup f^\ra(\phi)$$ whenever $\sup\phi$ exists.

Write $$[0,1]\text{-}\sf Ord^\uparrow $$  for the category of $[0,1]$-ordered sets and $[0,1]$-Scott continuous maps.
The full subcategory \[[0,1]\text{-}\sf DCPO\] of $[0,1]\text{-}\sf Ord^\uparrow $ composed of separated and $\mathcal{D}$-cocomplete $[0,1]$-ordered sets  is precisely the category of Eilenberg-Moore algebras of the monad \((\mathcal{D},\sfm,\sy),\)   it is an analogue in the $[0,1]$-enriched context of the category {\sf DCPO} of directed complete partially ordered sets and Scott continuous maps.

\begin{rem}\label{Yoneda limits} In the literature, $[0,1]$-Scott continuous maps are also said to be Yoneda continuous. The reason is  as follows.
An element $x$ of a $[0,1]$-ordered set $X$ is called a \emph{Yoneda limit}  of a forward Cauchy net $\{x_i\}_i$ \cite{BvBR1998,FSW,Goubault} if for all $y\in X$,
$$X(x,y)=\bv_i\bw_{j\geq i}X(x_j,y).$$
It is known (see e.g. \cite[Lemma 46]{FSW}) that $x$ is a Yoneda limit of a forward Cauchy net $\{x_i\}_i$  if and only if $x$ is a supremum of the forward Cauchy weight $\bv_i\bw_{j\geq i}X(-,x_j)$. So, a $[0,1]$-order-preserving map is  Yoneda continuous (i.e., preserves Yoneda limits) if and only if it is $[0,1]$-Scott continuous.
\end{rem}

\begin{prop}\label{underlying order of Yoneda} The underlying ordered set of each $\mathcal{D}$-cocomplete $[0,1]$-ordered set is directed complete.\end{prop}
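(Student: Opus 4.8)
The plan is to show directly that every directed subset $D$ of the underlying order $X_0$ of a $\mathcal{D}$-cocomplete $[0,1]$-ordered set $X$ has a join in $X_0$, exhibiting that join as the supremum of a naturally associated forward Cauchy weight. First I would observe that the net $(x_d)_{d\in D}$ given by $x_d=d$, indexed by $D$ with its own order, is forward Cauchy: if $d\le e\le f$ in $D$ then $x_e=e\le f=x_f$ in $X_0$, so $X(x_e,x_f)\ge 1$, whence $\bigwedge_{d\le e\le f}X(x_e,x_f)=1$ for every $d$ and the forward Cauchy condition holds (here one uses that directed subsets are nonempty). Its associated weight $\phi=\bigvee_{d\in D}\bigwedge_{e\ge d}X(-,x_e)$ then simplifies to $\phi=\bigvee_{d\in D}X(-,d)$: the inequality $\le$ is seen by taking $e=d$ in the inner meet, and $\ge$ follows because for $e\ge d_0$ one has $X(x,e)\ge X(d_0,e)\with X(x,d_0)\ge X(x,d_0)$, so that $X(x,d_0)\le\bigwedge_{e\ge d_0}X(x,e)$.

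Since $X$ is $\mathcal{D}$-cocomplete, $\phi$ has a supremum $a$, characterized by $X(a,y)=\sub_X(\phi,X(-,y))$ for all $y\in X$; taking $y=a$ gives $1\le X(a,a)=\sub_X(\phi,X(-,a))$, that is, $\phi(x)\le X(x,a)$ for all $x\in X$. I would then verify that $a$ is the join of $D$ in $X_0$. It is an upper bound because for each $d_0\in D$ we get $X(d_0,a)\ge\phi(d_0)\ge X(d_0,d_0)\ge 1$, hence $d_0\le a$. It is the least one because if $d\le b$ in $X_0$ for all $d\in D$, then for every $x$ and $d$ one has $X(x,b)\ge X(d,b)\with X(x,d)\ge X(x,d)$, so $X(x,d)\ra X(x,b)=1$; therefore $X(a,b)=\sub_X(\phi,X(-,b))=\bigwedge_{x}\bigwedge_{d}\bigl(X(x,d)\ra X(x,b)\bigr)=1$, i.e. $a\le b$. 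Thus $a=\bigvee_{X_0}D$, and $X_0$ is directed complete.

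I do not anticipate any genuine obstacle. The only points requiring a little care are the identification $\phi=\bigvee_{d\in D}X(-,d)$ and the check that the net indexed by $D$ is forward Cauchy; the rest is a routine unwinding of the definition of the supremum of a weight together with the Yoneda lemma. As an alternative to the explicit net computation, one could instead verify directly that $\bigvee_{d\in D}X(-,d)$ is inhabited and irreducible and invoke Proposition~\ref{FC=irr}, but the net description seems more transparent.
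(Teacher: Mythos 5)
Your proof is correct and follows exactly the route of the paper's own argument: view $D$ as a forward Cauchy net, note that its associated forward Cauchy weight collapses to $\bigvee_{d\in D}X(-,d)$, take its supremum via $\mathcal{D}$-cocompleteness, and check that this supremum is the join of $D$ in $X_0$. You have simply filled in the details that the paper leaves as ``one readily deduces,'' and all of those details check out.
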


\begin{proof}The conclusion is essentially \cite[Proposition 4.5]{LiZ18a} when $\with$ is isomorphic to the product t-norm. Here we present a simple proof for the general case. Let $X$ be a $\mathcal{D}$-cocomplete $[0,1]$-ordered set; and  let $D$ be a directed subset of $X_0$, viewed  as a forward Cauchy net in $X$. By assumption, the forward Cauchy   weight $\bv_{x\in D}\bw_{y\geq x}X(-,y)$ has a supremum, say $a$. Since $\bw_{y\geq x}X(-,y)=X(-,x)$ for all $x\in D$,   $a$ is a supremum of the   weight $\bv_{x\in D}X(-,x)$. From this one readily deduces  that $a$ is a join of $D$ in $X_0$. \end{proof}

The above argument makes use of the fact that for each   directed set  $D$ of $X_0$, the weight $\bigvee_{d\in D}X(-,d)$ is forward Cauchy. The following conclusion says that for a complete $[0,1]$-lattice $X$, all forward Cauchy weights of $X$ are of this form.

\begin{prop}{\rm(\cite[Proposition 4.8]{LZ2020})} \label{ideal}
Let $X$ be a complete $[0,1]$-lattice and let $\phi$ be a forward Cauchy weight of $X$. Then
$$\Lambda(\phi)=\{x\in X\mid \phi(x)=1\}$$
is an ideal of $X_0$  and
$$\phi=\bigvee_{x\in\Lambda(\phi)}X(-,x).$$
Furthermore, $\sup\phi$ is the join of $\Lambda(\phi)$ in $X_0$.
\end{prop}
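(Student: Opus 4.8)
The plan is to realise $\phi$ via a forward Cauchy net: fix $\{x_i\}_{i\in I}$ with $\phi=\bigvee_i\psi_i$, where $\psi_i:=\bigwedge_{j\geq i}X(-,x_j)$, and then to pin down the elements of $X$ that detect membership in $\Lambda(\phi)$. Two observations need nothing about $X$. First, $\Lambda(\phi)$ is a lower set of $X_0$: if $\phi(x)=1$ and $y\leq x$ in $X_0$, the weight inequality gives $\phi(y)\geq\phi(x)\with X(y,x)=1$. Second, $\bigvee_{x\in\Lambda(\phi)}X(-,x)\leq\phi$, since for $x\in\Lambda(\phi)$ the weight inequality reads $X(-,x)=\phi(x)\with X(-,x)\leq\phi$. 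Hence the whole content is the reverse inequality $\phi\leq\bigvee_{x\in\Lambda(\phi)}X(-,x)$ together with the directedness of $\Lambda(\phi)$; I expect the reverse inequality to be the crux, since it is the only place where completeness of $X$ and the forward Cauchy condition genuinely interact.

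Because $X$ is a complete $[0,1]$-lattice, the Yoneda embedding has a left adjoint $\sup\colon\CP X\to X$, which is order-preserving for the underlying orders; set $b_i:=\sup\psi_i$. The key claim is that $\psi_i$ is representable, $\psi_i=X(-,b_i)$. For $\psi_i\leq X(-,b_i)$ one uses the general fact $\psi_i\leq X(-,\sup\psi_i)$, immediate from $X(\sup\psi_i,\sup\psi_i)=1$ and the formula $X(\sup\psi_i,z)=\CP X(\psi_i,X(-,z))$. For $X(-,b_i)\leq\psi_i$ one notes $\psi_i\leq X(-,x_j)$ for every $j\geq i$, applies $\sup$, and uses $\sup X(-,x_j)=x_j$ (Yoneda and separatedness) to get $b_i\leq x_j$ in $X_0$; monotonicity of $X(y,-)$ then gives $X(y,b_i)\leq X(y,x_j)$ for all $y$ and all $j\geq i$, i.e. $X(-,b_i)\leq\psi_i$. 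From $\psi_i=X(-,b_i)$ we get $\psi_i(b_i)=X(b_i,b_i)=1$, so $\phi(b_i)\geq\psi_i(b_i)=1$ and $b_i\in\Lambda(\phi)$; in particular $\Lambda(\phi)\neq\varnothing$. Moreover $\phi=\bigvee_i\psi_i=\bigvee_i X(-,b_i)\leq\bigvee_{x\in\Lambda(\phi)}X(-,x)$, which with the easy inclusion yields $\phi=\bigvee_{x\in\Lambda(\phi)}X(-,x)$; and $\{b_i\}$ is a monotone net in $X_0$ (as $i\leq i'$ forces $\psi_i\leq\psi_{i'}$ and $\sup$ is monotone), so $\phi$ is exhibited as a directed join of representables.

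It remains to show $\Lambda(\phi)$ is directed and that $\sup\phi$ equals the join of $\Lambda(\phi)$ in $X_0$. As $\Lambda(\phi)$ is a nonempty lower set, directedness reduces to closure under binary joins in the complete lattice $X_0$. Given $y,y'\in\Lambda(\phi)$, the join $y\vee y'$ in $X_0$ satisfies $X(y\vee y',z)=X(y,z)\wedge X(y',z)$ for all $z$ (because $y\vee y'=\sup(X(-,y)\vee X(-,y'))$ and the Yoneda lemma evaluates the right-hand side), so $\phi=\bigvee_i X(-,b_i)$ gives $\phi(y\vee y')=\bigvee_i\bigl(X(y,b_i)\wedge X(y',b_i)\bigr)$; since $\{X(y,b_i)\}_i$ and $\{X(y',b_i)\}_i$ are monotone nets with joins $\phi(y)=\phi(y')=1$, directedness of $I$ forces the join of their pointwise meet to be $1$ as well, so $y\vee y'\in\Lambda(\phi)$. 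Finally $\sup\phi=\bigvee\Lambda(\phi)$ follows formally from $\phi=\bigvee_{x\in\Lambda(\phi)}X(-,x)$: for $z\in X$ one has $X(\sup\phi,z)\geq 1$ iff $\phi\leq X(-,z)$ in $\CP X$ iff $X(x,z)=1$ for every $x\in\Lambda(\phi)$ iff $z$ is an upper bound of $\Lambda(\phi)$ in $X_0$, so $\sup\phi$ is precisely the least upper bound of $\Lambda(\phi)$. The step I expect to resist is the representability of the tails $\psi_i$ in the second paragraph — equivalently, the assertion that the single cut $\Lambda(\phi)$ already recovers $\phi$ — which is exactly where the completeness hypothesis on $X$ cannot be dispensed with.
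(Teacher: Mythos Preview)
Your argument is correct. The paper itself does not prove this proposition; it is quoted without proof from \cite{LZ2020}, so there is no in-paper argument to compare against. Your approach is natural and self-contained: the pivotal observation is that in a complete $[0,1]$-lattice each tail $\psi_i=\bigwedge_{j\geq i}X(-,x_j)$ is representable by $b_i=\sup\psi_i$ (the two-sided inequality you give is exactly right, using that $\sup$ is order-preserving and that $\sup X(-,x_j)=x_j$), which immediately places the $b_i$ inside $\Lambda(\phi)$ and exhibits $\phi$ as a directed join of representables. The remaining steps --- closure of $\Lambda(\phi)$ under binary joins via $X(y\vee y',z)=X(y,z)\wedge X(y',z)$ and the monotone-net argument, and the identification $\sup\phi=\bigvee\Lambda(\phi)$ via the adjunction --- are all sound. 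Your instinct that representability of the tails is the crux, and the one place where completeness is essential, is accurate.
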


\begin{cor}{\rm(\cite[Corollary 4.9]{LZ2020})} \label{Y=S}
A $[0,1]$-order-preserving map $f\colon X\lra Y$ between complete $[0,1]$-lattices is $[0,1]$-Scott continuous if and only if $f\colon X_0\lra Y_0$ is Scott continuous.
\end{cor}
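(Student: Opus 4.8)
The plan is to prove that, for a $[0,1]$-order-preserving map $f$ between complete $[0,1]$-lattices, \emph{both} conditions in the statement are equivalent to the single condition that $f$, regarded as a monotone map $X_0\lra Y_0$, preserves the join of every ideal of $X_0$. Proposition~\ref{ideal} is exactly what allows us to trade forward Cauchy weights for ideals, and the proof of Proposition~\ref{underlying order of Yoneda} is what lets us recognise the relevant $\mathcal{D}$-suprema as order-theoretic joins.

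The first step is to record two facts about the extension $f^\ra\colon\CP X\lra\CP Y$. Since $f$ preserves $[0,1]$-order, $f^\ra(X(-,x))=Y(-,f(x))$ for every $x\in X$: in $f^\ra(X(-,x))(y)=\bv_{z\in X}X(z,x)\with Y(y,f(z))$ each term is bounded by $Y(f(z),f(x))\with Y(y,f(z))\le Y(y,f(x))$, and the bound is attained at $z=x$ (where $X(x,x)=1$). Moreover $f^\ra$ preserves arbitrary joins of weights, because $\with$ distributes over joins and joins in $\CP X$, $\CP Y$ are computed pointwise. Consequently, for any directed $D\subseteq X_0$, writing $\phi_D=\bv_{d\in D}X(-,d)$, we get $f^\ra(\phi_D)=\bv_{d\in D}Y(-,f(d))$; since $f\colon X_0\lra Y_0$ is monotone, $f(D)$ is directed in $Y_0$, so the computation behind Proposition~\ref{underlying order of Yoneda} shows both that $\phi_D$ is forward Cauchy with $\sup\phi_D$ the join of $D$ in $X_0$, and that $f^\ra(\phi_D)$ is forward Cauchy with $\sup f^\ra(\phi_D)$ the join of $f(D)$ in $Y_0$. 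In summary,
\[ \sup\phi_D=\bv D\ \text{in}\ X_0,\qquad \sup f^\ra(\phi_D)=\bv f(D)\ \text{in}\ Y_0, \]
for every directed $D\subseteq X_0$.

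Now the equivalence follows. If $f$ is $[0,1]$-Scott continuous, then for directed $D\subseteq X_0$ we apply $[0,1]$-Scott continuity to $\phi_D$ and use the displayed identities to obtain $f(\bv D)=f(\sup\phi_D)=\sup f^\ra(\phi_D)=\bv f(D)$, so $f\colon X_0\lra Y_0$ is Scott continuous. Conversely, if $f\colon X_0\lra Y_0$ is Scott continuous, let $\phi$ be any forward Cauchy weight of $X$; by Proposition~\ref{ideal} the set $D:=\Lambda(\phi)$ is an ideal of $X_0$ with $\phi=\phi_D$ and $\sup\phi=\bv D$, whence the displayed identities together with Scott continuity give $f(\sup\phi)=f(\bv D)=\bv f(D)=\sup f^\ra(\phi)$, i.e.\ $f$ is $[0,1]$-Scott continuous. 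The only slightly delicate point is the identification of $\sup f^\ra(\phi_D)$ with the order-theoretic join of the image $f(D)$, which is precisely the content of the argument proving Proposition~\ref{underlying order of Yoneda}; granting Proposition~\ref{ideal} — which guarantees that over a complete $[0,1]$-lattice every forward Cauchy weight is of the form $\phi_D$ for an ideal $D$ — there is no further obstacle.
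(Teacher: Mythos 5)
Your argument is correct and is essentially the intended derivation: the paper delegates this corollary to \cite[Corollary 4.9]{LZ2020}, which is obtained from Proposition \ref{ideal} in exactly the way you describe, namely by identifying each forward Cauchy weight of a complete $[0,1]$-lattice with $\bv_{d\in D}X(-,d)$ for an ideal $D$ and its supremum with $\bv D$ in $X_0$, together with the identity $f^\ra\big(\bv_{d\in D}X(-,d)\big)=\bv_{d\in D}Y(-,f(d))$. All the auxiliary facts you invoke ($f^\ra\circ\sy_X=\sy_Y\circ f$, pointwise joins, directedness of $f(D)$) check out, so nothing is missing.
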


\begin{defn}  Let $X$ be a separated $[0,1]$-ordered set. We say that \begin{enumerate}[label=\rm(\roman*)] \setlength{\itemsep}{0pt}
	\item (\cite{AW2011,HW2011,HW2012})
$X$ is a $[0,1]$-domain if there is a string of adjunctions \[{\sf d} \dashv{\sup} \dashv\sy \colon X\lra\mathcal{D}X.\]
\item $X$ is a continuous $[0,1]$-lattice if it is at the same time a complete $[0,1]$-lattice and a $[0,1]$-domain. \end{enumerate}
\end{defn}

\begin{prop} \label{underlying [0,1]-CL} The underlying order of a continuous $[0,1]$-lattice is a continuous lattice; that is, if $X$ is a  continuous $[0,1]$-lattice, then $X_0$ is a continuous  lattice.\end{prop}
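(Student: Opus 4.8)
The plan is to recover the classical continuity of $X_0$ from the $[0,1]$-enriched structure by exhibiting $X_0$ as a Scott continuous retract of a domain. First note that since $X$ is in particular a complete $[0,1]$-lattice, $X_0$ is a complete lattice: for each $A\subseteq X$ the weight $\bigvee_{a\in A}X(-,a)$ has a supremum $s$, and by the Yoneda lemma $X(s,y)=\bigwedge_{a\in A}X(a,y)$ for all $y$, so $s$ is the join of $A$ in $X_0$; hence $X_0$ has all joins. As a complete lattice that is a domain is a continuous lattice, it thus suffices to prove that $X_0$ is a domain.

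\emph{The retract.} Applying the underlying-order functor $\iota$, which by Theorem \ref{Characterization of adjoints} carries $[0,1]$-adjunctions to adjunctions, to the string ${\sf d}\dashv\sup\dashv\sy\colon X\lra\mathcal{D}X$ yields ${\sf d}_0\dashv\sup_0\dashv\sy_0\colon X_0\lra(\mathcal{D}X)_0$. From $\sup\dashv\sy$ and separatedness we have $\sup\sy=\id_X$; hence, applying the adjunction ${\sf d}_0\dashv\sup_0$ to $\phi=\sy_0(a)$ gives ${\sf d}_0(a)\leq\sy_0(a)$ for every $a$, and applying the monotone map $\sup_0$ yields $\sup_0{\sf d}_0(a)\leq\sup_0\sy_0(a)=a$; since $a\leq\sup_0{\sf d}_0(a)$ is the unit of ${\sf d}_0\dashv\sup_0$, we conclude $\sup_0\circ{\sf d}_0=\id_{X_0}$. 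Thus $X_0$ is a retract of $(\mathcal{D}X)_0$ with retraction $\sup_0$ and section ${\sf d}_0$. Both maps are Scott continuous: ${\sf d}$ and $\sup$ are left adjoints in $[0,1]\text{-}{\sf Ord}$, hence preserve $\mathcal{D}$-suprema; and a map $f\colon A\lra B$ preserving $\mathcal{D}$-suprema between $\mathcal{D}$-cocomplete $[0,1]$-ordered sets restricts to a Scott continuous map $A_0\lra B_0$, because a directed subset $D$ of $A_0$ (a forward Cauchy net) generates the forward Cauchy weight $\bigvee_{d\in D}A(-,d)$ whose supremum is $\bigvee D$, and $f^\ra$ sends this weight to $\bigvee_{d\in D}B(-,f(d))$ (using $f^\ra\dashv f^\leftarrow$ and $f^\ra\circ\sy=\sy\circ f$), so that $f(\bigvee D)=\bigvee f(D)$ — this is an extension of Corollary \ref{Y=S} beyond complete $[0,1]$-lattices.

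\emph{Conclusion, and the main obstacle.} It remains to show that $(\mathcal{D}X)_0$ is a domain; granting this, $X_0$, being a Scott continuous retract of a domain, is again a domain (see e.g. \cite{Gierz2003}), hence a continuous lattice. The claim about $(\mathcal{D}X)_0$ is the $[0,1]$-enriched counterpart of the classical fact that ideal completions are algebraic: $\mathcal{D}X$ is a free algebra of the Kock–Z\"{o}berlein-type monad $\mathbb{D}$, so $\sfm_X\colon\mathcal{D}\mathcal{D}X\lra\mathcal{D}X$ admits a left adjoint $\mathcal{D}\sy_X$ and $\mathcal{D}X$ is itself a $[0,1]$-domain, and one checks that $(\mathcal{D}X)_0$ is a domain by computing its way-below relation through the tails $\bigwedge_{j\geq i}X(-,x_j)$ of forward Cauchy nets. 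I expect this step, together with the transfer of Scott continuity used above, to be the main obstacle: the underlying difficulty is that a directed join of $[0,1]$-valued weights need not be ``attained pointwise'', so the classical and the quantale-valued notions of Scott continuity and of way-below do not pass back and forth for free, and care is needed precisely where one descends from $\mathcal{D}$-suprema to directed joins in the underlying ordered sets. (If it is available, a shorter route is to characterize continuous $[0,1]$-lattices as the $[0,1]$-Scott continuous retracts of powers of $(\sQ,\alpha_L)$ and to invoke Corollary \ref{Y=S}, noting that the underlying order of $(\sQ,\alpha_L)$ is the continuous lattice $([0,1],\leq)$ and that continuous lattices are closed under products and Scott continuous retracts.)
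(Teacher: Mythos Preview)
Your strategy of exhibiting $X_0$ as a Scott continuous retract of $(\mathcal{D}X)_0$ is sound up to the point you yourself flag, but that flagged step is not a detail to be filled in later --- it is a genuine gap, and in fact the approach may not go through at all along that line.

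You reduce everything to the claim that $(\mathcal{D}X)_0$ is a domain. You propose to deduce this from the fact that $\mathcal{D}X$ is a $[0,1]$-domain (being a free $\mathbb{D}$-algebra). But the very point of Example~\ref{underling order of q-domain} is that the underlying order of a $[0,1]$-domain need \emph{not} be a domain, so this implication is not available. One might hope instead to use the comparison with $\idl X_0$: by Proposition~\ref{ideal} the maps $\Lambda\colon(\mathcal{D}X)_0\to\idl X_0$, $\phi\mapsto\{y:\phi(y)=1\}$, and $\Theta\colon\idl X_0\to(\mathcal{D}X)_0$, $D\mapsto\bigvee_{d\in D}X(-,d)$, satisfy $\Theta\dashv\Lambda$ and $\Theta\Lambda=\id$, so $\Lambda$ is an order-embedding onto the fixed points of the closure operator $\Lambda\Theta$ on the algebraic lattice $\idl X_0$. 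However, $\Lambda\Theta$ is in general \emph{not} Scott continuous (already for $X=([0,1],\alpha_R)$ with the {\L}ukasiewicz t-norm one has $\Lambda\Theta((a,1])=[a,1]$, so half-open ideals are not fixed and $\Lambda$ is not surjective), and fixed points of a non-Scott-continuous closure operator on a continuous lattice need not form a domain. The ``tails'' idea does not rescue this either: the representables $X(-,x)$ are compact in $\mathcal{D}X$ as a $[0,1]$-domain, but they are typically \emph{not} compact in $(\mathcal{D}X)_0$, since $\bigvee_i\phi_i(x)=1$ does not force $\phi_i(x)=1$ for some $i$.

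Your ``shorter route'' is Proposition~\ref{SCL}, which is only available under the condition~(S); the present proposition is stated for an arbitrary continuous t-norm, so that route is not open here (and in any case Proposition~\ref{SCL} is placed after and may rely on the present result).

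The paper itself does not give an argument either: it simply records that the statement is contained in \cite[Proposition~4.11]{LZ2020}. If you want a self-contained proof, the natural path is to work directly with Proposition~\ref{ideal} and the adjunction ${\sf d}_0\dashv\sup_0$ to produce, for each $x$, enough elements way below $x$ in $X_0$ with join $x$ --- but this requires more than you have written, and the obstacle you identified is precisely where the real work lies.
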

\begin{proof} Contained in \cite[Proposition 4.11]{LZ2020}. \end{proof}

\begin{prop}  \label{chacl}
Let $X$ be a  complete $[0,1]$-lattice such that $X_0$ is a continuous  lattice. Then the following conditions are equivalent: \begin{enumerate}[label=\rm(\arabic*)] \setlength{\itemsep}{0pt}
	\item $X$ is a continuous $[0,1]$-lattice.
 \item For each $x\in X$ and each forward Cauchy weight $\phi$ of $X$,
	$$ X(x,\sup\phi)=\bigwedge_{y\ll x}\phi(y),$$
	where $\ll$ denotes the way below relation in $X_0$.
\item The map \[{\sf d}\colon X\lra \mathcal{D}X,\quad {\sf d}(x)=\bv_{y\ll x}X(-,y)\] preserves $[0,1]$-order, 	where $\ll$ denotes the way below relation in $X_0$.\end{enumerate}	
\end{prop}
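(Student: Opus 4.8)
The plan is to observe that, once ${\sf d}$ is seen to be a well-defined map $X\to\mathcal{D}X$, each of (1)--(3) is equivalent to the single assertion that ${\sf d}$ is left adjoint to $\sup\colon\mathcal{D}X\to X$, and to extract this from Theorem~\ref{Characterization of adjoints} and Proposition~\ref{ideal}. First I would record the preliminaries. Since $X$ is a complete $[0,1]$-lattice it is $\mathcal{D}$-cocomplete, so $\sup\colon\mathcal{D}X\to X$ exists with $\sup\dashv\sy$, and being a left adjoint it preserves $[0,1]$-order by Theorem~\ref{Characterization of adjoints}(i). Since $X_0$ is a continuous lattice, for each $x\in X$ the set $\{y\in X\mid y\ll x\}$ is an ideal of $X_0$ with join $x$; hence, using that $\bv_{d\in D}X(-,d)$ is forward Cauchy for every directed $D\subseteq X_0$, the weight ${\sf d}(x)=\bv_{y\ll x}X(-,y)$ is forward Cauchy and ${\sf d}\colon X\to\mathcal{D}X$ is a genuine map (which is what condition (3) is about).

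Next comes the key computation. For $x\in X$ and $\phi\in\mathcal{D}X$, using that $\sub_X(-,\phi)$ turns joins into meets and the Yoneda lemma $\sub_X(X(-,y),\phi)=\phi(y)$,
\begin{equation*}
\mathcal{D}X({\sf d}(x),\phi)=\sub_X\Big(\bv_{y\ll x}X(-,y),\phi\Big)=\bw_{y\ll x}\sub_X(X(-,y),\phi)=\bw_{y\ll x}\phi(y).\tag{$\star$}
\end{equation*}
In particular ${\sf d}(x)\leq\phi$ in $(\mathcal{D}X)_0$ iff $\phi(y)=1$ for all $y\ll x$, that is, iff $\{y\mid y\ll x\}\subseteq\Lambda(\phi)$, where $\Lambda(\phi)=\{z\in X\mid\phi(z)=1\}$. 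By Proposition~\ref{ideal}, $\Lambda(\phi)$ is an ideal of $X_0$ with $\sup\phi=\bv\Lambda(\phi)$; combining this with $x=\bv\{y\mid y\ll x\}$ and the definition of the way-below relation (if $y\ll x\leq\bv\Lambda(\phi)$, then $y\leq d$ for some $d\in\Lambda(\phi)$, whence $y\in\Lambda(\phi)$) gives
\begin{equation*}
{\sf d}(x)\leq\phi\ \text{in}\ (\mathcal{D}X)_0\quad\Longleftrightarrow\quad x\leq\sup\phi\ \text{in}\ X_0.\tag{$\star\star$}
\end{equation*}

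With ($\star$) and ($\star\star$) in hand I would close the cycle (1)$\Rightarrow$(3)$\Rightarrow$(2)$\Rightarrow$(1). By ($\star$), the equation in (2) says precisely $\mathcal{D}X({\sf d}(x),\phi)=X(x,\sup\phi)$ for all $x\in X$ and $\phi\in\mathcal{D}X$, i.e. ${\sf d}\dashv\sup$; thus (2) is equivalent to ${\sf d}\dashv\sup$, and I use this twice. For (1)$\Rightarrow$(3): if $X$ is a continuous $[0,1]$-lattice, then $\sup\colon\mathcal{D}X\to X$ has some left adjoint $L$; $L$ preserves $[0,1]$-order and, by Theorem~\ref{Characterization of adjoints}, $L\colon X_0\to(\mathcal{D}X)_0$ is left adjoint to $\sup\colon(\mathcal{D}X)_0\to X_0$; but ($\star\star$) says ${\sf d}\colon X_0\to(\mathcal{D}X)_0$ is left adjoint to the same map, and as $\mathcal{D}X$ is separated (it inherits its order from the complete $[0,1]$-lattice $\mathcal{P}X$) left adjoints between these posets are unique, so ${\sf d}=L$ preserves $[0,1]$-order. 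For (3)$\Rightarrow$(2): if ${\sf d}$ preserves $[0,1]$-order, then ${\sf d}$ and $\sup$ both preserve $[0,1]$-order and ($\star\star$) is the adjointness of the underlying maps, so Theorem~\ref{Characterization of adjoints} gives ${\sf d}\dashv\sup$, hence (2). For (2)$\Rightarrow$(1): from ${\sf d}\dashv\sup$ we obtain the string ${\sf d}\dashv\sup\dashv\sy\colon X\to\mathcal{D}X$, so $X$ is a $[0,1]$-domain; being also a complete $[0,1]$-lattice, it is a continuous $[0,1]$-lattice.

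The step I expect to carry the real content is ($\star\star$): the passage from ${\sf d}(x)\leq\phi$ to $x\leq\sup\phi$ rests on Proposition~\ref{ideal} --- which recovers a forward Cauchy weight of a complete $[0,1]$-lattice from its $1$-cut ideal $\Lambda(\phi)$ and identifies $\sup\phi$ with the $X_0$-join of that ideal --- together with the continuity of $X_0$, which is exactly what makes $x$ the directed join of the elements way below it. Once this bridge between the enriched and the underlying pictures is set up, the remainder is bookkeeping around Theorem~\ref{Characterization of adjoints} and the defining equation of an adjunction.
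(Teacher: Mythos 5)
Your proof is correct and follows essentially the same route as the paper: the paper's own argument for $(3)\Rightarrow(1)$ likewise rests on Proposition~\ref{ideal} to see that ${\sf d}$ is left adjoint to $\sup$ at the level of underlying orders and then invokes Theorem~\ref{Characterization of adjoints} to upgrade this to an enriched adjunction. The only difference is that the paper defers the remaining implications to an external reference, whereas you give a self-contained verification of the full cycle via the computation $(\star)$ and the equivalence $(\star\star)$, both of which check out.
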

\begin{proof} This   is a slightly different formulation of \cite[Proposition 4.11]{LZ2020}. We check $(3)\Rightarrow(1)$ here. By Proposition \ref{ideal} one sees that for each $x\in X$, $\bv_{y\ll x}X(-,y)$ is the smallest  forward Cauchy weight  of $X$ (under pointwise order) that has $x$ as a supremum, so, ${\sf d}\colon X_0\lra (\mathcal{D}X)_0$ is left adjoint to $\sup\colon  (\mathcal{D}X)_0\lra X_0$. Hence, ${\sf d}\colon X \lra \mathcal{D}X$ is left adjoint to $\sup\colon  \mathcal{D}X\lra X$ by Theorem \ref{Characterization of adjoints}. \end{proof}

\begin{exmp}  \label{dr is continuous} For each continuous t-norm $\with$, the $[0,1]$-ordered set $([0,1],\alpha_R)$ is a continuous $[0,1]$-lattice. Write $X$ for the $[0,1]$-ordered set $([0,1],\alpha_R)$.
By Proposition \ref{chacl}, it suffices to check that
the map
\[{\sf d}\colon X\lra \mathcal{D}X,\quad {\sf d}(t)(x) =\begin{cases}t\ra x, &t=1,\\ \bigvee\limits_{b>t}(b\ra x), &t<1 \end{cases} \]
preserves $[0,1]$-order. To this end, we only need to check that \begin{equation}\label{inequa1} s\ra t \leq \bw_{x\in[0,1]}\Big(\bigvee_{a>t}(a\ra x)\ra\bigvee_{b>s}(b\ra x)\Big) \end{equation} whenever $t<s<1$. For each $a>t$, let $b=(s\ra t)\ra a$. By continuity of $\with$ one readily verifies that $b>s$. Then, the inequality (\ref{inequa1})   follows from the fact that for each $x\in[0,1]$, \[(s\ra t)\with (a\ra x)\leq ((s\ra t)\ra a)\ra x =b\ra x.\]
\end{exmp}

\begin{exmp}\label{continuity of dl} For a continuous t-norm $\with$, the $[0,1]$-ordered set $([0,1],\alpha_L)$ is a continuous $[0,1]$-lattice if and only if $\with $ satisfies the condition (S). The conclusion is contained in \cite[Theorem 6.4]{LZ2020}, a direct verification is included here for convenience of the reader.

Write $X$ for the $[0,1]$-ordered set $([0,1],\alpha_L)$.
By Proposition \ref{chacl}, it suffices to show that the map \[{\sf d}\colon X\lra \mathcal{D}X,\quad   {\sf d}(t)(x) =\begin{cases}x\ra 0, &t=0,\\ \bigvee\limits_{b<t}(x\ra b), &t>0 \end{cases} \]
preserves $[0,1]$-order if and only if
$\with $ satisfies the condition (S). We check the only-if-part here and leave the verification of the if-part  to the reader.

Suppose that  $\with $ does not satisfy the condition (S). Then there exist idempotent elements $p,q>0$  such that $([p,q],\with ,q)$ is isomorphic to $([0,1],\with_{\L},1)$. Pick $t\in(p,q)$. Then \[{\sf d}(t)(x)= \bigvee_{b<t}(x\ra b)= x\ra t,\] hence  \[\mathcal{D}X({\sf d}(t), {\sf d}(p))= \bw_{x\in[0,1]}\Big((x\ra t)\ra\bv_{b<p}(x\ra b)\Big)\leq \bv_{b<p}(t\ra b)=p <t\ra p,\] which shows that ${\sf d}\colon X\lra \mathcal{D}X$ does not preserve  $[0,1]$-order.
\end{exmp}

For each $[0,1]$-ordered set $X$, the $[0,1]$-ordered set $\mathcal{D}X$ of forward Cauchy weights is a $[0,1]$-domain. This fact is an instance of   \cite[Proposition 3.3]{LZ2020} since $(\mathcal{D},\sfm,\sy)$ is a submonad of $(\mathcal{P},\sfm,\sy)$. But, in contrast to the fact that  the set of ideals of a complete lattice is always a continuous lattice, the $[0,1]$-ordered set $\mathcal{D}X$ of forward Cauchy weights of a complete $[0,1]$-lattice $X$ may fail to be a  complete $[0,1]$-lattice. Actually, in order that $\mathcal{D}X$  is a complete $[0,1]$-lattice for every complete $[0,1]$-lattice $X$,  it is necessary  and sufficient to require  that the t-norm $\with$ satisfies the condition (S), see Lemma 5.3 and Theorem 6.4 in \cite{LZ2020}.

While the underlying order of a  $\mathcal{D}$-cocomplete $[0,1]$-ordered set is  directed complete (Proposition \ref{underlying order of Yoneda}) and the underlying order of a continuous $[0,1]$-lattice is a continuous lattice (Proposition \ref{underlying [0,1]-CL}), the underlying order of a $[0,1]$-domain may fail to be a domain.

\begin{exmp}\label{underling order of q-domain}
In this example, we assume that the continuous t-norm $\with$ satisfies the condition (S). Let $X=[0,1]\cup\{\infty\}$ and let $$ X(x,y)= \begin{cases}x\ra y, & x,y\in [0,1], \\ 1, & x=y=\infty,\\  0, & x\in [0,1], y=\infty, \\  y, & x=\infty,y\in[0,1]. \end{cases} $$

Since forward Cauchy nets in $X$ are, essentially, the forward Cauchy nets in $([0,1],\alpha_L)$ plus constant nets with value $\infty$, it is clear that  $X$ is a separated and $\mathcal{D}$-cocomplete $[0,1]$-ordered set. Since $\with$ satisfies the condition (S), a routine calculation shows that the map \[{\sf d}\colon X\lra \mathcal{D}X, \quad {\sf d}(x)=\begin{cases}X(-,\infty), & x=\infty,\\    X(-,0), & x=0,\\ \bv\limits_{y<x}X(-,y),&x\in(0,1]\end{cases}\] is left adjoint to $\sup\colon\mathcal{D}X\lra X$, so $X$ is a $[0,1]$-domain. But,  $X_0$ is not a domain.
\end{exmp}

Let $X$ be a $\mathcal{D}$-cocomplete $[0,1]$-ordered set. The \emph{way below $[0,1]$-relation} (with respect to the class of weights $\mathcal{D}$) \cite{HW2011,HW2012,Was2009} of $X$ refers to the $[0,1]$-relation $w \colon X\times X\lra [0,1]$ defined by
$$w (x,y)=\bigwedge_{\phi\in\mathcal{D}X}X(y,\sup\phi)\ra\phi(x).$$
Some basic properties of the way below $[0,1]$-relation $w$: for all $x,y,z,u\in X$,
\begin{enumerate}[label=\rm(\roman*)] \setlength{\itemsep}{0pt}
	\item  $w(x,y)\leq X(x,y)$;
	\item  $w(y,z)\with X(x,y)\leq w(x,z)$, in particular, $w(-,z)\colon X
\lra ([0,1],\alpha_R)$ preserves $[0,1]$-order;
	\item  $X(z,u)\with w(y,z)\leq w(y,u)$, in particular, $w(y,-)\colon X
\lra ([0,1],\alpha_L)$ preserves $[0,1]$-order.
\end{enumerate}

\begin{prop} {\rm(\cite{HW2011,HW2012,Was2009})} \label{way below as left adjoint}
Let $X$ be a separated and $\mathcal{D}$-cocomplete $[0,1]$-ordered set. Then $X$ is a $[0,1]$-domain if and only if for all $x\in X$, the weight $w(-,x)$ is forward Cauchy with $x$ being a supremum. In this case, the left adjoint ${\sf d}\colon X\lra\mathcal{D}X$ of $\sup\colon\mathcal{D}X\lra X$ is given by ${\sf d}(x)=w(-,x)$.
\end{prop}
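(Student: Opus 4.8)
The plan is to verify the adjunction ${\sf d}\dashv\sup$ directly from its defining identity. Recall that $\mathcal{D}X$ carries the $[0,1]$-order induced by $\sub_X$, and that (by the definition of adjoint preceding Theorem \ref{Characterization of adjoints}) a pair of maps ${\sf d}\colon X\lra\mathcal{D}X$ and $\sup\colon\mathcal{D}X\lra X$ satisfies ${\sf d}\dashv\sup$ if and only if $\sub_X({\sf d}(x),\phi)=X(x,\sup\phi)$ for all $x\in X$ and all $\phi\in\mathcal{D}X$. Since $\sup\dashv\sy$ already holds by $\mathcal{D}$-cocompleteness, the whole proposition reduces to the assertion that the identity $\sub_X(w(-,x),\phi)=X(x,\sup\phi)$ holds for every $x$ and every forward Cauchy $\phi$ if and only if each $w(-,x)$ is forward Cauchy with supremum $x$, and that in that case one is forced to have ${\sf d}(x)=w(-,x)$. (Note $w(-,x)$ is always a weight, by property (ii) of the way below $[0,1]$-relation listed before the statement.)

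First I would record the inequality $X(x,\sup\phi)\leq\sub_X(w(-,x),\phi)$, valid for all $x$ and all $\phi\in\mathcal{D}X$ with no extra hypothesis: straight from the definition of $w$ one has $w(y,x)\leq X(x,\sup\phi)\ra\phi(y)$, hence $X(x,\sup\phi)\with w(y,x)\leq\phi(y)$, hence $X(x,\sup\phi)\leq w(y,x)\ra\phi(y)$, and taking the meet over $y$ gives the claim. For the ``if'' part it then remains to prove the reverse inequality under the hypothesis $\sup w(-,x)=x$. Here I would use the elementary fact that $\psi(z)\leq X(z,\sup\psi)$ for every weight $\psi$ possessing a supremum (obtained by taking $z=\sup\psi$ in $1\leq X(\sup\psi,\sup\psi)=\mathcal{P}X(\psi,X(-,\sup\psi))$), applied to $\psi=\phi$:
\begin{align*}
\sub_X(w(-,x),\phi)&=\bw_{z\in X}\big(w(z,x)\ra\phi(z)\big)\\
&\leq\bw_{z\in X}\big(w(z,x)\ra X(z,\sup\phi)\big)\\
&=\mathcal{P}X\big(w(-,x),X(-,\sup\phi)\big)=X\big(\sup w(-,x),\sup\phi\big)=X(x,\sup\phi),
\end{align*}
where the last two equalities are the defining property of the supremum of $w(-,x)$. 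Combining the two inequalities yields the required identity; together with $\sup\dashv\sy$ this gives ${\sf d}\dashv\sup\dashv\sy$ with ${\sf d}(x)=w(-,x)$, so $X$ is a $[0,1]$-domain.

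For the ``only if'' part, assume ${\sf d}\dashv\sup\dashv\sy$. I would first show $\sup{\sf d}(x)=x$: using $\sup\dashv\sy$, then ${\sf d}\dashv\sup$, and then $\sup\sy(y)=y$ (Yoneda lemma), one gets $X(\sup{\sf d}(x),y)=\sub_X({\sf d}(x),X(-,y))=\mathcal{D}X({\sf d}(x),\sy(y))=X(x,\sup\sy(y))=X(x,y)$ for every $y$, so $\sup{\sf d}(x)$ and $x$ are isomorphic, hence equal by separatedness. Thus ${\sf d}(x)\in\mathcal{D}X$ is forward Cauchy with supremum $x$. It remains to identify ${\sf d}(x)$ with $w(-,x)$. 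For ``$\leq$'', substitute $\phi={\sf d}(x)$ into the definition of $w$ and use $X(x,\sup{\sf d}(x))=X(x,x)=1$, which gives $w(y,x)\leq{\sf d}(x)(y)$. For ``$\geq$'', note that for every $\phi\in\mathcal{D}X$ one has $X(x,\sup\phi)=\mathcal{D}X({\sf d}(x),\phi)=\sub_X({\sf d}(x),\phi)\leq{\sf d}(x)(y)\ra\phi(y)$, whence ${\sf d}(x)(y)\with X(x,\sup\phi)\leq\phi(y)$, i.e. ${\sf d}(x)(y)\leq X(x,\sup\phi)\ra\phi(y)$, and the meet over $\phi$ gives ${\sf d}(x)(y)\leq w(y,x)$. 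Hence $w(-,x)={\sf d}(x)$, which is forward Cauchy with supremum $x$.

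I do not anticipate a genuine obstacle: the argument is a disciplined unwinding of the definitions of the way below $[0,1]$-relation, of suprema of weights, and of adjointness, together with the Yoneda lemma. The only points needing care are the residuation bookkeeping (keeping track of which argument of $\ra$ is monotone) and spotting the small lemma $\psi(z)\leq X(z,\sup\psi)$, which is exactly what makes the reverse inequality in the ``if'' direction close.
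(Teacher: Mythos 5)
Your proof is correct. The paper cites this proposition from the literature without reproducing a proof, so there is nothing to compare against; your argument is the standard direct verification of the adjunction identity $\mathcal{D}X({\sf d}(x),\phi)=X(x,\sup\phi)$, and each step (the unconditional inequality $X(x,\sup\phi)\leq\sub_X(w(-,x),\phi)$, the use of $\psi(z)\leq X(z,\sup\psi)$ to close the converse, and the two residuation computations identifying ${\sf d}(x)$ with $w(-,x)$ in the ``only if'' direction) checks out.
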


\begin{cor}\label{way-product}
Let $X$ be a continuous $[0,1]$-lattice. Then for all $x,y\in X$,
$$w(x,y)=\bigvee_{a\ll y}X(x,a)=\bigvee_{a\ll y}w(x,a),$$ where $\ll$ refers to the way below relation in the complete lattice $X_0$.
\end{cor}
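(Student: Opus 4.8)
The plan is to derive the left-hand equality by identifying the left adjoint ${\sf d}\colon X\lra\mathcal{D}X$ of $\sup\colon\mathcal{D}X\lra X$ in two different ways, and then to obtain the right-hand equality from the first by exploiting the interpolation property of the way below relation in the continuous lattice $X_0$.

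\emph{First equality.} Since $X$ is a continuous $[0,1]$-lattice, Proposition \ref{underlying [0,1]-CL} gives that $X_0$ is a continuous lattice, so Proposition \ref{chacl} is applicable; the argument for $(3)\Rightarrow(1)$ there shows that the map ${\sf d}(x)=\bv_{a\ll x}X(-,a)$ is the left adjoint of $\sup\colon\mathcal{D}X\lra X$. On the other hand, $X$ is in particular a separated and $\mathcal{D}$-cocomplete $[0,1]$-domain, so Proposition \ref{way below as left adjoint} identifies this same left adjoint as ${\sf d}(x)=w(-,x)$. Because $\mathcal{D}X$ is separated (it is a sub-$[0,1]$-ordered set of the complete $[0,1]$-lattice $\mathcal{P}X$), the left adjoint is unique, whence $w(-,y)=\bv_{a\ll y}X(-,a)$ as weights of $X$; evaluating both sides at $x$ gives $w(x,y)=\bv_{a\ll y}X(x,a)$.

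\emph{Second equality.} The inequality $\bv_{a\ll y}w(x,a)\leq\bv_{a\ll y}X(x,a)$ is immediate from property (i) of the way below $[0,1]$-relation, namely $w(x,a)\leq X(x,a)$. For the reverse inequality fix $a\ll y$. Since $X_0$ is a continuous lattice, the way below relation interpolates, so there is $b\in X$ with $a\ll b\ll y$. Applying the first equality with $b$ in place of $y$ gives $w(x,b)=\bv_{c\ll b}X(x,c)\geq X(x,a)$, because $a$ occurs among the $c\ll b$. Hence $X(x,a)\leq w(x,b)\leq\bv_{b\ll y}w(x,b)$, and taking the join over all $a\ll y$ yields $\bv_{a\ll y}X(x,a)\leq\bv_{b\ll y}w(x,b)$, completing the argument.

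The only delicate point is the identification of ${\sf d}$ in the first part: one must verify that the formula $\bv_{a\ll x}X(-,a)$ extracted from the proof of Proposition \ref{chacl} genuinely coincides with the map $w(-,x)$ of Proposition \ref{way below as left adjoint}, which rests on uniqueness of adjoints in $[0,1]\text{-}{\sf Ord}$ together with the separatedness of $\mathcal{D}X$. Everything else is a short computation, the sole external ingredient being the standard interpolation property of the way below relation in a continuous lattice \cite{Gierz2003}.
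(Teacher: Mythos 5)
Your proof is correct and follows essentially the same route as the paper: the first equality comes from matching the left adjoint ${\sf d}$ of $\sup\colon\mathcal{D}X\lra X$ as given by Proposition \ref{chacl} against the description $w(-,x)$ in Proposition \ref{way below as left adjoint}, and the second equality comes from interpolating $a\ll c\ll y$ in the continuous lattice $X_0$ and using $X(x,a)\leq w(x,c)$. Your explicit appeal to uniqueness of adjoints merely spells out what the paper leaves implicit.
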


\begin{proof} The first equality follows  immediately from Proposition \ref{chacl} and Proposition \ref{way below as left adjoint}. To see the second equality,  assume that $a\ll y$ in $X_0$. Pick some $c\in X$  such that $a\ll c\ll y$ in $X_0$. Then $X(x,a)\leq w(x,c)$ by the first equality, hence
$$ \bigvee_{a\ll y}X(x,a)\leq \bigvee_{a\ll y}w(x,a).$$ The converse inequality is trivial.
\end{proof}

The following conclusion, known as the interpolation property, has appeared in different contexts in the literature, see e.g. \cite{HW2011,HW2012,Stubbe2007}.

\begin{prop}\label{interpolation} The way below $[0,1]$-relation in a $[0,1]$-domain $X$ has the interpolation property in the sense that \[w(x,y)=\bv_{z\in X}w(z,y)\with w(x,z).\]\end{prop}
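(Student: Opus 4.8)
The plan is to reduce the interpolation property to an identity about the monad $\mathbb{D}$, exploiting that $\mathbb{D}$ is of Kock–Zöberlein type, exactly as in the classical proof that $\ll$ interpolates in a continuous lattice (via $\thda\dashv\sup\dashv\sy$). First I would record the string of adjunctions ${\sf d}\dashv\sup\dashv\sy\colon X\lra\mathcal{D}X$ that defines a $[0,1]$-domain, together with Proposition \ref{way below as left adjoint}, which identifies ${\sf d}(x)=w(-,x)$. The key structural fact I would isolate is that the left adjoint ${\sf d}$ is itself $[0,1]$-Scott continuous (indeed a homomorphism of $\mathbb{D}$-algebras), which for a Kock–Zöberlein monad follows from ${\sf d}$ being left adjoint to $\sup$ and $\sup$ being the algebra structure map; concretely this gives $\mathcal{D}{\sf d}\dashv\sup_{\mathcal{D}X}$ and hence a compatibility ${\sf d}\circ\sup=\sfm_X\circ\mathcal{D}{\sf d}$ between ${\sf d}$ on $X$ and on $\mathcal{D}X$.

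The main computation is then to evaluate ${\sf d}$ applied to a supremum. For a forward Cauchy weight $\phi$ of $X$ with supremum $\sup\phi$, I want ${\sf d}(\sup\phi)=\sfm_X(({\sf d})^{\ra}\phi)$, i.e. $w(-,\sup\phi)=\bv_{z\in X}\phi(z)\with w(-,z)$; this is precisely the statement that ${\sf d}$ preserves $\mathcal{D}$-suprema, combined with the formula \eqref{composition as sup} for suprema of weights of the form $\psi^\ra(\lambda)$ applied to $\psi=w(-,-)$ viewed suitably, or directly the description $\sfm_X(\Phi)=\bv_{\phi}\Phi(\phi)\with\phi$ of the multiplication. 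Now specialize $\phi={\sf d}(y)=w(-,y)$, which is forward Cauchy with $\sup\phi=y$ by Proposition \ref{way below as left adjoint}. Substituting $\sup\phi=y$ into $w(-,\sup\phi)=\bv_{z}\phi(z)\with w(-,z)$ yields $w(-,y)=\bv_{z\in X}w(z,y)\with w(-,z)$, and evaluating both sides at an arbitrary $x\in X$ gives exactly $w(x,y)=\bv_{z\in X}w(z,y)\with w(x,z)$, which is the claimed interpolation identity.

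The one inequality that needs care (the ``$\leq$'' direction — the ``$\geq$'' direction is immediate from property (ii) of $w$, namely $w(z,y)\with w(x,z)\leq w(z,y)\with X(x,z)\leq w(x,z)\cdot\ldots$ wait, more directly $w(z,y)\with w(x,z)\leq w(x,y)$ by combining (ii) and (iii) through transitivity) is the claim that ${\sf d}$ actually preserves the $\mathcal{D}$-supremum of the forward Cauchy weight $w(-,y)$. Here I would argue: ${\sf d}\colon X\lra\mathcal{D}X$ is a left adjoint between $\mathcal{D}$-cocomplete $[0,1]$-ordered sets ($X$ is $\mathcal{D}$-cocomplete since it is a $[0,1]$-domain, and $\mathcal{D}X$ is $\mathcal{D}$-cocomplete because it is a $[0,1]$-domain, hence in particular $\mathcal{D}$-cocomplete, by the remark preceding Example \ref{continuity of dl}), and every left adjoint between cocomplete-type $[0,1]$-ordered sets preserves the relevant suprema — this is the $\mathbb{D}$-analogue of the statement (quoted in the excerpt for $\mathbb{P}$) that left adjoints preserve suprema. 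I expect this verification — that ${\sf d}$ preserves $\mathcal{D}$-suprema, i.e. is a morphism in $[0,1]\text{-}\sf Ord^\uparrow$, equivalently a $\mathbb{D}$-homomorphism — to be the main obstacle, and I would handle it by the Kock–Zöberlein argument: since $\sup\dashv\sy$ exhibits $X$ as a $\mathbb{D}$-algebra and ${\sf d}\dashv\sup$, the composite ${\sf d}$ is automatically a lax, hence (by Kock–Zöberlein) strict, algebra morphism, so it commutes with the structure maps as needed. With that in hand the rest is the short substitution above.
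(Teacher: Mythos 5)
Your proposal is correct. Note first that the paper itself gives no proof of Proposition \ref{interpolation} --- it is stated with citations to \cite{HW2011,HW2012,Stubbe2007} only --- so there is nothing internal to compare against; your argument is essentially the standard one from those references. The core of it is exactly right: ${\sf d}\dashv\sup$ and ${\sf d}(x)=w(-,x)$, so applying ${\sf d}$ to the forward Cauchy weight $\phi={\sf d}(y)$, whose supremum is $y$, gives
\[
w(-,y)={\sf d}(\sup\phi)=\sup\nolimits_{\mathcal{D}X}{\sf d}^\ra(\phi)=\sfm_X({\sf d}^\ra\phi)=\bv_{z\in X}\phi(z)\with {\sf d}(z)=\bv_{z\in X}w(z,y)\with w(-,z),
\]
and evaluating at $x$ is the interpolation identity. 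One remark: the step you single out as the main obstacle --- that ${\sf d}$ preserves the $\mathcal{D}$-supremum of $\phi$ --- does not need the Kock--Z\"{o}berlein machinery at all. Any left adjoint $f\colon X\lra Y$ between $[0,1]$-ordered sets sends an existing supremum of a weight $\phi$ to a supremum of $f^\ra(\phi)$ (unconditionally: $Y(f(\sup\phi),y)=X(\sup\phi,g(y))=\sub_X(\phi,X(-,g(y)))=\sub_Y(f^\ra\phi,Y(-,y))$ using $f\dashv g$ and $f^\ra\dashv f^\leftarrow$), so ${\sf d}(\sup\phi)=\sup_{\mathcal{D}X}{\sf d}^\ra(\phi)$ is immediate; the only remaining bookkeeping is that this supremum in $\mathcal{D}X$ is computed by $\sfm_X$, i.e.\ as $\bv_{\psi}{\sf d}^\ra\phi(\psi)\with\psi=\bv_z\phi(z)\with{\sf d}(z)$, which follows from the explicit formula for $\sy_X^\leftarrow$ and the fact that $(\mathcal{D},\sfm,\sy)$ is a submonad of $(\mathcal{P},\sfm,\sy)$. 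The ``$\geq$'' inequality, as you note, is anyway a one-line consequence of properties (i) and (ii) of $w$. So the proof goes through; it could even be shortened by dropping the algebra-morphism discussion.
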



The following definition is a direct extension of that for generalized metric spaces (i.e., ordered sets valued in Lawvere's quantale)  in \cite{BvBR1998,Goubault}.
\begin{defn}
An element $a$ of a $[0,1]$-ordered set $X$  is    compact if
$$X(a,-)\colon X\lra([0,1],\alpha_L)$$
is $[0,1]$-Scott continuous.  \end{defn}

\begin{lem}\label{characterizing compact element} Let $X$ be a $\mathcal{D}$-cocomplete $[0,1]$-ordered set and let $a$ be an element of $X$. Then the following conditions are equivalent: \begin{enumerate}[label=\rm(\arabic*)]\setlength{\itemsep}{0pt}
\item    $a$   is  compact. \item For each forward Cauchy weight $\phi$ of $X$, $X(a,\sup\phi)=\phi(a)$.  \item $w(-,a)=X(-,a)$. \end{enumerate} \end{lem}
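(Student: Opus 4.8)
The plan is to prove the equivalences $(1)\Leftrightarrow(2)\Leftrightarrow(3)$ by unwinding the definitions of compactness, supremum, and the way below $[0,1]$-relation $w$.

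First I would tackle $(1)\Leftrightarrow(2)$. Recall that $a$ is compact means $X(a,-)\colon X\lra([0,1],\alpha_L)$ is $[0,1]$-Scott continuous, i.e. $X(a,\sup\phi)=\sup (X(a,-))^\ra(\phi)$ for every forward Cauchy weight $\phi$, whenever $\sup\phi$ exists — which it does, since $X$ is $\mathcal{D}$-cocomplete. By Example~2.11 in the excerpt (the displayed formula \eqref{composition as sup}, viewing $X(a,-)$ as a $[0,1]$-order-preserving map $X\lra([0,1],\alpha_L)$), we have $\sup (X(a,-))^\ra(\phi)=\bigvee_{x\in X}\phi(x)\with X(a,x)$. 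So compactness of $a$ is exactly the statement that $X(a,\sup\phi)=\bigvee_{x\in X}\phi(x)\with X(a,x)$ for every forward Cauchy $\phi$. Now I must reconcile this with condition (2), namely $X(a,\sup\phi)=\phi(a)$. One direction is easy: if $X(a,\sup\phi)=\phi(a)$ always holds, then since $\sy(a)=X(-,a)\in\mathcal{D}X$ and using the Yoneda lemma $\phi(a)=\mathcal{P}X(X(-,a),\phi)=\bigvee_x \phi(x)\with X(a,x)$ — wait, that is not literally Yoneda; rather $\sub_X(X(-,a),\phi)=\phi(a)$, and separately $\bigvee_x\phi(x)\with X(a,x)$ is the supremum formula. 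So I would instead argue directly: the inequality $\bigvee_x \phi(x)\with X(a,x)\geq \phi(a)\with X(a,a)\geq\phi(a)$ is automatic, while $\phi$ being a weight gives $\phi(x)\with X(a,x)\leq\phi(a)$ for all $x$ (the weight condition $\phi(x)\with X(a,x)\leq\phi(a)$ — note this requires $X(a,x)$ in the slot, which matches since $\phi$ is a \emph{lower} fuzzy set). Hence $\bigvee_x\phi(x)\with X(a,x)=\phi(a)$ \emph{unconditionally}, for every weight $\phi$ and every $a$. Therefore compactness of $a$, which says $X(a,\sup\phi)=\bigvee_x\phi(x)\with X(a,x)=\phi(a)$, is literally equivalent to (2). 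This makes $(1)\Leftrightarrow(2)$ almost immediate once the supremum formula is invoked.

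Next, $(2)\Leftrightarrow(3)$. By definition $w(x,a)=\bigwedge_{\phi\in\mathcal{D}X}\big(X(a,\sup\phi)\ra\phi(x)\big)$. Assuming (2), i.e. $X(a,\sup\phi)=\phi(a)$ for all forward Cauchy $\phi$, we get $w(x,a)=\bigwedge_{\phi\in\mathcal{D}X}(\phi(a)\ra\phi(x))$. Taking $\phi=\sy(a)=X(-,a)$ (which lies in $\mathcal{D}X$) gives the upper bound $w(x,a)\leq (X(a,a)\ra X(x,a))\leq X(x,a)$ using $X(a,a)\geq 1$; but actually $w(x,a)\leq X(x,a)$ is already property (i) of $w$ listed before the statement. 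For the reverse inequality $X(x,a)\leq w(x,a)$: for each forward Cauchy $\phi$, since $\phi$ is a weight, $\phi(a)\with X(x,a)\leq\phi(x)$, i.e. $X(x,a)\leq\phi(a)\ra\phi(x)$; taking the meet over $\phi$ yields $X(x,a)\leq w(x,a)$ under assumption (2). So (2) implies $w(-,a)=X(-,a)$. Conversely, assuming (3), i.e. $w(x,a)=X(x,a)$ for all $x$: evaluating the definition of $w$ at $x=a$ gives $X(a,a)=w(a,a)=\bigwedge_\phi(X(a,\sup\phi)\ra\phi(a))$, so for every forward Cauchy $\phi$, $1\leq X(a,a)\leq X(a,\sup\phi)\ra\phi(a)$, hence $X(a,\sup\phi)\leq\phi(a)$; combined with the always-true inequality $\phi(a)=\bigvee_x\phi(x)\with X(a,x)\leq X(a,\sup\phi)$ (the second inequality being $\sup$-preservation direction: $\sup (X(a,-))^\ra(\phi)\le X(a,\sup\phi)$ since $X(a,-)$ always preserves order hence $\mathcal D$-sups lax one way — more simply, $\phi(x)\with X(a,x)\le X(a,x)\with X(x,\sup\phi)\le X(a,\sup\phi)$ using that $X(-,x)\le\sub_X(\phi,X(-,\sup\phi))$, i.e. $\phi(x)\le X(x,\sup\phi)$), we obtain $X(a,\sup\phi)=\phi(a)$, which is (2).

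The main obstacle — really the only place requiring care — is bookkeeping the two one-sided inequalities relating $\phi(a)$, $\bigvee_{x}\phi(x)\with X(a,x)$, and $X(a,\sup\phi)$, and making sure I cite the correct form of the supremum formula from Example~2.11 (equation \eqref{composition as sup}) and the weight/Yoneda identities. In particular I want to state cleanly as a preliminary observation that \emph{for any weight $\phi$ and any $a$, $\bigvee_{x\in X}\phi(x)\with X(a,x)=\phi(a)$}, since this single identity drives both $(1)\Leftrightarrow(2)$ and the $\le$ direction in $(2)\Leftrightarrow(3)$; the rest is formal manipulation of adjunctions and the definition of $w$. I would present the proof as: (i) record the preliminary identity; (ii) deduce $(1)\Leftrightarrow(2)$ from it together with \eqref{composition as sup}; (iii) prove $(2)\Rightarrow(3)$ using the weight inequality and the test weight $\sy(a)$; (iv) prove $(3)\Rightarrow(2)$ by evaluating $w$ at the diagonal.
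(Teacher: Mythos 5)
Your proof is correct, and it is the routine unwinding of the definitions that the paper evidently has in mind, since it states this lemma without proof: the key observation $\bigvee_{x}\phi(x)\with X(a,x)=\phi(a)$ (the co-Yoneda identity for weights) together with Equation (\ref{composition as sup}) gives $(1)\Leftrightarrow(2)$, and your two-sided argument for $(2)\Leftrightarrow(3)$, using the test point $x=a$ in the definition of $w$ and the standing inequality $\phi(a)\leq X(a,\sup\phi)$, is sound.
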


A   $\mathcal{D}$-cocomplete $[0,1]$-ordered set $X$  is  said to be  \emph{algebraic}  if its set $K(X)$ of compact elements is \emph{$\mathcal{D}$-codense} in the sense that for each $x\in X$, there is a forward Cauchy weight $\phi$ of $K(X)$ such that $x$ is a supremum of $i^\ra(\phi)$, where $i$ denotes the inclusion map $K(X)\hookrightarrow X$. In other words, a  $[0,1]$-ordered set $X$  is  {algebraic} if it is $\mathcal{D}$-cocomplete and each of its elements is a Yoneda limit (see Remark \ref{Yoneda limits} for definition) of a forward Cauchy net consisting of compact elements.

\begin{prop}\label{algebraic is continuous}  Every separated  algebraic $[0,1]$-ordered set is a $[0,1]$-domain. Such a $[0,1]$-domain is called an algebraic $[0,1]$-domain. \end{prop}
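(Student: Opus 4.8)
The plan is to verify condition (2) of Proposition~\ref{way below as left adjoint}: I must show that for a separated algebraic $[0,1]$-ordered set $X$ and each $x \in X$, the weight $w(-,x)$ is forward Cauchy and has $x$ as a supremum. Since $X$ is $\mathcal{D}$-cocomplete by hypothesis (algebraicity includes this), this is exactly what it takes to conclude $X$ is a $[0,1]$-domain.

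First I would unpack algebraicity: fix $x \in X$, and choose a forward Cauchy weight $\psi$ of $K(X)$ such that $x = \sup i^\ra(\psi)$, where $i\colon K(X)\hookrightarrow X$ is the inclusion. The key observation is that for a compact element $a \in K(X)$ we have $w(-,a) = X(-,a)$ by Lemma~\ref{characterizing compact element}, so compact elements ``see'' the way-below relation as ordinary $[0,1]$-order. I expect the right candidate for ${\sf d}(x)$ to be $i^\ra(\psi)$ itself — a forward Cauchy weight of $X$ (forward Cauchy weights are preserved by $f^\ra$) with $x$ as a supremum by construction. So the crux is to prove the identity
\[ w(-,x) = i^\ra(\psi). \]
One inclusion should follow from the general inequality $w(-,x) \leq X(-,x)$ together with $i^\ra(\psi) \leq X(-,\sup i^\ra(\psi)) = X(-,x)$ and a comparison using that $i^\ra(\psi)$ is, by Proposition~\ref{ideal}-type reasoning (or directly from the definition of $\sup$), the \emph{smallest} forward Cauchy weight with supremum $x$ among those factoring through compact elements; but I need the genuinely smallest forward Cauchy weight with this supremum to be $w(-,x)$.

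For the reverse direction, I would compute $w(z,x) = \bigwedge_{\phi \in \mathcal{D}X} X(x,\sup\phi)\ra\phi(z)$ directly and show it equals $i^\ra(\psi)(z) = \bigvee_{a \in K(X)} \psi(a)\with X(z,a)$. Evaluating the defining infimum at $\phi = i^\ra(\psi)$ gives $w(z,x) \leq X(x,x)\ra i^\ra(\psi)(z) = i^\ra(\psi)(z)$, which is the easy half. For $w(z,x) \geq i^\ra(\psi)(z)$, I would use property (iii) of the way-below relation, $X(a,x)\with w(z,a) \leq w(z,x)$, applied with $a \in K(X)$; since $a$ is compact, $w(z,a) = X(z,a)$, so $X(a,x)\with X(z,a) \leq w(z,x)$. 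It then remains to show $\psi(a) \leq X(a,x)$ for each $a \in K(X)$, i.e. that each $a$ in the support of $\psi$ lies below $x = \sup i^\ra(\psi)$ to the degree $\psi(a)$; this is a Yoneda-lemma computation: $X(a, \sup i^\ra(\psi)) = \CP X(i^\ra(\psi), X(-,a)) \geq$ (evaluate the inclusion $\sub_X$ bound) $\ldots$, and one checks $i^\ra(\psi)(a) \geq \psi(a)$ since $i^\ra(\psi)(a) = \bigvee_b \psi(b)\with X(a,b) \geq \psi(a)\with X(a,a) \geq \psi(a)$, combined with the supremum inequality $X(a,\sup i^\ra(\psi)) \geq \ldots$. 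Taking the join over $a \in K(X)$ then yields $i^\ra(\psi)(z) = \bigvee_a \psi(a)\with X(z,a) \leq \bigvee_a X(a,x)\with X(z,a) \leq w(z,x)$.

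The main obstacle I anticipate is the inequality $\psi(a) \leq X(a,x)$, or more precisely pinning down the exact relationship between the weight $\psi$ on $K(X)$, its extension $i^\ra(\psi)$, and the supremum $x$; this requires careful use of the adjunction $\sup \dashv \sy$ and the Yoneda lemma, and one must be attentive that $\psi$ being forward Cauchy (inhabited and irreducible, by Proposition~\ref{FC=irr}) is what guarantees the join $\bigvee_a \psi(a)\with X(-,a)$ behaves well — in particular that $i^\ra(\psi)$ is itself forward Cauchy, which I would cite from the fact that $\mathcal{D}$ is a subfunctor of $\mathcal{P}$ closed under $f^\ra$. Once $w(-,x) = i^\ra(\psi)$ is established, forward Cauchyness of $w(-,x)$ and $\sup w(-,x) = x$ are immediate, and Proposition~\ref{way below as left adjoint} finishes the proof with ${\sf d}(x) = w(-,x)$.
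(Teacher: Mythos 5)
Your proof is correct. The paper itself gives no argument here beyond a pointer to the analogous \cite[Proposition 5.4]{LiZ18b}, and your computation supplies exactly the missing details: the identity $w(-,x)=i^\ra(\psi)$ is established by instantiating the defining meet at $\phi=i^\ra(\psi)$ (using $X(x,x)=1$ and $\sup i^\ra(\psi)=x$) for the inequality $w(-,x)\le i^\ra(\psi)$, and for the reverse by combining $\psi(a)\le i^\ra(\psi)(a)\le X(a,x)$, the compactness identity $w(-,a)=X(-,a)$ from Lemma \ref{characterizing compact element}, and property (iii) of $w$; Proposition \ref{way below as left adjoint} then applies since algebraicity already includes $\mathcal{D}$-cocompleteness. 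The tentative remarks in your middle paragraph about needing $w(-,x)$ to be ``the smallest forward Cauchy weight with supremum $x$'' are superfluous and can be deleted --- the direct two-sided computation that follows is self-contained.
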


\begin{proof}Similar to the proof of \cite[Proposition 5.4]{LiZ18b}. \end{proof}

\begin{cor}\label{algebraicity of dr} For a continuous t-norm $\with$ on $[0,1]$, the following conditions are equivalent: \begin{enumerate}[label=\rm(\arabic*)]\setlength{\itemsep}{0pt}
\item $\with$ is Archimedean; that is $([0,1],\with,1)$ has no idempotent element other than $0$ and $1$. \item  The complete $[0,1]$-lattice $([0,1],\alpha_L)$ is algebraic. \item The complete $[0,1]$-lattice $([0,1],\alpha_R)$ is algebraic.\end{enumerate} \end{cor}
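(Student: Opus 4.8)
The plan is to describe the compact elements of $([0,1],\alpha_L)$ and of $([0,1],\alpha_R)$ — both complete $[0,1]$-lattices, with underlying lattices $[0,1]$ and $[0,1]^{\mathrm{op}}$ — and then read off where the idempotents of $\with$ sit. By Corollary \ref{Y=S} together with Proposition \ref{ideal} (whose point is that the forward Cauchy weights of a complete $[0,1]$-lattice $Y$ are exactly the weights $\bigvee_{x\in I}Y(-,x)$ attached to ideals $I$ of $Y_0$), an element $a$ is compact in $([0,1],\alpha_L)$ iff the monotone map $(a\to-)\colon[0,1]\to[0,1]$ satisfies $a\to t=\bigvee_{s<t}(a\to s)$ for all $t\in(0,1]$, and $a$ is compact in $([0,1],\alpha_R)$ iff the monotone map $(-\to a)\colon[0,1]^{\mathrm{op}}\to[0,1]$ satisfies $t\to a=\bigvee_{s>t}(s\to a)$ for all $t\in[0,1)$; in each case this is a one-sided continuity condition that can fail only where the map $u\mapsto u\with b$ (one argument fixed) is constant at a positive value on a nondegenerate interval. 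By Proposition \ref{idempotent}, if $q<1$ is idempotent and $0\neq a\le q$ then $u\with a=\min\{u,a\}=a$ for all $u\in[q,1]$, so $u\mapsto u\with a$ is constant $=a$ on $[q,1]$; examining the block of $a$ by Theorem \ref{ordinal sum} one checks that the relevant continuity condition then genuinely fails, so $a$ is not compact. Hence the \textbf{key lemma}: if $a$ is compact in $([0,1],\alpha_L)$ or in $([0,1],\alpha_R)$ and $a\neq 0$, then $a$ is strictly greater than every idempotent element of $\with$ that is $<1$. Establishing this lemma is the main obstacle: it is an elementary but somewhat lengthy case analysis on the ordinal-sum blocks, in which product blocks (where $u\mapsto u\with a$ is strictly increasing on the interval that matters) must be separated from {\L}ukasiewicz and $\min$ blocks, and the cases $a$ idempotent and $a$ non-idempotent handled via $q=a$ and $q=a^{+}$ respectively.

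For $(1)\Rightarrow(2)$ and $(1)\Rightarrow(3)$, assume $\with$ is Archimedean; by Theorem \ref{ordinal sum} it is then isomorphic to $\with_{\L}$ or to $\with_P$, so it suffices to treat these two quantales. For $\with_{\L}$ the implication operator is continuous on $[0,1]^2$, hence $(a\to-)$ and $(-\to a)$ are continuous, so Scott continuous, for every $a$; thus every element of $([0,1],\alpha_L)$ and of $([0,1],\alpha_R)$ is compact, and the spaces are algebraic because $K(X)=X$ (each $x$ is the supremum of $i^\ra(\sy_X(x))$). For $\with_P$ the same observation makes every element of $([0,1],\alpha_L)$ compact and every $a>0$ compact in $([0,1],\alpha_R)$; the only point of $([0,1],\alpha_R)$ left to account for is $0$, and I would show it is the supremum of $i^\ra(\phi)$ for the forward Cauchy weight $\phi$ of $K(([0,1],\alpha_R))=(0,1]$ carried by any net $k_i\downarrow 0$ in $(0,1]$: using that the supremum of a weight $\psi$ of $([0,1],\alpha_R)$ equals $\bigwedge_z(\psi(z)\to z)$, a short computation gives $\sup i^\ra(\phi)=0$. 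So $K(X)$ is $\mathcal{D}$-codense in both cases.

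For $(2)\Rightarrow(1)$ and $(3)\Rightarrow(1)$ I argue by contraposition. Suppose $\with$ has an idempotent $p\in(0,1)$, and let $X$ be $([0,1],\alpha_L)$ or $([0,1],\alpha_R)$; by the key lemma $K(X)\subseteq\{0\}\cup(p,1]$. Since $k\to 0=0$ whenever $k>p$ (Proposition \ref{idempotent}), a forward Cauchy net $\{k_i\}$ with values in $\{0\}\cup(p,1]$ cannot have both $0$ and elements of $(p,1]$ occurring cofinally, hence is eventually $0$ (with Yoneda limit $0$) or eventually in $(p,1]$; in the latter case, for $\alpha_L$ the weight $\phi$ of the net has $\phi(z)=1$ for all $z\le p$ (because $z\to k_j=1$ when $z\le p<k_j$), so $\sup\phi=\bigvee\Lambda(\phi)\ge p$ by Proposition \ref{ideal}, whereas for $\alpha_R$, evaluating the defining identity of the Yoneda limit $m$ at $y=p$ yields $\alpha_R(m,p)=p\to m=\bigvee_i\bigwedge_{j\ge i}(p\to k_j)=1$, so again $p\le m$. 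Therefore every supremum $\sup i^\ra(\phi)$ with $\phi$ a forward Cauchy weight of $K(X)$ lies in $\{0\}\cup[p,1]$, which omits $p/2$; hence $K(X)$ is not $\mathcal{D}$-codense and $X$ is not algebraic. (For the $([0,1],\alpha_L)$ half one may shortcut: if $\with$ fails condition (S) then $([0,1],\alpha_L)$ is not even a $[0,1]$-domain by Example \ref{continuity of dl}, hence not algebraic by Proposition \ref{algebraic is continuous}, and a t-norm failing (S) is automatically non-Archimedean.) Combining these implications yields the equivalence of $(1)$, $(2)$ and $(3)$.
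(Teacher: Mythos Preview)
Your argument is correct and considerably more detailed than the paper's one-line proof, which simply invokes Example~\ref{dr is continuous}, Example~\ref{continuity of dl}, Lemma~\ref{characterizing compact element}, and Proposition~\ref{algebraic is continuous} and leaves the rest to the reader. The difference is mainly one of bookkeeping: you characterize compactness via the defining condition (Scott continuity of $X(a,-)$, i.e.\ Lemma~\ref{characterizing compact element}\,(1)), which for $\alpha_L$ amounts to left continuity of $t\mapsto a\to t$ and for $\alpha_R$ to right continuity of $t\mapsto t\to a$; the paper's intended route instead uses Lemma~\ref{characterizing compact element}\,(3) together with the explicit formulas for ${\sf d}(t)=w(-,t)$ already computed in Examples~\ref{dr is continuous} and~\ref{continuity of dl}, which for $\alpha_R$ reads $t$ compact iff $t\to x=\bigvee_{b>t}(b\to x)$ for all $x$, and for $\alpha_L$ (under condition~(S)) reads $t$ compact iff $x\to t=\bigvee_{b<t}(x\to b)$ for all $x$. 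These are the same one-sided continuity conditions on $\to$ with the roles of the two arguments swapped, and the ensuing case analysis on the ordinal-sum blocks is essentially identical. What the paper's route buys is that the ${\sf d}$ formulas are already on the page, so no re-derivation is needed; what your route buys is that it does not presuppose continuity of the lattice (useful for the $\alpha_L$ case when (S) fails, which you then handle separately via Proposition~\ref{algebraic is continuous} anyway). Either way the substance---identifying $K(X)$ via continuity of $\to$ and showing $\mathcal D$-codensity fails exactly when a nontrivial idempotent exists---is the same.
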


\begin{proof} This follows from a combination of Example \ref{dr is continuous}, Example  \ref{continuity of dl}, Lemma \ref{characterizing compact element} and Proposition \ref{algebraic is continuous}, details are left to the reader.
\end{proof}

We write \[[0,1]\text{-}{\sf ConLat}\] for the category with \begin{itemize}\setlength{\itemsep}{0pt} \item  objets:  continuous $[0,1]$-lattices; \item morphisms:    $[0,1]$-Scott continuous right adjoints. \end{itemize}

If the continuous t-norm $\with $ satisfies the condition (S),  then $\conlat$ is monadic over the category of sets (see  Theorem \ref{CL is monadic over Set} below).  In particular, $\conlat$ is a complete category. For an arbitrary continuous t-norm $\with$, we do not know  whether $[0,1]\text{-}{\sf ConLat}$ is monadic over sets, however, the following proposition says that it is  a complete category.

\begin{prop} \label{product}
The category $\conlat$ is complete.
\end{prop}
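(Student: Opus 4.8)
The plan is to prove that $\conlat$ is complete by showing it has all small products and equalizers, and then arguing that the limit in $\conlat$ agrees with the limit computed in the ambient category $[0,1]\text{-}\sf Ord^\uparrow$ (or in $[0,1]\text{-}\sf DCPO$). The key structural fact to exploit is that a continuous $[0,1]$-lattice is a complete $[0,1]$-lattice (so it lives in $\QSup$, which is complete) whose underlying order is a continuous lattice (Proposition \ref{underlying [0,1]-CL}), and that the extra ``continuity'' data is captured by the left adjoint ${\sf d}\dashv\sup$, equivalently by the formula in Proposition \ref{chacl}(2). So first I would pin down how limits are formed: for a family $\{X_i\}_{i\in I}$ of continuous $[0,1]$-lattices, form the product $\prod_i X_i$ in $[0,1]\text{-}\sf Ord$, with $\prod_i X_i(\vec x,\vec y)=\bigwedge_i X_i(x_i,y_i)$. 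This is a complete $[0,1]$-lattice (a product of complete $[0,1]$-lattices in $\QSup$), suprema of weights are computed coordinatewise, and forward Cauchy weights of $\prod_i X_i$ push forward along each projection $\pi_i$ to forward Cauchy weights of $X_i$; hence $\sup\phi = (\sup \pi_i^\ra(\phi))_i$. Then I would verify condition (2) of Proposition \ref{chacl} for $\prod_i X_i$: using that the way below relation in $(\prod_i X_i)_0 = \prod_i (X_i)_0$ is the coordinatewise way below relation ``eventually'' — more precisely, $\vec y \ll \vec x$ in the product of continuous lattices iff $y_i \ll x_i$ for all $i$ and $y_i = \bot_i$ for all but finitely many $i$ (the standard description for products of continuous lattices, Gierz et al.). Plugging this in, $\bigwedge_{\vec y \ll \vec x}\phi(\vec y) = \bigwedge_i \bigwedge_{y\ll x_i}\pi_i^\ra(\phi)(y) = \bigwedge_i X_i(x_i,\sup\pi_i^\ra(\phi)) = \prod_i X_i(\vec x,\sup\phi)$, which is exactly what is required. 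One must be slightly careful that the ``finitely many coordinates at bottom'' constraint does not spoil the meet; since $\phi$ is a weight and $\pi_i^\ra(\phi)(\bot_i)\geq$ its value at way-below elements, and the meet over way-below elements of $X_i$ already includes $\bot_i$, the finite-support restriction can only enlarge the meet back to the full coordinatewise meet, so equality holds. That gives products.

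Next I would handle equalizers. Given a parallel pair $f,g\colon X\rightrightarrows Y$ of $[0,1]$-Scott continuous right adjoints between continuous $[0,1]$-lattices, I want the equalizer in $\conlat$. The natural candidate is $E=\{x\in X\mid f(x)=g(x)\}$ with the $[0,1]$-order inherited from $X$. The point is that $E$ is closed under $\mathcal{D}$-suprema computed in $X$: if $\phi$ is a forward Cauchy weight of $E$, then $i^\ra(\phi)$ (for $i\colon E\hookrightarrow X$) is forward Cauchy in $X$, and since $f,g$ are $[0,1]$-Scott continuous, $f(\sup i^\ra(\phi)) = \sup f^\ra(i^\ra(\phi)) = \sup (f\circ i)^\ra(\phi) = \sup (g\circ i)^\ra(\phi) = g(\sup i^\ra(\phi))$, so $\sup i^\ra(\phi)\in E$; hence $E$ is $\mathcal{D}$-cocomplete with suprema inherited from $X$, and the inclusion is $[0,1]$-Scott continuous. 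Moreover $E$ should be shown to be a retract of $X$ in $[0,1]\text{-}\sf Ord$, or at least a complete $[0,1]$-sub-lattice: since $f,g$ are right adjoints, $E$ is the equalizer in $\QSup$ as well, so $E$ is a complete $[0,1]$-lattice (using that $\QSup$ is complete and the forgetful functor to $[0,1]\text{-}\sf Ord$ creates the relevant limit, or Proposition \ref{retractcomplete}). Then $E_0$ is a complete lattice that is a $\mathcal{D}$-cocomplete, Scott-closed-under-suprema subset of the continuous lattice $X_0$; I would invoke the standard fact that such a sub-dcpo which is also closed under the relevant meets is again continuous (equivalently, check Proposition \ref{chacl}(2) directly for $E$ using that suprema and the relevant structure are inherited, and that the way below relation of $E_0$ is controlled by that of $X_0$ via $w_E(-,x)\geq w_X(-,x)|_E$). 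The universal property of $E$ in $\conlat$ is then immediate: any $[0,1]$-Scott continuous right adjoint $h\colon Z\to X$ with $fh=gh$ factors uniquely through $E$, and the factorization is again a $[0,1]$-Scott continuous right adjoint because $E\hookrightarrow X$ reflects these properties.

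Having products and equalizers, completeness of $\conlat$ follows by the standard theorem that a category with all small products and all equalizers is complete, and the limit cone is built in the usual way as an equalizer of two maps between products; I should just note that the morphisms involved are indeed in $\conlat$ (i.e., $[0,1]$-Scott continuous right adjoints), which follows from the explicit descriptions above. The main obstacle I anticipate is verifying the ``continuity'' condition Proposition \ref{chacl}(2) for the equalizer $E$ — the product case reduces cleanly to the coordinatewise description of $\ll$ in a product of continuous lattices, but for a sub-object $E$ the way below relation of $E_0$ need not be the restriction of that of $X_0$, so one has to argue carefully (probably via the left adjoint ${\sf d}_E$ obtained by restricting/corestricting ${\sf d}_X$, using that $E$ is a retract and that retracts of continuous $[0,1]$-lattices in the appropriate category are again continuous $[0,1]$-lattices — an analogue of Proposition \ref{retractcomplete} for the continuous setting, which I would either cite or prove in passing). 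An alternative, possibly cleaner route that avoids equalizers entirely is to show $\conlat$ is closed in $[0,1]\text{-}\sf DCPO$ under products and under the formation of all limits by exhibiting continuous $[0,1]$-lattices as the algebras for an idempotent-like situation, but given the excerpt only guarantees monadicity under condition (S), I would stick with the products-plus-equalizers argument for the general continuous t-norm.
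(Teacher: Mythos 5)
Your overall strategy---products plus equalizers, with the product case handled via the coordinatewise description of the way-below relation in a product of continuous lattices and condition (2) of Proposition \ref{chacl}---is exactly the paper's. The product computation you sketch is the one carried out there: the ``finite support'' subtlety you flag is resolved by an explicit two-inequality argument, after using Proposition \ref{ideal} to replace forward Cauchy weights by ideals of the underlying order, so that part of your proposal is sound.

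The equalizer step is where you stop short. You correctly identify $E=\{x\in X\mid f(x)=g(x)\}$, correctly show it is closed under $\mathcal{D}$-suprema, and correctly diagnose that the real issue is continuity of $E$ (since $\ll$ in $E_0$ need not restrict from $X_0$), but you leave the production of the retraction $X\lra E$ as an acknowledged to-do with several tentative routes. The paper closes this gap in one move: since $f,g$ are right adjoints, $E_0$ is closed under arbitrary meets (and directed joins) in $X_0$, so the inclusion $i\colon E_0\lra X_0$ is a right adjoint of posets; as $i$ also preserves the $[0,1]$-order, Theorem \ref{Characterization of adjoints} upgrades $i\colon E\lra X$ to a right adjoint of $[0,1]$-ordered sets. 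Its left adjoint $r$ satisfies $r\circ i=1_E$ and, being a left adjoint between complete $[0,1]$-lattices, preserves all suprema of weights and hence is $[0,1]$-Scott continuous. Thus $E$ is a retract of $X$ in $[0,1]\text{-}{\sf Ord}^\uparrow$, and the retract-closure fact you wanted to invoke is exactly what the paper records just before Lemma \ref{cc} (a consequence of Proposition \ref{retractcomplete} and Proposition 3.4 of \cite{LZ2020}); this is Lemma \ref{cc}(ii). Note also that your side remark that ``$E$ is the equalizer in $\QSup$'' is off: the morphisms of $\QSup$ are left adjoints, while $f,g$ are right adjoints, so $E$ is not an equalizer in $\QSup$; completeness of $E$ should instead be extracted from the retraction just described (via Proposition \ref{retractcomplete}).
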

We make some preparations before proving this conclusion. First of all, we note that  from Proposition \ref{retractcomplete} and \cite[Proposition 3.4]{LZ2020} it follows that every retract of a $[0,1]$-domain (continuous $[0,1]$-lattice, resp.) in the category $[0,1]\text{-}\sf Ord^\uparrow$ is a $[0,1]$-domain (continuous $[0,1]$-lattice, resp.).

\begin{lem}\label{cc}
Suppose that $X,Y$ are separated $[0,1]$-ordered sets, and that $g \colon  X\lra Y$  is a $[0,1]$-Scott continuous right adjoint.
\begin{enumerate}[label=\rm(\roman*)] \setlength{\itemsep}{0pt}
\item If $g$ is surjective and  $X$ is a
  continuous $[0,1]$-lattice, then so is  $Y$.
\item If $g$ is injective  and  $Y$ is
 a continuous $[0,1]$-lattice, then  so is $X$.
\end{enumerate}
\end{lem}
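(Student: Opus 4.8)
The plan is to reduce both statements to known transfer results for the weaker structures (complete $[0,1]$-lattices, $[0,1]$-domains) together with the observation that a surjective (resp. injective) $[0,1]$-Scott continuous right adjoint exhibits $Y$ (resp. $X$) as a \emph{retract} in $[0,1]\text{-}\sf Ord^\uparrow$ of the continuous $[0,1]$-lattice on the other side. For (i): since $g\colon X\lra Y$ is a right adjoint, it has a left adjoint $f\colon Y\lra X$ with $f\dashv g$; as $g$ is surjective, $g\circ f=\id_Y$ (this is the standard fact that the counit of an adjunction with fully faithful — here surjective-on-objects in a posetal setting — right adjoint is an isomorphism; concretely $g f(y)\le y$ and $y\le g f(y)$ both follow from $g$ surjective and the triangle identities). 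Thus $Y$ is a retract of $X$ via $f$ (section) and $g$ (retraction). Both $f$ and $g$ are $[0,1]$-Scott continuous: $g$ by hypothesis, and $f$ because left adjoints always preserve all suprema of weights, in particular $\mathcal{D}$-suprema. Hence $Y$ is a retract of the continuous $[0,1]$-lattice $X$ in the category $[0,1]\text{-}\sf Ord^\uparrow$, and by the remark preceding this lemma (combining Proposition \ref{retractcomplete} with \cite[Proposition 3.4]{LZ2020}), every such retract is again a continuous $[0,1]$-lattice; therefore $Y$ is a continuous $[0,1]$-lattice.

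For (ii): now $g\colon X\lra Y$ is injective, so we want to exhibit $X$ as a retract of $Y$ with $g$ as the section. This needs a $[0,1]$-Scott continuous map $r\colon Y\lra X$ with $r\circ g=\id_X$. The natural candidate is $r=\sup_X\circ g^{\leftarrow}\colon Y\lra \mathcal{D}X \lra X$ — more precisely, $r(y)=\sup_X\big(w_X(-,x_0)\big)$ will not be quite it; instead one should use that $g$, being a right adjoint, \emph{preserves} all infima of coweights, and, being injective and $[0,1]$-Scott continuous into a continuous (hence $\mathcal{D}$-cocomplete) $[0,1]$-lattice $Y$, that $X$ is forced to be $\mathcal{D}$-cocomplete with $g$ preserving and reflecting $\mathcal{D}$-suprema. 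Concretely: the left adjoint $f\colon Y\lra X$ of $g$ satisfies $f\circ g\ge \id_X$ in the underlying order, and $g$ injective together with $g f g = g$ gives $f g=\id_X$; so $X$ is a retract of $Y$ via $g$ (section) and $f$ (retraction), both of which are $[0,1]$-Scott continuous for the same reason as above. Again invoking the retract-stability of continuous $[0,1]$-lattices in $[0,1]\text{-}\sf Ord^\uparrow$, we conclude $X$ is a continuous $[0,1]$-lattice.

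The only delicate point — and the step I expect to be the real obstacle — is verifying that the adjoint pair interacts correctly with the \emph{surjectivity/injectivity} hypothesis to force $gf=\id$ (resp. $fg=\id$) \emph{on the nose}, not merely up to the underlying-order equivalence, so that the retraction diagrams commute strictly in $[0,1]\text{-}\sf Ord^\uparrow$. In the posetal/separated setting this is automatic: in (i), $g$ surjective gives for each $y$ some $x$ with $g(x)=y$, whence $y=g(x)\le g f g(x)=g f(y)$ by the unit $\id_X\le gf$ applied at $x$ and monotonicity, while $gf(y)\le y$ is the counit; separatedness of $Y$ then yields $gf(y)=y$. Dually in (ii), $g$ injective together with $g f g=g$ (a triangle identity) forces $fg(x)=x$ after applying $g$ and using injectivity. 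Once this strictness is pinned down, the rest is just citing Proposition \ref{retractcomplete}, \cite[Proposition 3.4]{LZ2020}, and the fact that left adjoints preserve weighted suprema (hence are $[0,1]$-Scott continuous), and the proof is complete.
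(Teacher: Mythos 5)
Your proposal is correct and follows essentially the same route as the paper: take the left adjoint $f$ of $g$, show $g\circ f=1_Y$ (using surjectivity and separatedness) resp. $f\circ g=1_X$ (using injectivity and $gfg=g$), note that $f$ is $[0,1]$-Scott continuous because left adjoints preserve all existing suprema of weights, and invoke the retract-stability of continuous $[0,1]$-lattices in $[0,1]\text{-}{\sf Ord}^\uparrow$ stated just before the lemma. (Only a cosmetic slip: in case (i) the inequality $y\leq gf(y)$ is just the unit and needs no surjectivity, while $gf(y)\leq y$ is where surjectivity plus the counit enter; you have the two justifications swapped, but the argument goes through.)
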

\begin{proof}
(i) Let $f\colon Y\lra X$ be the left adjoint of $g$. Then $g\circ f=1_Y$. Thus,  as a retract of $X$ in $[0,1]\text{-}\sf Ord^\uparrow$, $Y$ is a continuous $[0,1]$-lattice.

(ii) Let $f\colon Y\lra X$ be the left adjoint of $g$. Then $f\circ g=1_X$. Thus, as a retract of $Y$ in $[0,1]\text{-}\sf Ord^\uparrow$, $X$ is a continuous $[0,1]$-lattice. \end{proof}

We say that a $[0,1]$-order-preserving map $k\colon X\lra X$ is a \emph{kernel operator} if $k^2=k$ and $k(x)\leq x$ in $X_0$ for all $x\in X$. If $k\colon X\lra X$ is a kernel operator, then $k\colon X\lra k(X)$ is right adjoint to the inclusion $k(X)\lra X$, so $k(X)$ is a retract of $X$ in $[0,1]\text{-}{\sf Ord}$. In particular, if $X$ is a continuous $[0,1]$-lattice and $k\colon X\lra X$ is a $[0,1]$-Scott continuous kernel operator, then  $k\colon X\lra k(X)$ is a $[0,1]$-Scott continuous right adjoint, hence $k(X)$ is a continuous $[0,1]$-lattice.

\begin{proof}[Proof of Proposition \ref{product}] We check that $\conlat$ has equalizers and products.  Given a parallel pair of morphisms $f,g\colon  X\lra Y$  in $\conlat$, let \[Z=\{x\in X\mid f(x)=g(x)\}.\] The inclusion $i\colon  Z \lra X $ is clearly   $[0,1]$-Scott continuous. Since $i\colon  Z_0\lra X_0$ is an equalizer of the pair $f,g\colon X_0\lra Y_0$ in the category of continuous lattices, it is a right adjoint between partially ordered sets. Thus, by Theorem \ref{Characterization of adjoints} we obtain that  $i\colon  Z \lra X $ is a $[0,1]$-Scott continuous right adjoint. Therefore,  $Z$ is a continuous $[0,1]$-lattice by the above lemma, so $i\colon  Z \lra X $ is an equalizer of $f,g\colon X\lra Y$ in $\conlat$.

To see that $\conlat$ has products, we check that
for each family $\{X_i\}_{i\in I}$ of continuous $[0,1]$-lattices, the product   $\prod_iX_i$ is a continuous $[0,1]$-lattice.

First of all, we note that by Proposition \ref{ideal},     the equality in condition (2) in Proposition \ref{chacl} is equivalent to the requirement that for each $x$ of $X$ and each ideal $D$ of $X_0$,
\begin{equation} \label{continuity via ideal}
	 X(x,{\textstyle \bigvee} D)=\bigwedge_{y\ll x}\bigvee_{d\in D}X(y,d).
\end{equation}
So,  we only need to  check that for each $\vec{x}=(x_i)_i\in\prod_i X_i$ and each ideal $D$ in the underlying order of $\prod_i X_i$,
\begin{equation} \label{L=R}\bw_i X_i(x_i,a_i)=\bigwedge_{\vec{y}\ll \vec{x}}\bigvee_{\vec{d}\in D}\bw_i X_i(y_i,d_i),\end{equation}
where $\vec{a}=(a_i)_i$ is the join of $D$ in the underlying order of $\prod_iX_i$.

Assume that $r<\bw_i X_i(x_i,a_i)$ and $\vec{y}\ll \vec{x}$. We are to find an element $\vec{c}$ of $D$ such \(r\leq\bw_i X_i(y_i,c_i)\), which proves the inequality $\leq$ in   (\ref{L=R}).
Write $\bot$ for the bottom element in the underlying order of each $X_i$. Since $\vec{y}\ll \vec{x}$, it follows that $y_i\ll x_i$ for all $i\in I$ and $y_i=\bot$ for all but a  finite number of $i$. If $y_i=\bot$, put $c_i=\bot$. If $y_i\not=\bot$,  because of $X_i$ being a continuous $[0,1]$-lattice,  we have \[X_i(x_i,a_i)\leq\bigvee_{b_i\ll a_i}X_i(y_i,b_i)\]  by  (\ref{continuity via ideal}), hence there is some $b_i\ll a_i$  such that $r\leq X_i(y_i,b_i)$; put $c_i$ to be this $b_i$. Then   $\vec{c}=(c_i)_i$   satisfies the requirement.

For the converse inequality, first  we note   that for each $i$, $a_i$ is the join of the directed set $\{d_i\}_{\vec{d}\in D}$ in the underlying order of $X_i$. Since $X_i$ is a continuous $[0,1]$-lattice, it follows from (\ref{continuity via ideal}) that
$$X_i(x_i,a_i)=\bigwedge_{z\ll x_i}\bigvee_{\vec{d}\in D}X_i(z,d_i).$$
For each $z\ll x_i$, define an element $\vec{y}=(y_j)_j$ by letting $y_i=z$ and $y_j=\bot$ for $j\neq i$. Then $\vec{y}\ll \vec{x}$ and for all $\vec{d}\in D$,
$$\bw_jX_j(y_j,d_j)=X_i(z,d_i),$$
and consequently, \begin{align*}\bw_i X_i(x_i,a_i)&= \bw_i \bigwedge_{z\ll x_i}\bigvee_{\vec{d}\in D}X_i(z,d_i) \geq\bigwedge_{\vec{y}\ll \vec{x}}\bigvee_{\vec{d}\in D}\bw_i X_i(y_i,d_i).\qedhere \end{align*}
\end{proof}

\begin{prop}\label{SCL} For a continuous t-norm $\with$ satisfying the condition (S), a $[0,1]$-ordered set $X$ is a  continuous $[0,1]$-lattice if and only if it is a retract of some power of $([0,1],\alpha_L)$ in the category $[0,1]\text{-}\sf Ord^\uparrow$. \end{prop}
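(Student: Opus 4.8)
The plan is to prove the two implications separately; the hypothesis that $\with$ satisfies the condition (S) enters only in the ``if'' direction, while the ``only if'' direction carries the work. For the ``if'' direction, suppose $X$ is a retract in $[0,1]\text{-}{\sf Ord}^\uparrow$ of a power $\dl^{I}$. Since $\with$ satisfies the condition (S), Example \ref{continuity of dl} tells us that $\dl$ is a continuous $[0,1]$-lattice, hence so is $\dl^{I}$ by Proposition \ref{product}; and a retract in $[0,1]\text{-}{\sf Ord}^\uparrow$ of a continuous $[0,1]$-lattice is again a continuous $[0,1]$-lattice, as observed right before Lemma \ref{cc}. Therefore $X$ is a continuous $[0,1]$-lattice.

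For the ``only if'' direction, let $X$ be a continuous $[0,1]$-lattice with way below $[0,1]$-relation $w$. I would realize $X$ as a retract, in $[0,1]\text{-}{\sf Ord}^\uparrow$, of the power $\dl^{X}$ indexed by the underlying set of $X$, via the maps $e(x)=(w(a,x))_{a\in X}$ and $r((t_a)_{a\in X})=\bigvee_{a\in X}t_a\otimes a$, where $t\otimes a:=\sup(t\with X(-,a))$ denotes the tensor in the cocomplete $[0,1]$-lattice $X$ (so that $X(t\otimes a,z)=t\ra X(a,z)$) and $\bigvee$ is the join in the complete lattice $X_0$. First I would show each component $w(a,-)\colon X\lra\dl$ is $[0,1]$-Scott continuous: it preserves $[0,1]$-order by property (iii) of $w$, and, read as a map $X_0\lra[0,1]$, it preserves directed joins — here one uses $w(a,y)=\bigvee_{b\ll y}X(a,b)$ from Corollary \ref{way-product} together with interpolation of the way below relation in the continuous lattice $X_0$ (Proposition \ref{underlying [0,1]-CL}), so that every $b\ll\bigvee D$ satisfies $b\ll b'\le d$ for some $b'$ and some $d\in D$, hence $b\ll d$. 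By Corollary \ref{Y=S} this yields $[0,1]$-Scott continuity of $w(a,-)$, and hence of $e$.

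Next I would check that $r$ is $[0,1]$-Scott continuous. Each functor $(-)\otimes a\colon\dl\lra X$ is a left adjoint (its right adjoint is $X(a,-)$), hence preserves directed joins, so $r$ preserves the pointwise directed joins of $\dl^{X}$; and $r$ preserves $[0,1]$-order, because the adjunction unit gives $t\le X(a,t\otimes a)$, whence $X(r(\vec s),r(\vec t))=\bigwedge_{a}(s_a\ra X(a,r(\vec t)))\ge\bigwedge_{a}(s_a\ra t_a)$. Thus $r$ is $[0,1]$-Scott continuous by Corollary \ref{Y=S}. Finally I would verify $r\circ e=\id_X$: every weight $\phi$ of $X$ equals $\bigvee_{a}\phi(a)\with X(-,a)$ in $\CP X$, and $\sup\colon\CP X\lra X$ is a left adjoint, so $\sup\phi=\bigvee_{a}\phi(a)\otimes a$; applying this to $\phi=w(-,x)$, which has supremum $x$ because $X$ is a $[0,1]$-domain (Proposition \ref{way below as left adjoint}), gives $r(e(x))=\bigvee_{a}w(a,x)\otimes a=x$. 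As a byproduct $e$ is even a $[0,1]$-order-embedding, since $w(z,x)=1$ whenever $z\ll x$ forces $\bigwedge_{a}(w(a,x)\ra w(a,y))\le\bigwedge_{z\ll x}w(z,y)=X(x,\sup w(-,y))=X(x,y)$ by Proposition \ref{chacl}(2), while the reverse inequality is property (iii) of $w$.

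I expect the main obstacle to be confirming that $e$ and $r$ are genuinely morphisms of $[0,1]\text{-}{\sf Ord}^\uparrow$ — that is, the $[0,1]$-Scott continuity of the components $w(a,-)$ and of $r$ — together with the splitting identity $r\circ e=\id_X$; all three come down, through Corollary \ref{Y=S}, to expressing $\mathcal{D}$-suprema (equivalently directed joins) by means of tensors, joins in $X_0$, and the interpolation property of the way below relation, and the remaining estimates are routine.
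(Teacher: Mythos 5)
Your proof is correct, and the ``only if'' direction takes a genuinely different route from the paper's. The paper factors the retraction abstractly through a chain $X \lhd \mathcal{D}X \lhd \mathcal{P}X \lhd \dl^X$, invoking results from Lai--Zhang (Corollary 3.5 and Theorem 6.4 of the cited paper) for the first two links; in particular the step making $\mathcal{D}X$ a retract of $\mathcal{P}X$ genuinely depends on condition (S), since without (S) the $[0,1]$-ordered set $\mathcal{D}X$ need not even be a complete $[0,1]$-lattice and so, by Proposition \ref{retractcomplete}, cannot be such a retract. You instead exhibit an explicit section--retraction pair $e(x)=(w(a,x))_{a\in X}$ and $r(\vec t\,)=\bigvee_a t_a\otimes a$ between $X$ and $\dl^X$; all the verifications you indicate (the $[0,1]$-Scott continuity of $w(a,-)$, which is also the paper's later Lemma \ref{w(x,-)}; the reduction to underlying directed joins via Corollary \ref{Y=S}; the identity $\sup\phi=\bigvee_a\phi(a)\otimes a$ giving $r\circ e=\id$ from $\sup w(-,x)=x$) go through. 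What each approach buys: the paper's argument is shorter given the imported machinery, while yours is self-contained and, notably, establishes the necessity direction for an \emph{arbitrary} continuous t-norm --- condition (S) is only needed for the converse implication, where it guarantees via Example \ref{continuity of dl} that $\dl$ itself is a continuous $[0,1]$-lattice. It would be worth stating that strengthening explicitly.
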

\begin{proof}  Sufficiency  follows from Example \ref{continuity of dl}, Proposition \ref{product} and the fact that every retract  of a continuous $[0,1]$-lattice in  $[0,1]\text{-}\sf Ord^\uparrow$ is a continuous $[0,1]$-lattice. As for necessity, assume that $X$ is a  continuous $[0,1]$-lattice. By  \cite[Corollary 3.5]{LZ2020} $X$ is a retract of $\mathcal{D}X$ in category $[0,1]\text{-}\sf DCPO$.  Since the inclusion $\mathcal{D}X\lra\mathcal{P}X$ is $[0,1]$-Scott continuous,  $\mathcal{D}X$ is a retract of $\mathcal{P}X$ in $[0,1]\text{-}\sf DCPO$ by  \cite[Theorem 6.4\thinspace(6)]{LZ2020}. Finally, since $\mathcal{P}X$ is a retract of $([0,1],\alpha_L)^X$ in the category $[0,1]\text{-}\sf Sup$, it follows that $X$ is a retract  of  $([0,1],\alpha_L)^X$ in   $[0,1]\text{-}\sf   Ord^\uparrow$. \end{proof}

It is well-known that the category of continuous lattices is monadic over sets, see e.g. \cite{Day,Wyler}. This is still true for continuous $[0,1]$-lattices provided that the continuous t-norm $\&$ satisfies the condition (S).
\begin{thm}\label{CL is monadic over Set} If $\with $ satisfies the condition (S), then the category $\conlat$ is monadic over  the category of sets. \end{thm}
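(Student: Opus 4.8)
The plan is to verify the hypotheses of Beck's (precise) monadicity theorem for the forgetful functor $U\colon\conlat\lra{\sf Set}$: one must exhibit a left adjoint of $U$, check that $U$ reflects isomorphisms, and check that $\conlat$ has, and $U$ preserves, coequalizers of $U$-split pairs (parallel pairs whose image under $U$ admits a split coequalizer in ${\sf Set}$). Reflection of isomorphisms is immediate: if $g\colon X\lra Y$ is a $[0,1]$-Scott continuous right adjoint that is a bijection on underlying sets, then bijectivity forces the unit and counit of $f\dashv g$ (with $f$ the left adjoint of $g$) to be identities, so $f=g^{-1}$; since $g$ is then a $[0,1]$-order isomorphism, $f=g^{-1}$ is again a $[0,1]$-Scott continuous right adjoint, and $g$ is an isomorphism in $\conlat$.

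The construction of the left adjoint is the heart of the matter, and mimics the classical description of the free continuous lattice on a set as its lattice of filters. I would set $F(S)=\mathcal{D}\big(([0,1],\alpha_R)^S\big)$. This is a continuous $[0,1]$-lattice once $\with$ satisfies condition (S): the power $([0,1],\alpha_R)^S$ of the complete $[0,1]$-lattice $([0,1],\alpha_R)$ (Example \ref{dr is continuous}) is a complete $[0,1]$-lattice, $\mathcal{D}$ carries complete $[0,1]$-lattices to complete $[0,1]$-lattices under (S) (Lemma 5.3 and Theorem 6.4 of \cite{LZ2020}), and $\mathcal{D}$ always lands in $[0,1]$-domains. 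The unit $\eta_S$ sends $s\in S$ to $\sy(e_s)$, where $e_s\in([0,1],\alpha_R)^S$ is the function equal to $1$ at $s$ and to $0$ elsewhere. Given $X\in\conlat$ and $h\colon S\lra|X|$, let $\lambda_h\colon X\lra([0,1],\alpha_R)^S$ be the map $x\mapsto X(x,h(-))$; each coordinate $X(-,h(s))\colon X\lra([0,1],\alpha_R)$ is a left adjoint — $X$ is cocomplete — so $\lambda_h$ is a left adjoint. Let $\nu_h\colon([0,1],\alpha_R)^S\lra X$ denote its right adjoint, and put $\overline h\coloneqq\sup_X\circ\,\nu_h^\ra\colon F(S)\lra X$, the $[0,1]$-Scott continuous extension of $\nu_h$ along $\sy\colon([0,1],\alpha_R)^S\lra F(S)$; then $\overline h\circ\eta_S=h$. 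The crucial point is that $\overline h$ is a morphism of $\conlat$: on the one hand it is a composite of the left adjoints $\nu_h^\ra$ and $\sup_X$, hence $[0,1]$-Scott continuous; on the other hand $\nu_h^\ra$ coincides with $\lambda_h^\leftarrow$, a right adjoint, and $\sup_X$ is a right adjoint precisely because $X$, being a continuous $[0,1]$-lattice, carries the left adjoint ${\sf d}_X$ of $\sup_X$ (Proposition \ref{way below as left adjoint}), so $\overline h$ is also a composite of right adjoints. Uniqueness of $\overline h$ follows since $F(S)$ is generated from $\eta_S(S)$ under weighted meets and $\mathcal{D}$-suprema (every $\psi\in([0,1],\alpha_R)^S$ is a weighted meet of the $e_s$, the Yoneda embedding preserves weighted meets, and every forward Cauchy weight is a $\mathcal{D}$-supremum of representables), while every morphism of $\conlat$ preserves weighted meets, being a right adjoint, as well as $\mathcal{D}$-suprema.

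For a $U$-split pair $f,g\colon X\rightrightarrows Y$ in $\conlat$ with split coequalizer $q\colon|Y|\lra Z$ in ${\sf Set}$, one transports structure along $q$: the splitting data endow $Z$ with a continuous $[0,1]$-lattice structure for which $q$ becomes a $[0,1]$-Scott continuous right adjoint, and $q$ is then the coequalizer in $\conlat$, evidently preserved by $U$. I expect this transport-of-structure step to be the main obstacle; it is the $[0,1]$-enriched analogue of the computations behind Day's and Wyler's identification of the algebras of the filter monad on ${\sf Set}$ with the continuous lattices (\cite{Day,Wyler}). Finally, it deserves emphasis that condition (S) is used solely to guarantee that $\mathcal{D}$ preserves complete $[0,1]$-lattices — equivalently, that $F(S)$ above is a continuous $[0,1]$-lattice — and that this is exactly the point at which the construction of the left adjoint would break down for a general continuous t-norm.
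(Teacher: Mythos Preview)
Your proposal is correct and shares the paper's overall strategy (Beck's theorem), but you obtain the left adjoint by a genuinely different route. The paper does not construct the free object at all: it simply observes that, under condition (S), the forgetful functor $\conlat\to[0,1]\text{-}{\sf Ord}$ is monadic (Corollary 5.6 and Theorem 6.4 of \cite{LZ2020}), hence a right adjoint, and then composes with the obvious right adjoint $[0,1]\text{-}{\sf Ord}\to{\sf Set}$ to conclude that $U$ has a left adjoint. Your explicit description $F(S)=\mathcal{D}\big(([0,1],\alpha_R)^S\big)$ is more informative---it identifies the free continuous $[0,1]$-lattice on a set and makes transparent exactly where (S) enters---at the cost of a longer verification. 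Note that your uniqueness argument, though terse, does go through: every $\psi\in([0,1],\alpha_R)^S$ equals $\bigwedge_{s}\big(\psi(s)\rightarrowtail e_s\big)$ (cotensor in $([0,1],\alpha_R)$ is $p\rightarrowtail y=p\with y$, and meets in the underlying order are pointwise suprema), so $g\circ\sy$ and $\nu_h$, both limit-preserving, agree on all of $([0,1],\alpha_R)^S$, and the free property of $\mathcal{D}$ then forces $g=\overline h$.

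For the $U$-split coequalizers you and the paper proceed identically in spirit; the paper fills in the step you flag as ``the main obstacle'' via a kernel-operator argument (Lemma~\ref{cong on CL} and Proposition~\ref{creates split coequalizer}): the equivalence relation induced by $q$ is closed under directed joins, meets, and cotensors in $Y_0\times Y_0$, so the map $k(y)=\bigwedge\{y'\mid q(y)=q(y')\}$ is a $[0,1]$-Scott continuous kernel operator, and $k(Y)$ supplies the continuous $[0,1]$-lattice structure on $Z$. This is precisely the enriched analogue of the Day--Wyler computation you allude to.
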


\begin{proof}Since $\with $ satisfies the condition (S),  it follows from Corollary 5.6 and Theorem 6.4 in \cite{LZ2020} that the forgetful functor \(\conlat\lra [0,1]\text{-}{\sf Ord}\) is monadic, hence a right adjoint. It is clear that the forgetful functor $[0,1]\text{-}{\sf Ord}\lra {\sf Set}$ is also a right adjoint, so the forgetful functor $\conlat\lra {\sf Set}$, as a composite of right adjoints, is a right adjoint. It remains, by Beck's theorem (see e.g. \cite[page 151]{MacLane1998}),  to check that the forgetful functor \(\conlat\lra {\sf Set}\) creates split coequalizers. This is contained in Proposition \ref{creates split coequalizer} below. \end{proof}

Let $X$ be a $[0,1]$-ordered set. Given $y\in X$   and let $p\in[0,1]$, a \emph{cotensor of $p$ and $y$}
(see e.g. \cite[page 288]{Stubbe2006}) is  an element   $p\rightarrowtail y$ of $X$   such that   for all $x\in X$, \[X(x,p\rightarrowtail y)=p\ra X(x,y).\] Some useful facts about cotensors are listed below: \begin{itemize} \setlength{\itemsep}{0pt} \item  If $X$ is a  complete $[0,1]$-lattice, then   all cotensors $p\rightarrowtail y$ exist. \item  Every right adjoint $f$ preserves cotensor; that is, $f(p\rightarrowtail y)$ is a cotensor of $p$ and $f(y)$. \item A map $f\colon X\lra Y$ between complete $[0,1]$-lattices preserves the $[0,1]$-order  if and only if $f\colon X_0\lra Y_0$ preserves order and $f(p\rightarrowtail x)\leq  p\rightarrowtail f(x)$ for all $x\in X$ and $p\in [0,1]$.  \end{itemize}

The verification of the following lemma is routine.
\begin{lem}\label{cong on CL}
Suppose that $X$ be  a  complete $[0,1]$-lattice;  suppose that   $R$ is a relation on    $X$ subject to the following conditions:
\begin{enumerate}[label=\rm(\roman*)]\setlength{\itemsep}{0pt} \item     $R$ is closed w.r.t. directed joins in $X_0 \times  X_0$;
\item   $R$ is closed w.r.t. meets in $X_0\times  X_0$;
\item If $(x,y)\in R$, then  $(p\rightarrowtail x, p\rightarrowtail  y)\in R$ for all $p\in[0,1]$.
\end{enumerate} Then, the map \[k\colon   X\lra X, \quad k(x)=\bw\{y\mid (x,y)\in R\}\]   is a $[0,1]$-Scott continuous kernel operator.
\end{lem}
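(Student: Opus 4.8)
The plan is to check the four ingredients of the statement one at a time, feeding each of the three closure hypotheses on $R$ into exactly one of them: condition (ii) will make $k$ a deflationary idempotent map of $X_0$, condition (iii) will make $k$ compatible with cotensors, and condition (i) will make $k$ preserve directed joins; the cotensor criterion and the criterion of Corollary \ref{Y=S} recalled just above then assemble these into the phrase ``$[0,1]$-Scott continuous kernel operator''. Throughout I would write $R_x:=\{y\in X\mid (x,y)\in R\}$, so that $k(x)=\bigwedge R_x$; this meet exists because $X$ complete forces $X_0$ to be a complete lattice, and for the same reason all cotensors $p\rightarrowtail y$ exist.

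I would first settle the kernel-operator part. Applying (ii) to the family $\{(x,y)\}_{y\in R_x}$, all of whose first coordinates equal $x$, shows that $R_x$ is closed under meets, so $(x,k(x))\in R$; since $R$ is reflexive we also have $x\in R_x$, hence $k(x)\le x$ in $X_0$. Monotonicity of $k\colon X_0\lra X_0$ comes again from (ii): for $x\le x'$ and $y\in R_{x'}$, the meet $(x\wedge x',x\wedge y)=(x,x\wedge y)$ of $(x,x)$ and $(x',y)$ in $X_0\times X_0$ lies in $R$, so $k(x)\le x\wedge y\le y$, and meeting over $R_{x'}$ gives $k(x)\le k(x')$. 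For idempotency one checks $R_{k(x)}\subseteq R_x$: if $(k(x),y)\in R$ then transitivity with $(x,k(x))\in R$ gives $(x,y)\in R$; hence $k(k(x))=\bigwedge R_{k(x)}\ge\bigwedge R_x=k(x)$, while $k(k(x))\le k(x)$ since $k$ is deflationary, so $k^2=k$.

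Next, $k$ preserves the $[0,1]$-order: by the criterion recalled before the lemma it suffices that $k\colon X_0\lra X_0$ be monotone (done) and that $k(p\rightarrowtail x)\le p\rightarrowtail k(x)$ for all $p$ and $x$, and the latter is immediate --- condition (iii) applied to $(x,k(x))\in R$ gives $(p\rightarrowtail x,\,p\rightarrowtail k(x))\in R$, i.e. $p\rightarrowtail k(x)\in R_{p\rightarrowtail x}$. Finally, for $[0,1]$-Scott continuity, Corollary \ref{Y=S} lets me replace the claim (for the now $[0,1]$-order-preserving map $k$) by Scott continuity of the underlying map $k\colon X_0\lra X_0$: given a directed $D\subseteq X_0$, monotonicity gives $\bigvee_{d\in D}k(d)\le k(\bigvee D)$, and for the reverse inequality the set $\{(d,k(d))\mid d\in D\}$ is directed in $X_0\times X_0$ with join $(\bigvee D,\bigvee_{d\in D}k(d))$, so (i) places this pair in $R$, whence $k(\bigvee D)=\bigwedge R_{\bigvee D}\le\bigvee_{d\in D}k(d)$.

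None of this runs deep --- the lemma really is bookkeeping, which is why its verification is billed as routine --- but the step to watch is idempotency, since it is the only place that uses reflexivity and transitivity of $R$ (automatic when, as in the intended application, $R$ is a congruence); every other step is a one-line application of a single closure condition together with the cotensor and Scott-continuity criteria already in hand.
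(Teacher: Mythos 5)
The paper does not actually supply a proof here --- it declares the verification ``routine'' and moves on --- so there is nothing to compare against except the intended routine argument, and your write-up is exactly that argument, carried out correctly: (ii) gives $(x,k(x))\in R$ and monotonicity of $k$ on $X_0$, (iii) gives the cotensor inequality $k(p\rightarrowtail x)\le p\rightarrowtail k(x)$ needed for $[0,1]$-order preservation, (i) gives Scott continuity of $k\colon X_0\lra X_0$, and Corollary \ref{Y=S} upgrades the latter to $[0,1]$-Scott continuity. Your one substantive observation is also correct and worth recording: conditions (i)--(iii) alone do not suffice for an arbitrary relation (take $R=\varnothing$, which satisfies all three vacuously yet yields $k\equiv\top$, not deflationary), so reflexivity and transitivity of $R$ must be read into the hypothesis --- and not only for idempotency, but already for $k(x)\le x$ and for the monotonicity step, both of which invoke $(x,x)\in R$. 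This is harmless in the paper's sole application (Proposition \ref{creates split coequalizer}), where $R$ is the kernel equivalence $\{(y_1,y_2)\mid h(y_1)=h(y_2)\}$, and the label ``cong on CL'' suggests the authors tacitly intend $R$ to be a congruence; but as literally stated the lemma needs that extra hypothesis, and you were right to flag it.
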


\begin{prop}\label{creates split coequalizer} The forgetful functor $\conlat\lra {\sf Set}$  creates split coequalizers. \end{prop}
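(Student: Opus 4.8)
The plan is to verify Beck's condition by realizing the would‑be coequalizer as the image of a $[0,1]$‑Scott continuous kernel operator produced by Lemma~\ref{cong on CL}. So suppose $f,g\colon A\lra B$ is a parallel pair in $\conlat$ whose underlying maps carry a split coequalizer $(e\colon B\lra C,\ s\colon C\lra B,\ t\colon B\lra A)$ in $\sf Set$, so that $ef=eg$, $es=\id_C$, $ft=\id_B$ and $gt=se$. Put $k_0:=gt=se$, an idempotent self‑map of the set $B$, and let $R:=\{(x,y)\in B\times B\mid e(x)=e(y)\}=\ker k_0$. Since $e$ is the set‑level coequalizer, $R$ is the equivalence relation generated by $S:=\{(f(a),g(a))\mid a\in A\}$. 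The heart of the argument is to check that $R$, regarded as a relation on the complete $[0,1]$‑lattice $B$, satisfies conditions (i)--(iii) of Lemma~\ref{cong on CL}.

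Two elementary observations drive the verification: (1) if $f(a)=f(a')$ then $(g(a),g(a'))\in R$, because $g(a)\mathrel R f(a)=f(a')\mathrel R g(a')$ (and symmetrically in $f$); and (2) for every $x\in B$ we have $(x,k_0(x))\in S\subseteq R$, since $f(t(x))=x$ and $g(t(x))=k_0(x)$. Because $f$ and $g$ are right adjoints they preserve all meets and all cotensors $p\rightarrowtail(-)$; hence for a family $(x_i,y_i)_i$ in $R$ both $t(\bigwedge_i x_i)$ and $\bigwedge_i t(x_i)$ are $f$‑preimages of $\bigwedge_i x_i$, so by (1) $k_0(\bigwedge_i x_i)\mathrel R g(\bigwedge_i t(x_i))=\bigwedge_i g(t(x_i))=\bigwedge_i k_0(x_i)$, and since $k_0(x_i)=k_0(y_i)$ the same computation for the $y_i$'s together with (2) gives $(\bigwedge_i x_i,\bigwedge_i y_i)\in R$; replacing $\bigwedge_i(-)$ by $p\rightarrowtail(-)$ yields (iii) in the same way. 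Condition (i), closure of $R$ under directed joins in $B_0\times B_0$, is the genuinely delicate point and is where the $[0,1]$‑Scott continuity of $f$ and $g$ must enter: since $ft=\id_B$ the map $f$ is onto, so the left adjoint $f_\ast$ of $f$ is also a section of $f$ ($ff_\ast=\id_B$) and, being a left adjoint, preserves all joins, whence $g\circ f_\ast$ preserves directed joins while $(x,g(f_\ast(x)))\in S\subseteq R$ for every $x$. I expect the bookkeeping needed to push a directed family of $R$‑related pairs through these maps, and conclude its join stays in $R$, to be the main technical obstacle of the proof.

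Granting (i)--(iii), Lemma~\ref{cong on CL} yields a $[0,1]$‑Scott continuous kernel operator $k\colon B\lra B$ with $k(x)=\bigwedge\{y\mid(x,y)\in R\}$; by the remark just preceding the present proposition, $k(B)$ is a continuous $[0,1]$‑lattice and $k\colon B\lra k(B)$ is a $[0,1]$‑Scott continuous right adjoint, hence a morphism of $\conlat$. Condition (ii) forces $k(x)\mathrel R x$, so the fibres of $k$ are exactly the $R$‑classes and $e$ restricts to a bijection $k(B)\lra C$; transporting the continuous $[0,1]$‑lattice structure of $k(B)$ along this bijection equips $C$ with a structure under which $e$ becomes an isomorphic copy of $k$, hence a morphism of $\conlat$ whose image under the forgetful functor $U\colon\conlat\lra\sf Set$ is the given $e$.

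It remains to see that this $e$ is a coequalizer of $f,g$ in $\conlat$ and that the structure just placed on $C$ is the only one doing so. If $h\colon B\lra D$ is a morphism of $\conlat$ with $hf=hg$, then $\ker h$ is an equivalence relation containing $S$, hence contains $R$, so $h$ factors uniquely as $h=\bar h\circ e$ with $\bar h=h\circ s\colon C\lra D$. That $\bar h$ is a morphism of $\conlat$ follows from Theorem~\ref{Characterization of adjoints}: it preserves the $[0,1]$‑order because $C\cong k(B)$ carries the order inherited from $B$; it is $[0,1]$‑Scott continuous because the inclusion $k(B)\hookrightarrow B$ (the left adjoint of $k$) preserves suprema and sends forward Cauchy weights to forward Cauchy weights, while $h$ is $[0,1]$‑Scott continuous; and $\bar h\colon (k(B))_0\lra D_0$ is right adjoint to $k\circ h_\ast$, where $h_\ast\dashv h$. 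Finally, since $e$ is a surjective, hence epimorphic, morphism of $\conlat$, the standard comparison argument forces any two continuous $[0,1]$‑lattice structures on $C$ that make $e$ a coequalizing morphism to coincide; thus $U$ creates the split coequalizer.
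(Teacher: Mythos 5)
Your strategy is the same as the paper's: apply Lemma~\ref{cong on CL} to $R=\ker e$ to obtain a $[0,1]$-Scott continuous kernel operator whose image transports the continuous $[0,1]$-lattice structure to $C$. Your verifications of conditions (ii) and (iii) via the idempotent $k_0=gt=se$ are correct. But condition (i) --- closure of $R$ under directed joins --- is precisely the step you leave open, and the device you propose for it does not close it: $g\circ f_\ast$ does preserve directed joins and satisfies $(x,gf_\ast(x))\in R$, but unlike $k_0$ it is not constant on $R$-classes, so from $x_i\mathrel{R}y_i$ you learn nothing relating $\bigvee_i gf_\ast(x_i)$ to $\bigvee_i gf_\ast(y_i)$; conversely $k_0$ is constant on classes but, being built from the bare set maps $s$ and $t$, has no reason to preserve directed joins. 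As written, the proof therefore has a genuine gap at its self-acknowledged ``main technical obstacle''.

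The gap is closed by a monotonization trick that uses the meet-preservation and the $[0,1]$-Scott continuity of $f,g$ simultaneously. First observe (as the paper does) that $(y_1,y_2)\in R$ if and only if there are $a_1,a_2\in A$ with $g(a_1)=g(a_2)$, $f(a_1)=y_1$ and $f(a_2)=y_2$; for the forward direction take $a_i=t(y_i)$ and use $g(t(y_i))=s(e(y_i))$ together with $e(y_1)=e(y_2)$. Now let $D\subseteq R$ be directed in $B_0\times B_0$. For $d=(u,v)\in D$ put $\hat a_d=\bigwedge\{\,t(u')\mid (u',v')\in D,\ (u',v')\geq d\,\}$ and $\hat b_d=\bigwedge\{\,t(v')\mid (u',v')\in D,\ (u',v')\geq d\,\}$. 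Since $f$ preserves meets and $ft=\id$, we get $f(\hat a_d)=\bigwedge\{u'\mid (u',v')\in D,\ (u',v')\geq d\}=u$, and likewise $f(\hat b_d)=v$; since $g$ preserves meets and $gt=se$, we get $g(\hat a_d)=\bigwedge\{s(e(u'))\}=\bigwedge\{s(e(v'))\}=g(\hat b_d)$. Moreover $d\mapsto\hat a_d$ and $d\mapsto\hat b_d$ are monotone on the directed set $D$, so the families $(\hat a_d)_d$ and $(\hat b_d)_d$ are directed; applying the $[0,1]$-Scott continuity of $f$ and $g$ to $\bigvee_d\hat a_d$ and $\bigvee_d\hat b_d$ exhibits a witnessing pair for the join of $D$, which therefore lies in $R$. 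With this step supplied, the remainder of your argument (transport of structure along the bijection $k(B)\lra C$, the universal property, and uniqueness of the lift) goes through.
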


\begin{proof}
Let $f,g\colon  X\to Y$ be a parallel pair of morphisms in $\conlat$; and let  $h\colon  Y\lra Z$ be a  split coequalizer of $f,g$ in  ${\sf Set}$. By definition there exist morphisms  $Z\to^i Y\to^j X $ in ${\sf Set}$ such that
$$h\circ f=h\circ g,~ f\circ j=\id,~ h\circ i=\id,~ g\circ j=i\circ h.$$
Let \[R=\{(y_1,y_2)\in Y\times Y\mid  h(y_1)=h(y_2)\}.\] It is not hard to check that $(y_1,y_2)\in R$ if and only if there is some $(x_1,x_2)\in X\times X$ such that $g(x_1)=g(x_2)$, $y_1=f(x_1)$ and $y_2=f(x_2)$. With help of this fact, one readily verifies that $R$ satisfies the conditions (i)-(iii) in Lemma \ref{cong on CL},
hence  $R$ determines a $[0,1]$-Scott continuous kernel operator $k\colon  Y\lra  Y$. Since $k(Y)$ is a  continuous $[0,1]$-lattice with an underlying   set   equipotent to $Z$,   $Z$ can be made into a continuous $[0,1]$-lattice  so that $h\colon  Y\lra Z$ is a $[0,1]$-Scott continuous right adjoint. This proves that  the forgetful functor $\conlat\lra {\sf Set}$ creates split coequalizers. \end{proof}

\section{Injective $[0,1]$-approach spaces}

In this section we introduce the notion of Scott $[0,1]$-approach structure for $[0,1]$-ordered sets and prove the main result of this paper --- Theorem \ref{preserve product}. This theorem says that the structure of a separated injective $[0,1]$-approach space is completely determined by its specialization $[0,1]$-order which turns out to be a continuous $[0,1]$-lattice.

\begin{defn} (\cite[Definition 4.4]{Wagner97}) A weight $\phi$ of a $[0,1]$-ordered set $X$ is  Scott closed  if, as a  $[0,1]$-order-preserving map,  $\phi\colon  X\lra ([0,1],\alpha_R)$   is $[0,1]$-Scott continuous.  \end{defn}

\begin{lem}\label{Scott closed weights} Let $\phi$ be a weight of    a $[0,1]$-ordered set $X$. Then  $\phi$ is   Scott closed if and only if $$\sub_X(\lambda,\phi)= \phi(\sup\lambda)$$ for every forward Cauchy weight $\lambda$ of $X$.     \end{lem}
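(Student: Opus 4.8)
The plan is to reduce the claim directly to equation (\ref{inclusion as sup}), which already computes suprema of image weights in $([0,1],\alpha_R)$. First I would unfold the definition: $\phi$ being Scott closed means exactly that $\phi\colon X\lra([0,1],\alpha_R)$, viewed as a $[0,1]$-order-preserving map, is $[0,1]$-Scott continuous; that is, for every forward Cauchy weight $\lambda$ of $X$ whose supremum $\sup\lambda$ exists in $X$ one has $\phi(\sup\lambda)=\sup\phi^\ra(\lambda)$, where the right-hand supremum is taken in $([0,1],\alpha_R)$. I would note in passing that this right-hand supremum always exists: $\phi^\ra(\lambda)$ is again a forward Cauchy weight because $\mathcal{D}$ is a functor, and $([0,1],\alpha_R)$ is a complete $[0,1]$-lattice by Example \ref{dr is continuous}, so no side condition intervenes on the right.

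Next I would invoke equation (\ref{inclusion as sup}): for $\phi$ regarded as a map $X\lra([0,1],\alpha_R)$ and for any weight $\lambda$ of $X$, one has $\sup\phi^\ra(\lambda)=\sub_X(\lambda,\phi)$. Substituting this identity into the displayed equality above, the statement ``$\phi(\sup\lambda)=\sup\phi^\ra(\lambda)$ for every forward Cauchy $\lambda$ with $\sup\lambda$ existing'' and the statement ``$\phi(\sup\lambda)=\sub_X(\lambda,\phi)$ for every forward Cauchy $\lambda$ with $\sup\lambda$ existing'' agree term by term, so both implications of the lemma follow at once. (If the statement is to be read with $\lambda$ ranging over \emph{all} forward Cauchy weights, the equality $\sub_X(\lambda,\phi)=\phi(\sup\lambda)$ is of course understood only when $\sup\lambda$ exists.)

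There is no real obstacle here; the only point requiring a moment of care is that the supremum in the definition of $[0,1]$-Scott continuity for $\phi$ is formed in the codomain $([0,1],\alpha_R)$, not in $\mathcal{P}X$, and that equation (\ref{inclusion as sup}) is precisely the evaluation of that supremum. Everything else is bookkeeping with the definitions already recalled in the excerpt.
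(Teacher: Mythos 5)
Your proposal is correct and is essentially the paper's own argument: the paper's proof simply says the claim is routine and that necessity follows from Equation (\ref{inclusion as sup}), which is exactly the identity $\sup\phi^\ra(\lambda)=\sub_X(\lambda,\phi)$ you substitute into the definition of $[0,1]$-Scott continuity. Your extra remarks (that the supremum on the right is taken in $([0,1],\alpha_R)$ and always exists) are harmless elaborations of the same route.
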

\begin{proof}
Routine, for example,
the necessity follows from Equation (\ref{inclusion as sup}). \end{proof}

\begin{prop}{\rm(\cite[Proposition 5.9]{LZZ2020})}
For each $[0,1]$-ordered set $(X,\alpha)$, the set $\sigma(\alpha)$ of Scott closed weights of $(X,\alpha)$  is a strong $[0,1]$-cotopology on $X$.
\end{prop}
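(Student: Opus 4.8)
The plan is to check the five axioms (C1)--(C5) defining a strong $[0,1]$-cotopology one by one, using throughout the characterization of Scott closed weights from Lemma \ref{Scott closed weights}: a weight $\phi$ of $X$ lies in $\sigma(\alpha)$ exactly when $\sub_X(\lambda,\phi)=\phi(\sup\lambda)$ for every forward Cauchy weight $\lambda$ of $X$ for which $\sup\lambda$ exists. A preliminary routine observation is that each of $0_X$, $\phi\vee\psi$, $\bw_i\phi_i$, $p\ra\phi$ and $p\with\phi$ is again a weight when $\phi,\psi,\{\phi_i\}$ are: weights are closed under pointwise joins and pointwise meets, and the inequalities $p\with(p\ra\phi(y))\with X(x,y)\leq\phi(y)\with X(x,y)\leq\phi(x)$ and $(p\with\phi(y))\with X(x,y)=p\with(\phi(y)\with X(x,y))\leq p\with\phi(x)$ settle the remaining two cases. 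So in each axiom only the Scott-closedness condition needs checking.

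For (C1): $0_X(\sup\lambda)=0$, while $\sub_X(\lambda,0_X)=\bw_{x}(\lambda(x)\ra 0)$, and since $\lambda$ is inhabited (Proposition \ref{FC=irr}) the identity $\big(\bw_{x}(\lambda(x)\ra 0)\big)\with\bv_{x}\lambda(x)=0$ forces $\sub_X(\lambda,0_X)=0$. For (C3): the identity $r\ra\bw_i q_i=\bw_i(r\ra q_i)$ gives $\sub_X(\lambda,\bw_i\phi_i)=\bw_i\sub_X(\lambda,\phi_i)=\bw_i\phi_i(\sup\lambda)=(\bw_i\phi_i)(\sup\lambda)$. For (C4): combining $r\ra(p\ra s)=(p\with r)\ra s$ and $r\ra\bw_i q_i=\bw_i(r\ra q_i)$ yields $\sub_X(\lambda,p\ra\phi)=\bw_{x}(\lambda(x)\ra(p\ra\phi(x)))=p\ra\bw_{x}(\lambda(x)\ra\phi(x))=p\ra\phi(\sup\lambda)=(p\ra\phi)(\sup\lambda)$. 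For (C2) the only extra ingredient is $\sub_X(\lambda,\phi\vee\psi)=\sub_X(\lambda,\phi)\vee\sub_X(\lambda,\psi)$, which is precisely the irreducibility of the forward Cauchy weight $\lambda$ recorded in Proposition \ref{FC=irr}; then $\sub_X(\lambda,\phi\vee\psi)=\sub_X(\lambda,\phi)\vee\sub_X(\lambda,\psi)=\phi(\sup\lambda)\vee\psi(\sup\lambda)=(\phi\vee\psi)(\sup\lambda)$.

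The one genuinely non-formal point is the strong axiom (C5), where continuity (not merely left-continuity) of the t-norm enters; I would handle it by a factorization. Consider $\mu_p\colon([0,1],\alpha_R)\lra([0,1],\alpha_R)$, $t\mapsto p\with t$. From $(p\with t)\with(t\ra s)\leq p\with s$ one sees $\mu_p$ preserves the $[0,1]$-order; its underlying-order version is the same map read on the reversed order $([0,1],\alpha_R)_0=([0,1],\geq)$, and by continuity of $\with$ a down-directed net $t_i\downarrow t$ is carried to the net $p\with t_i\downarrow p\with t$, so $\mu_p$ is Scott continuous on $([0,1],\geq)$. Hence $\mu_p$ is $[0,1]$-Scott continuous by Corollary \ref{Y=S} (recall $([0,1],\alpha_R)$ is a complete $[0,1]$-lattice by Example \ref{dr is continuous}). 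Since $p\with\phi=\mu_p\circ\phi$, and a composite of $[0,1]$-Scott continuous maps is $[0,1]$-Scott continuous (using that $(-)^{\ra}$ is functorial on $[0,1]\text{-}{\sf Ord}$ and carries forward Cauchy weights to forward Cauchy weights, and that all relevant suprema exist in the complete $[0,1]$-lattice $([0,1],\alpha_R)$), $p\with\phi$ is Scott closed, giving (C5). Equivalently, this argument shows $\sub_X(\lambda,p\with\phi)=p\with\sub_X(\lambda,\phi)$ for forward Cauchy $\lambda$, an identity awkward to prove by hand precisely because $\ra$ does not distribute over $\with$.

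The main obstacle is therefore (C5): axioms (C1)--(C4) reduce to quantale arithmetic together with the inhabitedness and irreducibility of forward Cauchy weights (Proposition \ref{FC=irr}), whereas (C5) uses essentially that $p\with(-)$ preserves down-directed infima, i.e.\ the continuity of the t-norm. A secondary technicality worth stating carefully is the ``whenever the supremum exists'' clause in the definition of $[0,1]$-Scott continuity, but since every map in sight lands in the complete $[0,1]$-lattice $([0,1],\alpha_R)$, the suprema of the pushed-forward weights always exist, so the composition step causes no difficulty.
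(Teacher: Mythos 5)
Your proof is correct. The paper itself does not prove this proposition -- it is imported verbatim from \cite[Proposition 5.9]{LZZ2020} -- so there is no in-paper argument to compare against; judged on its own, your verification is sound and complete. The routine axioms (C1)--(C4) are handled by exactly the quantale identities you cite, (C2) correctly isolates irreducibility of forward Cauchy weights (Proposition \ref{FC=irr}) as the one non-arithmetic ingredient, and you are right that (C5) is the only place where full continuity (rather than left-continuity) of the t-norm is indispensable: $r\ra(p\with s)$ does not reduce to $p\with(r\ra s)$ by any quantale law, so some appeal to preservation of down-directed infima by $p\with(-)$ is unavoidable. Your factorization $p\with\phi=\mu_p\circ\phi$ with $\mu_p$ shown to be a $[0,1]$-Scott continuous endomap of the complete $[0,1]$-lattice $([0,1],\alpha_R)$ via Corollary \ref{Y=S} is a clean way to package that step, and your attention to the ``whenever $\sup\lambda$ exists'' clause and to the existence of suprema in the codomain is exactly the care the composition argument requires.
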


Because of Proposition \ref{[0,1]-app = strong},
for each $[0,1]$-ordered set $(X,\alpha)$  we view the strong $[0,1]$-cotopology   $ \sigma(\alpha) $ and the $[0,1]$-approach structure  $ \zeta(\sigma(\alpha)) $  as different facets of the same object  as in the theory of approach spaces \cite{RL97,Lowen15}, and call them the \emph{Scott $[0,1]$-cotopology} and the \emph{Scott $[0,1]$-approach structure} of $(X,\alpha)$, respectively.
For each $[0,1]$-ordered set $(X,\alpha)$, we write $$\Sigma(X,\alpha),$$ or simply $\Sigma X$ if  $\alpha$ is clear from the context, for the $[0,1]$-cotopological space $(X,\sigma(\alpha))$  or the $[0,1]$-approach space  $(X,\zeta(\sigma(\alpha)))$.

\begin{rem}Since Lawvere's quantale $([0,\infty]^{\rm op},+,0)$ is isomorphic to the quantale   $([0,1],\&_P,1)$, the  Scott approach distances of generalized metric spaces in \cite{LiZ18b,Windels} are the Scott $[0,1]$-approach structures with respect to the product t-norm.\end{rem}

\begin{exmp}\label{complement} (\cite[Lemma 4.13]{Wagner97}) For each element $a$ of a $[0,1]$-ordered set, the weight $X(-,a)$ is Scott closed. This is because that for each  forward Cauchy weight $\lambda$ of $X$, \[\sub_X(\lambda,X(-,a))= X(\sup\lambda,a) \] by definition of $\sup\lambda$. It is clear that $X(-,a)$ is the closure of $1_a$ in $\Sigma X$.

In particular, for each $p\in[0,1]$, ${\rm id}\ra p$ is a Scott closed weight of $([0,1],\alpha_L)$, hence \[{\rm id}\ra p\colon ([0,1],\alpha_L)\lra([0,1],\alpha_R)\] is $[0,1]$-Scott continuous. \end{exmp}

If $f\colon X\lra Y$ is $[0,1]$-Scott continuous, then $f\colon \Sigma X\lra\Sigma Y$ is continuous. So we obtain a functor:
$$\Sigma\colon [0,1]\text{-}{\sf Ord}^\uparrow\lra[0,1]\text{-}{\sf App}.$$

\begin{prop}\label{YC=C} The functor $\Sigma$ is full; that is, a map $f\colon X\lra Y$ between  $[0,1]$-ordered sets is $[0,1]$-Scott continuous if and only if $f\colon \Sigma X\lra\Sigma Y$ is continuous.
\end{prop}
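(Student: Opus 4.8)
The converse implication --- $[0,1]$-Scott continuity of $f$ forces continuity of $f\colon\Sigma X\to\Sigma Y$ --- is recorded just before the statement, so the plan is to prove only sufficiency. The first step is to unfold the hypothesis. By Proposition \ref{[0,1]-app = strong}, $\Sigma X$ and $\Sigma Y$, regarded as strong $[0,1]$-cotopological spaces $(X,\sigma(\alpha_X))$ and $(Y,\sigma(\alpha_Y))$, have the same morphisms into them and out of them as the associated $[0,1]$-approach spaces; since the closed sets of $(Y,\sigma(\alpha_Y))$ are exactly the Scott closed weights of $Y$, the continuity of $f$ amounts to the assertion that $\psi\circ f=f^\leftarrow(\psi)$ is a Scott closed weight of $X$ for every Scott closed weight $\psi$ of $Y$. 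From here the task is to deduce that $f$ preserves the $[0,1]$-order and that $f$ preserves suprema of forward Cauchy weights; together these are precisely $[0,1]$-Scott continuity.

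For order preservation I would fix $b\in Y$. By Example \ref{complement} the weight $Y(-,b)$ is Scott closed, so $Y(f(-),b)=f^\leftarrow(Y(-,b))$ is a (Scott closed, hence in particular a) weight of $X$; the defining inequality of a weight gives $Y(f(y),b)\with X(x,y)\le Y(f(x),b)$ for all $x,y\in X$. Specializing $b=f(y)$ and using $Y(f(y),f(y))\ge 1$ yields $X(x,y)\le Y(f(x),f(y))$, so $f$ preserves the $[0,1]$-order.

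With order preservation in hand, the adjunction $f^\ra\dashv f^\leftarrow$ between $\CP X$ and $\CP Y$ becomes available. Let $\phi$ be a forward Cauchy weight of $X$ whose supremum $a=\sup\phi$ exists. For each $b\in Y$ I would compute
$$\CP Y(f^\ra(\phi),Y(-,b))=\CP X(\phi,f^\leftarrow(Y(-,b)))=\sub_X(\phi,Y(f(-),b)).$$
Since $Y(-,b)$ is Scott closed, the hypothesis gives that $Y(f(-),b)$ is a Scott closed weight of $X$, so by Lemma \ref{Scott closed weights} applied to this weight and the forward Cauchy weight $\phi$ we get $\sub_X(\phi,Y(f(-),b))=Y(f(-),b)(\sup\phi)=Y(f(a),b)$. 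Thus $\CP Y(f^\ra(\phi),Y(-,b))=Y(f(a),b)$ for all $b\in Y$, which is exactly the statement that $f(a)$ is a supremum of $f^\ra(\phi)$; hence $f(\sup\phi)=\sup f^\ra(\phi)$ and $f$ is $[0,1]$-Scott continuous.

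All steps are short and invoke only results already available. The only point that needs care is the order of the argument: the adjunction $f^\ra\dashv f^\leftarrow$ used in the supremum step presupposes that $f$ preserves the $[0,1]$-order, so that must be established first; and one must invoke Proposition \ref{[0,1]-app = strong} at the outset so as to read approach-space continuity of $f$ as the condition ``$f^\leftarrow$ sends Scott closed weights to Scott closed weights''. I do not anticipate any genuine obstacle beyond this bookkeeping.
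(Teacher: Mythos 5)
Your argument is correct, and every step checks out against the paper's machinery: reading approach-space continuity of $f\colon\Sigma X\to\Sigma Y$ as ``$\psi\circ f$ is Scott closed for every Scott closed weight $\psi$ of $Y$'' is legitimate by Proposition \ref{[0,1]-app = strong} (since $\sigma(\alpha)$ is a strong $[0,1]$-cotopology and $\kappa\circ\zeta(\sigma(\alpha))=\sigma(\alpha)$); extracting order-preservation from the weight inequality for $Y(f(-),f(y))$ with $Y(f(y),f(y))\geq 1$ is sound; and the supremum step correctly combines the adjunction $f^\ra\dashv f^\leftarrow$ (available only after order-preservation, which you rightly establish first) with Example \ref{complement} and Lemma \ref{Scott closed weights}. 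The comparison with the paper is, however, vacuous in one sense: the paper's ``proof'' consists of the single sentence that the result is in essence Proposition 4.15 of Wagner's paper, so no internal argument is given to match yours against. What your write-up buys is a complete, self-contained derivation from results already proved in the paper (the representable weights $Y(-,b)$ being Scott closed, the characterization of Scott closed weights via $\sub_X(\lambda,\phi)=\phi(\sup\lambda)$, and the $f^\ra\dashv f^\leftarrow$ adjunction), together with the useful observation that continuity of $f\colon\Sigma X\to\Sigma Y$ by itself forces $f$ to preserve the $[0,1]$-order, so that the proposition makes sense for arbitrary maps and not only for maps assumed order-preserving in advance. The only cosmetic caveat is that Lemma \ref{Scott closed weights} should be read as applying to forward Cauchy weights whose suprema exist, which is exactly the situation you are in.
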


\begin{proof} This is  in essence    \cite[Proposition 4.15]{Wagner97}. \end{proof}

For each $[0,1]$-ordered set $X$, the specialization $[0,1]$-order of the $[0,1]$-approach space $\Sigma X$ coincides with that of $X$; that is, $\Omega\Sigma X=X$. In particular, the functor $$\Sigma\colon [0,1]\text{-}{\sf Ord}^\uparrow\lra[0,1]\text{-}{\sf App}$$ is a full embedding.

\begin{exmp}Since a map $\phi\colon  ([0,1],\alpha_R)\lra ([0,1],\alpha_R)$   is $[0,1]$-Scott continuous if and only if it is right continuous and preserves the $[0,1]$-order, it follows from Example \ref{cotopology of K} that   $$\Sigma([0,1],\alpha_R)= \mathbb{K}.$$ Thus, the Scott $[0,1]$-cotopology of $([0,1],\alpha_R)$ is the strong $[0,1]$-cotopology on $[0,1]$ generated by the identity map as a subbasis. This provides a simple proof of the assertion of \cite[Example 5.10]{LZZ2020}.
\end{exmp}

A closed set $\phi$ of a $[0,1]$-cotopological space $X$ is  \emph{irreducible} if    for all closed sets $\psi_1, \psi_2$ of $X$, \[\sub_X(\phi,\psi_1\vee\psi_2)= \sub_X(\phi,\psi_1)\vee \sub_X(\phi, \psi_2).\]

A $[0,1]$-cotopological space $X$ is   \emph{sober} \cite{Zhang2018} if for each inhabited (i.e., $\bv_{x\in X}\phi(x)=1$) and irreducible closed set $\phi$ of $X$, there is a unique $x\in X$ such that $\phi$ is the closure of $1_x$.

We say that a $[0,1]$-approach space $(X,\delta)$ is \emph{sober} if the $[0,1]$-cotopological space $(X,\kappa(\delta))$ is sober. If the continuous t-norm $\with$ is isomorphic to the product t-norm, then  this notion   is, in essence, that of sober approach spaces in \cite{BRC} postulated in terms of Lawvere's quantale.

From Proposition \ref{FC=irr} and \cite[Proposition 3.10]{Zhang2018}  one   deduces that the specialization $[0,1]$-order of a sober $[0,1]$-cotopological space is $\mathcal{D}$-cocomplete. Conversely,   the Scott $[0,1]$-cotopology of every $[0,1]$-domain is sober, as we now see.

\begin{lem}\label{Scott closure} Let $X$ be a $[0,1]$-domain and let $\phi$ be a  weight   of $X$. Then the closure of $\phi$ in $\Sigma X$ is given by \(\overline{\phi} (a)= \sub_X(w(-,a),\phi)\) for all $a\in X$, where $w$ is the way below $[0,1]$-relation of $X$.  \end{lem}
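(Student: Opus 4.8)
The plan is to show that the weight $\psi$ given by $\psi(a)=\sub_X(w(-,a),\phi)$ is exactly the closure $\overline{\phi}$ of $\phi$ in $\Sigma X$. Since the closed sets of $\Sigma X$ are precisely the Scott closed weights of $X$, and $\overline{\phi}$ is by definition the (pointwise) meet of all Scott closed weights above $\phi$, it suffices to establish three things: (a) $\psi$ is a weight with $\phi\leq\psi$; (b) $\psi\leq\chi$ for every Scott closed weight $\chi\geq\phi$; and (c) $\psi$ is itself Scott closed. Throughout I would use that a $[0,1]$-domain is $\mathcal{D}$-cocomplete, that by Proposition \ref{way below as left adjoint} each $w(-,a)$ is a forward Cauchy weight with supremum $a$, and the basic inequalities $w(x,y)\leq X(x,y)$ and $X(a,b)\with w(x,a)\leq w(x,b)$ for the way below $[0,1]$-relation.

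For (a): that $\psi$ is a weight follows from $X(a,b)\with w(x,a)\leq w(x,b)$ together with the adjunction $p\with q\leq r\iff q\leq p\ra r$, which give $\psi(b)\with X(a,b)\leq\psi(a)$; and $\phi\leq\psi$ follows from $\phi(a)\with w(x,a)\leq\phi(a)\with X(x,a)\leq\phi(x)$, so that $\phi(a)\leq\bigwedge_x w(x,a)\ra\phi(x)=\psi(a)$. For (b): since $\phi\leq\chi$ and $\sub_X$ is monotone in its second variable, $\psi(a)\leq\sub_X(w(-,a),\chi)$; and because $w(-,a)$ is forward Cauchy with supremum $a$, Lemma \ref{Scott closed weights} applied to the Scott closed weight $\chi$ yields $\sub_X(w(-,a),\chi)=\chi(\sup w(-,a))=\chi(a)$.

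For (c), by Lemma \ref{Scott closed weights} I must check $\sub_X(\lambda,\psi)=\psi(\sup\lambda)$ for every forward Cauchy weight $\lambda$. Unwinding the definition with the usual quantale identities ($p\ra(q\ra r)=(p\with q)\ra r$, interchange of $\ra$ with meets, and $(\bigvee_i p_i)\ra r=\bigwedge_i(p_i\ra r)$) one rewrites
\[\sub_X(\lambda,\psi)=\bigwedge_{x\in X}\Big(\bigvee_{a\in X}\lambda(a)\with w(x,a)\Big)\ra\phi(x),\]
so the task reduces to the identity $\bigvee_{a\in X}\lambda(a)\with w(x,a)=w(x,\sup\lambda)$ for each $x$; granting this, the displayed expression becomes $\bigwedge_x w(x,\sup\lambda)\ra\phi(x)=\sub_X(w(-,\sup\lambda),\phi)=\psi(\sup\lambda)$, as required.

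The main obstacle is precisely this identity $\bigvee_a\lambda(a)\with w(x,a)=w(x,\sup\lambda)$. For ``$\leq$'' I would observe that $\lambda(a)\leq X(a,\sup\lambda)$ — read off from $1\leq X(\sup\lambda,\sup\lambda)=\sub_X(\lambda,X(-,\sup\lambda))$ — and then apply $X(a,\sup\lambda)\with w(x,a)\leq w(x,\sup\lambda)$. For ``$\geq$'', the essential inputs are $w(z,\sup\lambda)\leq\lambda(z)$ (immediate from the defining infimum of $w$ upon taking the forward Cauchy weight $\lambda$ itself as test weight) and the interpolation property (Proposition \ref{interpolation}): $w(x,\sup\lambda)=\bigvee_z w(z,\sup\lambda)\with w(x,z)\leq\bigvee_z\lambda(z)\with w(x,z)$. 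This is the one place interpolation is needed, and it is what makes the harder inequality work; once it is in hand, (a)--(c) combine to give $\psi=\overline{\phi}$.
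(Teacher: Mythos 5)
Your proposal is correct and follows essentially the same route as the paper: both show that $\psi(a)=\sub_X(w(-,a),\phi)$ is a weight sitting between $\phi$ and every Scott closed weight above $\phi$, and both invoke the interpolation property of $w$ at the decisive last step. The only (minor) difference is in how Scott-closedness of $\psi$ is verified --- the paper checks the fixed-point identity $\psi(a)=\sub_X(w(-,a),\psi)$ and appeals to a characterization of Scott closed weights, whereas you reduce it to the identity $\bigvee_a\lambda(a)\with w(x,a)=w(x,\sup\lambda)$, i.e.\ to the $[0,1]$-Scott continuity of $w(x,-)$ (the paper's Lemma \ref{w(x,-)}), which you prove inline from the same two ingredients ($w(-,\sup\lambda)\leq\lambda$ and interpolation).
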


\begin{proof}First, we show that  $\phi$ is Scott closed if and only if  for all $a\in X$, \[ \phi (a)= \sub_X(w(-,a),\phi).\] Necessity follows   from Lemma \ref{Scott closed weights}  and  that $w(-,a)$ is a forward Cauchy weight with $a$ being a supremum by Proposition \ref{way below as left adjoint}. For sufficiency, we check that for each forward Cauchy weight $\lambda$, $\sub_X(\lambda,\phi)\leq \phi(\sup\lambda)$. Let $a=\sup\lambda$. Since $w(-,a)$ is a forward Cauchy weight and $w(-,a)\leq\lambda$, then $\sub_X(\lambda,\phi)\leq \sub_X(w(-,a),\phi) =\phi(\sup\lambda)$, as desired.

Next, we prove the conclusion. Write $\psi$ for the map $X\lra[0,1]$ defined by \[\psi(a)=  \sub_X(w(-,a),\phi).\] It is easily verified that (i) $\psi$ is a weight of $X$; (ii) $\phi\leq\psi$; and (iii)  $\psi\leq\lambda$ for any Scott closed weight $\lambda$ that majorizes $\phi$. To finish the proof, it remains to check that  $\psi$ is Scott closed. By the interpolation property of the  way below $[0,1]$-relation $w$, one   verifies that  $\psi$ satisfies that \( \psi (a)= \sub_X(w(-,a),\psi)\) for all $a\in X$, hence it is Scott closed.    \end{proof}

\begin{prop}\label{domain is sober} For each $[0,1]$-domain $X$, the   space $\Sigma X$ is sober.\end{prop}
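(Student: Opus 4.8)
The goal is to show that every inhabited and irreducible closed set $\phi$ of $\Sigma X$ is the closure of $1_a$ for a unique $a\in X$, the natural candidate being $a=\sup\phi$.

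The first task is to recognise the closed sets in question. By definition a closed set of $\Sigma X$ is a Scott closed weight of $X$, and an inhabited, irreducible one is a forward Cauchy weight by Proposition~\ref{FC=irr}; here one has to check that, for a Scott closed weight, being irreducible against the closed sets of $\Sigma X$ is equivalent to being irreducible against arbitrary weights of $X$, which I expect to follow from the explicit description of the Scott closure in a $[0,1]$-domain (Lemma~\ref{Scott closure}) together with the fact that Scott closure commutes with finite joins (condition (cl3)). This matching of the two notions of irreducibility is the step I expect to require the most care; everything after it is formal. Since $X$ is a $[0,1]$-domain it is in particular $\mathcal{D}$-cocomplete, so $a:=\sup\phi$ exists.

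Next I would establish $\phi=X(-,a)$ by two inequalities. For $\phi\le X(-,a)$: by the defining property of the supremum, $X(a,a)=\CP X(\phi,X(-,a))=\sub_X(\phi,X(-,a))=\bw_{x\in X}\bigl(\phi(x)\to X(x,a)\bigr)$, and since $X(a,a)\ge 1$ each factor $\phi(x)\to X(x,a)$ equals $1$, i.e. $\phi(x)\le X(x,a)$ for all $x$. For $\phi\ge X(-,a)$: since $\phi$ is Scott closed and forward Cauchy, Lemma~\ref{Scott closed weights} applied to $\phi$ itself gives $\phi(a)=\phi(\sup\phi)=\sub_X(\phi,\phi)\ge 1$, so $\phi(a)=1$; then the weight axiom $\phi(a)\with X(x,a)\le\phi(x)$ yields $X(x,a)\le\phi(x)$. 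Hence $\phi=X(-,a)$, which by Example~\ref{complement} is exactly the closure of $1_a$ in $\Sigma X$. For uniqueness, if $\phi$ is also the closure of $1_b$ then $X(-,a)=X(-,b)$; evaluating at $a$ and at $b$ gives $X(a,b)\ge 1$ and $X(b,a)\ge 1$, so $a=b$ because $X$, being a $[0,1]$-domain, is separated.
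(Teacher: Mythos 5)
There is a genuine gap, and it sits exactly where you suspected: the ``matching of the two notions of irreducibility''. An inhabited closed set of $\Sigma X$ is only known to be irreducible against \emph{Scott closed} weights, whereas Proposition \ref{FC=irr} requires irreducibility against \emph{arbitrary} weights. These are not the same condition: for a weight $\phi$ one only has $\sub_X(\lambda,\phi)\leq\sub_X(\lambda,\overline{\phi})$, and the inequality can be strict (take $\lambda=X(-,b)$ and any non-closed $\phi$ with $\overline{\phi}=X(-,b)$ and $\phi(b)<1$; then $\sub_X(\lambda,\phi)=\phi(b)<1=\sub_X(\lambda,\overline{\phi})$ by Yoneda). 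So the irreducibility of $\lambda$ against closed sets, via (cl3), only controls the quantities $\sub_X(\lambda,\overline{\phi_i})$, not $\sub_X(\lambda,\phi_i)$, and no combination of Lemma \ref{Scott closure} with (cl3) closes that gap for $\lambda$ itself. The claim you need is in fact equivalent to the conclusion of the proposition: already in the crisp case a Scott-closed set irreducible against closed sets need not be irreducible against arbitrary lower sets (i.e.\ need not be directed) --- Johnstone's non-sober dcpo provides a counterexample --- and for domains it becomes true only \emph{because} the Scott topology is sober, which is what is being proved. So as written the argument is circular at its key step.

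The paper's proof avoids this by not working with $\lambda$ directly: it introduces the auxiliary weight $\Downarrow\!\lambda=\bigvee_{a\in X}\lambda(a)\with w(-,a)$ and proves the identity $\sub_X(\Downarrow\!\lambda,\phi)=\sub_X(\lambda,\overline{\phi})$ for \emph{all} weights $\phi$ (using Lemma \ref{Scott closure}). This identity is the whole point: it converts irreducibility of $\lambda$ against closed sets into irreducibility of $\Downarrow\!\lambda$ against arbitrary weights, so that Proposition \ref{FC=irr} applies to $\Downarrow\!\lambda$, whose supremum $b$ then satisfies $\lambda=X(-,b)$. Your second half (deducing $\phi=X(-,\sup\phi)$ once a forward Cauchy Scott closed weight is in hand, and the uniqueness via separatedness) is correct and matches the paper's Step 2 in spirit, but the proof cannot be completed along your route without importing something like the $\Downarrow\!\lambda$ construction.
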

\begin{proof}  Let $\lambda$ be an inhabited and irreducible closed set of $\Sigma X$. We have to show that there is  a unique element $b$ of $X$ such that $\lambda$ is the closure of $1_b$, i.e., $\lambda=X(-,b)$. Uniqueness is obvious since $X$ is separated. Now we prove the existence. Let \[\Downarrow\!\lambda=\bv_{a\in X}\lambda(a)\with w(-,a),\] where $w$ is the way below $[0,1]$-relation of $X$. We   show in two steps that $\Downarrow\!\lambda$ is a forward Cauchy weight of $X$ and its supremum satisfies the requirement.

\textbf{Step 1}. We use the characterization in Proposition \ref{FC=irr} to show that $\Downarrow\!\lambda$ is a forward Cauchy weight of $X$.

For each $a\in X$,  since $w(-,a)$ is a forward Cauchy weight, then  \(\bv_{x\in X}w(x,a)=1\) by Proposition \ref{FC=irr}.  Thus, \[\bv_{x\in X}\Downarrow\!\lambda(x)= \bv_{a\in X}\bv_{x\in X}\lambda(a)\with w(x,a)=\bv_{a\in X}\lambda(a)=1,\] showing that $\Downarrow\!\lambda$ is inhabited.

To see that $\Downarrow\!\lambda$ is irreducible, first we show that for each weight $\phi$ of $X$, \[\sub_X(\Downarrow\!\lambda, \phi)=\sub_X(\lambda, \overline{\phi}).\]  We calculate: \begin{align*}\sub_X(\Downarrow\!\lambda, \phi) &= \bw_{a\in X}(\lambda(a)\ra \sub_X(w(-,a),\phi) )\\ &= \bw_{a\in X}(\lambda(a)\ra \overline{\phi}(a) )\\ &= \sub_X(\lambda, \overline{\phi}).  \end{align*} Then, for any  weights $\phi_1,\phi_2$   of $X$, we have
\begin{align*}\sub_X(\Downarrow\!\lambda, \phi_1\vee\phi_2)  &= \sub_X(\lambda, \overline{\phi_1 \vee \phi_2}) \\  &= \sub_X(\lambda, \overline{\phi_1}\vee\overline{\phi_2}) \\ &= \sub_X(\lambda, \overline{\phi_1})\vee\sub_X(\lambda,\overline{\phi_2}) \\ &= \sub_X(\Downarrow\!\lambda,\phi_1)\vee\sub_X(\Downarrow\!\lambda,\phi_2), \end{align*}  hence $\Downarrow\!\lambda$ is irreducible.

\textbf{Step 2}. Since $\Downarrow\!\lambda$ is a forward Cauchy weight, it has a supremum, say $b$. We claim that $b$ satisfies the requirement.

On one hand, since $\lambda$ is   Scott closed,  then \(1=\sub_X(\Downarrow\!\lambda,\lambda)=\lambda(\sup\Downarrow\!\lambda)= \lambda(b),\) hence $X(-,b)\leq\lambda$.
On the other hand, since for each $a\in X$, \[\sub_X(w(-,a),\Downarrow\!\lambda)\geq \sub_X(w(-,a),\lambda(a)\with w(-,a))\geq \lambda(a),\] it follows from Lemma \ref{Scott closure} that $\lambda$ is the closure of $\Downarrow\!\lambda$, hence $\lambda\leq X(-,b)$ because $X(-,b)$ is Scott closed and contains $\Downarrow\!\lambda$. \end{proof}

Now we   present the main result in this paper.

\begin{thm}\label{preserve product}
If $X$ is a separated and injective $[0,1]$-approach space, then $\Omega(X)$ is a continuous $[0,1]$-lattice and $X=\Sigma\Omega(X)$.
\end{thm}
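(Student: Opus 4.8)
The plan is to exploit Corollary \ref{injective}, which identifies separated injective $[0,1]$-approach spaces with retracts of powers of the space $\mathbb{K}$, together with the identification $\mathbb{K}=\Sigma([0,1],\alpha_R)$ recorded after Proposition \ref{YC=C}. Concretely, suppose $X$ is separated and injective, so there are continuous maps $s\colon X\lra \mathbb{K}^J$ and $r\colon \mathbb{K}^J\lra X$ with $r\circ s=\id_X$. The first step is to observe that $\mathbb{K}^J=\Sigma\big(([0,1],\alpha_R)^J\big)$: indeed $\Sigma$ preserves products since the Scott $[0,1]$-cotopology of a product is generated by the pulled-back subbasic closed sets, and more to the point the specialization $[0,1]$-order functor $\Omega$ is a right adjoint, hence preserves products, giving $\Omega(\mathbb{K}^J)=([0,1],\alpha_R)^J$, which is a complete (indeed continuous, by Example \ref{dr is continuous} and Proposition \ref{product}) $[0,1]$-lattice. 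Applying $\Omega$ to the retraction $r\circ s=\id_X$ realizes $\Omega(X)$ as a retract of $([0,1],\alpha_R)^J$ in $[0,1]\text{-}{\sf Ord}$.

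Next I would upgrade this order-theoretic retraction to the $[0,1]$-Scott-continuous setting so that Lemma \ref{cc} (or the retract stability of continuous $[0,1]$-lattices noted before Lemma \ref{cc}) applies. The key input here is Proposition \ref{YC=C}: since $\Sigma$ is a full embedding, the continuous maps $s,r$ between $[0,1]$-approach spaces, once we know their domains and codomains are of the form $\Sigma(-)$, are forced to be $[0,1]$-Scott continuous maps between the underlying $[0,1]$-ordered sets. But $\Omega(X)$ need not a priori be $\mathcal{D}$-cocomplete; so the cleaner route is: $\Omega(X)$ is a retract (in $[0,1]\text{-}{\sf Ord}$) of the complete $[0,1]$-lattice $([0,1],\alpha_R)^J$, hence by Proposition \ref{retractcomplete} it is itself a complete $[0,1]$-lattice. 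Then, because retracts are in particular split by adjoint pairs on complete $[0,1]$-lattices, the idempotent $s\circ r$ on $([0,1],\alpha_R)^J$ is realized by maps that preserve $[0,1]$-order, and $\Omega(X)$ sits inside a continuous $[0,1]$-lattice as a retract in $[0,1]\text{-}{\sf Ord}^\uparrow$ via Corollary \ref{Y=S} (Scott continuity for maps between complete $[0,1]$-lattices is just Scott continuity of the underlying order maps). Invoking the fact stated before Lemma \ref{cc} that retracts of continuous $[0,1]$-lattices in $[0,1]\text{-}{\sf Ord}^\uparrow$ are continuous $[0,1]$-lattices, we conclude $\Omega(X)$ is a continuous $[0,1]$-lattice.

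For the second assertion $X=\Sigma\Omega(X)$, I would argue that the retraction transports through $\Sigma$. We have $s\colon X\lra \mathbb{K}^J$ continuous, where $\mathbb{K}^J=\Sigma\big(([0,1],\alpha_R)^J\big)$; since $\Omega(X)$ is now known to be a continuous $[0,1]$-lattice, both $s$ and $r$ are $[0,1]$-Scott continuous for the underlying $[0,1]$-orders by Proposition \ref{YC=C}, so $\Sigma r\circ \Sigma s=\Sigma\id=\id_{\Sigma\Omega(X)}$ exhibits $\Sigma\Omega(X)$ as a retract of $\mathbb{K}^J$ as well. It then remains to see that the two retracts $X$ and $\Sigma\Omega(X)$ of $\mathbb{K}^J$, which have the same underlying set and the same specialization $[0,1]$-order (namely $\Omega X=\Omega\Sigma\Omega X$, the latter by the remark $\Omega\Sigma=\id$ after Proposition \ref{YC=C}), in fact coincide as $[0,1]$-approach spaces. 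For this I would compare closed sets: a closed set of $X$ (i.e.\ a continuous map $X\lra \mathbb{K}$, by Corollary \ref{injective}) is the same as an element of $\kappa(\delta_X)$, and since $s$ is a section with retraction $r$, $\kappa(\delta_X)=\{\lambda\circ s\mid \lambda\in\kappa(\delta_{\mathbb{K}^J}),\ \lambda\circ s\circ r=\lambda\}$ matches exactly the closed sets of $\Sigma\Omega(X)$ computed via the Scott $[0,1]$-cotopology; alternatively, since every closed set of $\Sigma\Omega X$ is a Scott closed weight $X(-,a)$-type generated object and closure in $\Sigma\Omega X$ of $1_A$ equals $\Omega(X)(-,\text{-})$-closure, one checks $\overline{1_A}$ in $X$ equals $\overline{1_A}$ in $\Sigma\Omega X$ using that both agree on the order and both are strong $[0,1]$-cotopologies generated by the $\overline{1_A}$'s (equation (\ref{subbasis})), so it suffices that $\delta_X(-,A)$ is Scott closed as a weight of $\Omega(X)$ — which follows because $\delta_X(-,A)=r$-image pullback of a Scott closed weight on the continuous $[0,1]$-lattice $([0,1],\alpha_R)^J$, and Scott closed weights are preserved under such pullbacks along right adjoints.

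The main obstacle I anticipate is the last step: carefully verifying that the $[0,1]$-approach structure $\delta_X$ is recovered by the Scott construction, i.e.\ that $\delta_X(x,A)=\overline{1_A}(x)$ computed in the Scott $[0,1]$-cotopology $\sigma(\Omega X)$. The retract picture gives this for the $\mathbb{K}$-valued closed sets immediately, but one must check that the subbasis $\{\overline{1_A}^{\,X}\mid A\subseteq X\}$ generating $\kappa(\delta_X)$ really consists of Scott closed weights of $\Omega(X)$ and generates exactly $\sigma(\Omega X)$; the inclusion $\kappa(\delta_X)\subseteq \sigma(\Omega X)$ needs that each $\delta_X(-,A)\colon \Omega(X)\lra([0,1],\alpha_R)$ is $[0,1]$-Scott continuous, for which the cleanest argument uses that $\delta_X(-,A)$ factors through the retraction $r$ from a Scott closed weight upstairs (Example \ref{complement} and preservation of Scott closed weights under $f^{\leftarrow}$ for $f$ a right adjoint), while the reverse inclusion $\sigma(\Omega X)\subseteq\kappa(\delta_X)$ uses that $X$, being a retract of $\mathbb{K}^J$, contains all the closed sets that its specialization order forces. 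Getting these two inclusions to meet — rather than merely showing $X$ and $\Sigma\Omega X$ share a subbasis — is where the real work lies, and it will likely lean on Proposition \ref{domain is sober} or at least on the explicit Scott-closure formula of Lemma \ref{Scott closure}.
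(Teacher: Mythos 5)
Your proposal follows the paper's own route for the first assertion: retract $X$ through $\mathbb{K}^I$ via Corollary \ref{injective}, identify $\mathbb{K}^I=\Sigma\big(([0,1],\alpha_R)^I\big)$, get completeness of $\Omega(X)$ from Proposition \ref{retractcomplete}, and upgrade the retraction to $[0,1]\text{-}{\sf Ord}^\uparrow$ via Corollary \ref{Y=S} and Proposition \ref{YC=C} applied to $s\circ r$, so that retract-stability of continuous $[0,1]$-lattices finishes the job. Two remarks there. First, your one-line justification that ``$\Sigma$ preserves products'' is not a proof: even classically the Scott topology of a product of dcpos need not be the product of the Scott topologies, and the identity $\Sigma\big(([0,1],\alpha_R)^I\big)=\mathbb{K}^I$ is exactly Lemma \ref{Sigma preserves product}, whose proof occupies most of the section (it needs Corollary \ref{way-product} and Lemma \ref{way below in prod}); you should cite it rather than wave at it. Second, the upgrade step should be spelled out as: $\Omega(r)$ and $\Omega(s)$ preserve order, $r\circ s=\id_X$, and $s\circ r$ is Scott continuous on $\big([0,1],\geq\big)^I$ by Propositions \ref{YC=C} and Corollary \ref{Y=S}; these three facts force $\Omega(r)$ and $\Omega(s)$ to be Scott continuous.

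The genuine gap is in the second assertion, which you explicitly leave unfinished. You set up precisely the right configuration --- $s\colon X\lra\mathbb{K}^I$ and $r\colon\mathbb{K}^I\lra X$ continuous by hypothesis, and (once $r,s$ are known to be $[0,1]$-Scott continuous) $s\colon\Sigma\Omega(X)\lra\mathbb{K}^I$ and $r\colon\mathbb{K}^I\lra\Sigma\Omega(X)$ continuous by functoriality of $\Sigma$ and Lemma \ref{Sigma preserves product} --- and then abandon it for a comparison of closed sets, sobriety, and the Scott-closure formula that you admit you cannot complete. None of that is needed: the identity map $X\lra\Sigma\Omega(X)$ is the composite $r\circ s\colon X\lra\mathbb{K}^I\lra\Sigma\Omega(X)$ of two continuous maps, and the identity map $\Sigma\Omega(X)\lra X$ is the composite $r\circ s\colon\Sigma\Omega(X)\lra\mathbb{K}^I\lra X$ of two continuous maps, so the two $[0,1]$-approach structures on the set $X$ refine each other and hence coincide. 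This is the paper's argument, and every ingredient for it is already on your page; the ``real work'' you anticipate in matching the two cotopologies subbasis by subbasis is a detour that dissolves once you compose the maps the two mixed ways.
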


Before proving this theorem, we prove four lemmas first.

\begin{lem}\label{w(x,-)}
Let $X$ be a  $[0,1]$-domain. Then for each $x\in X$, $w(x,-)\colon X\lra ([0,1],\alpha_L)$ is $[0,1]$-Scott continuous.
\end{lem}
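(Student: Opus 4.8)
The plan is to establish the $[0,1]$-Scott continuity of $w(x,-)\colon X\lra([0,1],\alpha_L)$ by checking that it preserves suprema of forward Cauchy weights, and the key tool will be the interpolation property of the way below $[0,1]$-relation (Proposition \ref{interpolation}) together with the fact that $w(x,-)$ already preserves the $[0,1]$-order (listed property (iii) of $w$). Fix $x\in X$ and let $\phi$ be a forward Cauchy weight of $X$ with supremum $a=\sup\phi$. Since $w(x,-)$ preserves $[0,1]$-order and $X(a,-)$ is itself a $[0,1]$-Scott continuous weight map in the appropriate sense, the inequality $\sup (w(x,-))^\ra(\phi)\leq w(x,a)$ is the easy direction: using Equation (\ref{composition as sup}), $\sup(w(x,-))^\ra(\phi)=\bv_{y\in X}\phi(y)\with w(x,y)$, and each term is bounded by $w(x,a)$ because $\phi(y)\leq X(y,a)$ (as $\phi\leq w(-,a)$ or directly from the supremum property) and $X(y,a)\with w(x,y)\leq w(x,a)$ by property (iii) of $w$.

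The substantive direction is $w(x,a)\leq\bv_{y\in X}\phi(y)\with w(x,y)$. Here I would invoke interpolation: $w(x,a)=\bv_{z\in X}w(z,a)\with w(x,z)$. For each $z$, I want to absorb $w(z,a)$ into $\phi$. Since $X$ is a $[0,1]$-domain, $w(-,a)$ is the left adjoint value ${\sf d}(a)$, i.e. the \emph{smallest} forward Cauchy weight with supremum $a$; because $\phi$ is a forward Cauchy weight with $\sup\phi=a$, we get $w(-,a)\leq\phi$ pointwise, hence $w(z,a)\leq\phi(z)$. Therefore $w(z,a)\with w(x,z)\leq\phi(z)\with w(x,z)\leq\bv_{y\in X}\phi(y)\with w(x,y)$, and taking the join over $z$ gives $w(x,a)\leq\bv_{y}\phi(y)\with w(x,y)$, as required. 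Combining the two inequalities yields $w(x,\sup\phi)=\sup(w(x,-))^\ra(\phi)$.

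It remains only to note that $w(x,-)$ does preserve the $[0,1]$-order (property (iii) of $w$), so it is a genuine $[0,1]$-order-preserving map $X\lra([0,1],\alpha_L)$, and the displayed computation shows it is $[0,1]$-Scott continuous. The main obstacle is making sure the inequality $w(-,a)\leq\phi$ is used correctly: it relies on $X$ being a $[0,1]$-domain (so that ${\sf d}(a)=w(-,a)$ exists and is the least forward Cauchy weight with supremum $a$, via Proposition \ref{way below as left adjoint}), not merely $\mathcal{D}$-cocomplete; everything else is a routine application of interpolation and Equation (\ref{composition as sup}).
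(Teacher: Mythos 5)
Your argument is correct, but it is not the route the paper takes. The paper's proof is a two-line categorical observation: $w(x,-)$ is the composite of ${\sf d}\colon X\lra\mathcal{D}X$ (which, being a left adjoint, preserves all suprema and in particular $\mathcal{D}$-suprema) with the evaluation map $\mathcal{D}X\lra([0,1],\alpha_L)$, $\phi\mapsto\phi(x)$, which preserves $\mathcal{D}$-suprema; a composite of $[0,1]$-Scott continuous maps is $[0,1]$-Scott continuous. You instead verify the defining equation $w(x,\sup\phi)=\bv_y\phi(y)\with w(x,y)$ by hand: the inequality $\geq$ from $\phi\leq X(-,\sup\phi)$ and property (iii) of $w$, and the inequality $\leq$ from interpolation (Proposition \ref{interpolation}) plus the minimality $w(-,a)\leq\phi$ of ${\sf d}(a)$ among forward Cauchy weights with supremum $a$. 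Both ingredients you use are legitimate consequences of the adjunction ${\sf d}\dashv\sup$, so the proof goes through; what the paper's argument buys is brevity and the avoidance of interpolation altogether, while yours makes the computation explicit and does not require knowing that evaluation maps on $\mathcal{D}X$ preserve $\mathcal{D}$-suprema. One small slip: in the easy direction you offer ``$\phi\leq w(-,a)$'' as a justification for $\phi(y)\leq X(y,a)$; that inequality goes the wrong way (the correct one, which you use later, is $w(-,a)\leq\phi$), but your alternative justification via the supremum property, namely $1\leq X(a,a)=\sub_X(\phi,X(-,a))$, is the right one, so nothing is lost.
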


\begin{proof}This follows from the
facts that ${\sf d}\colon X\lra\mathcal{D}X$ is a left adjoint and hence preserves suprema, and the projection map $$ \mathcal{D}X\lra ([0,1],\alpha_L), \quad \phi\mapsto \phi(x)$$ preserves $\mathcal{D}$-suprema. This nice argument is pointed out  by an anonymous referee.\end{proof}


\begin{lem}\label{basis}
Let $X$ be a   $[0,1]$-domain.
Then $\{w(x,-)\rightarrow p\mid x\in X, p\in[0,1]\}$ is a subbasis for the closed sets of   $\Sigma X$.\end{lem}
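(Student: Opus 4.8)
The plan is to verify the two halves of the assertion ``$\mathcal{S}\coloneqq\{w(x,-)\rightarrow p\mid x\in X,\ p\in[0,1]\}$ is a subbasis for the closed sets of $\Sigma X$'': first, that every member of $\mathcal{S}$ really is a closed set of $\Sigma X$ (i.e.\ a Scott closed weight of $X$); second, that every Scott closed weight $\phi$ of $X$ is already a \emph{meet} of members of $\mathcal{S}$. Since $\sigma(\alpha)$ is a strong $[0,1]$-cotopology, hence closed under arbitrary meets, these two facts together force the strong $[0,1]$-cotopology generated by $\mathcal{S}$ to coincide with $\sigma(\alpha)$, which is exactly the claim.

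For the first half, fix $x\in X$ and $p\in[0,1]$. By Lemma \ref{w(x,-)} the map $w(x,-)\colon X\lra([0,1],\alpha_L)$ is $[0,1]$-Scott continuous, and by Example \ref{complement} the map ${\rm id}\rightarrow p\colon([0,1],\alpha_L)\lra([0,1],\alpha_R)$ is $[0,1]$-Scott continuous; composing, $w(x,-)\rightarrow p\colon a\mapsto w(x,a)\rightarrow p$ is a $[0,1]$-Scott continuous map $X\lra([0,1],\alpha_R)$. In particular it is $[0,1]$-order-preserving into $([0,1],\alpha_R)$, so it is a weight of $X$, and being $[0,1]$-Scott continuous it is Scott closed. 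Hence $w(x,-)\rightarrow p\in\sigma(\alpha)$, and the strong $[0,1]$-cotopology generated by $\mathcal{S}$ is contained in $\sigma(\alpha)$.

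For the second half, let $\phi$ be any Scott closed weight of $X$. Since $X$ is a $[0,1]$-domain, for each $a\in X$ the weight $w(-,a)$ is forward Cauchy with supremum $a$ (Proposition \ref{way below as left adjoint}); applying Lemma \ref{Scott closed weights} to $\lambda=w(-,a)$ gives
\[
\phi(a)=\sub_X(w(-,a),\phi)=\bw_{x\in X}\big(w(x,a)\rightarrow\phi(x)\big).
\]
Reading the right-hand side as the value at $a$ of the pointwise meet $\bw_{x\in X}\big(w(x,-)\rightarrow\phi(x)\big)$, this says precisely $\phi=\bw_{x\in X}\big(w(x,-)\rightarrow\phi(x)\big)$, a meet of members of $\mathcal{S}$ (take the pair $(x,\phi(x))$ for each $x$). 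Therefore $\phi$ lies in the strong $[0,1]$-cotopology generated by $\mathcal{S}$, and the reverse containment $\sigma(\alpha)\subseteq\langle\mathcal{S}\rangle$ holds as well.

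The argument is essentially routine once Lemma \ref{w(x,-)} and the identity $\phi(a)=\sub_X(w(-,a),\phi)$ for Scott closed $\phi$ (from the proof of Lemma \ref{Scott closure}) are in hand; there is no real obstacle. The only point deserving a word of care is that each subbasic element must be checked to be a genuine weight and not merely an arbitrary $[0,1]$-valued map, but this is automatic since $[0,1]$-Scott continuous maps into $([0,1],\alpha_R)$ are in particular $[0,1]$-order-preserving.
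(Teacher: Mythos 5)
Your proof is correct and follows essentially the same route as the paper's: the first half (each $w(x,-)\rightarrow p$ is Scott closed, as a composite of $w(x,-)$ with ${\rm id}\rightarrow p$) is identical, and your second half derives the identity $\phi(a)=\bw_{x\in X}\bigl(w(x,a)\rightarrow\phi(x)\bigr)$ via Lemma \ref{Scott closed weights} applied to the forward Cauchy weight $w(-,a)$, which is just a repackaging of the paper's computation through Equation (\ref{inclusion as sup}). No gaps.
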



\begin{proof}Since   ${\rm id}\ra p\colon ([0,1],\alpha_L)\lra([0,1],\alpha_R)$ is $[0,1]$-Scott continuous by Example \ref{complement} and $$w(x,-)\colon X\lra ([0,1],\alpha_L)$$ is $[0,1]$-Scott continuous by Lemma \ref{w(x,-)}, it follows that for each $p\in[0,1]$ the map $w(x,-)\ra p$, being a composite of $w(x,-)$ and ${\rm id}\ra p$,  is $[0,1]$-Scott continuous, hence a Scott closed weight of $X$.

Now we  show that
$$\phi(y)=\bigwedge_{x\in X}w(x,y)\rightarrow\phi(x)$$ for each Scott closed weight $\phi$ and each $y\in X$, which  entails the conclusion.

Since $\phi\colon X\lra([0,1],\alpha_R)$ is $[0,1]$-Scott continuous, it follows that \begin{align*}\phi(y)&= \phi(\sup w(-,y))\\ & = \sup\phi^\ra(w(-,y))\\ &=\sub_X(w(-,y),\phi) & \text{(Equation (\ref{inclusion as sup}))}\\ &=\bigwedge_{x\in X}w(x,y)\rightarrow\phi(x),\end{align*}
which completes he proof.
\end{proof}

\begin{lem}\label{way below in prod}
Let $X$ be a continuous $[0,1]$-lattice and $I$ be a set. Then for all $\vec{x}=(x_i)_i$ and $\vec{y}=(y_i)_i$ of $X^I$, $w(\vec{x},\vec{y})\leq\bigwedge\limits_{i\in I}w(x_i,y_i)$. Furthermore, if   $x_i$ is the bottom element of $X_0$ for all but a finite number of $i$, then $w(\vec{x},\vec{y})=\bigwedge\limits_{i\in I}w(x_i,y_i)$.
\end{lem}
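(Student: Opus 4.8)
The plan is to compute $w(\vec x,\vec y)$ directly from the defining formula $w(\vec x,\vec y)=\bigwedge_{\Phi\in\mathcal D(X^I)}X^I(\vec y,\sup\Phi)\ra\Phi(\vec x)$, exploiting the product description of suprema in $X^I$ together with Corollary \ref{way-product}, which identifies $w$ in a continuous $[0,1]$-lattice with joins over the classical way-below relation in the underlying lattice. First I would establish the inequality $w(\vec x,\vec y)\leq\bigwedge_i w(x_i,y_i)$: for each fixed index $i$, the projection $\pi_i\colon X^I\lra X$ is $[0,1]$-Scott continuous and preserves $\mathcal D$-suprema, so given any forward Cauchy weight $\psi$ of $X$ we can pull it back along $\pi_i$ (or rather push forward a suitable forward Cauchy weight of $X^I$) to relate $w(\vec x,\vec y)$ to $w(x_i,y_i)$; concretely, since $w(-,\vec y)$ is a forward Cauchy weight of $X^I$ with supremum $\vec y$ (Proposition \ref{way below as left adjoint}), its image $\pi_i^\ra(w(-,\vec y))$ is a forward Cauchy weight of $X$ with supremum $y_i$, whence $w(-,y_i)\leq\pi_i^\ra(w(-,\vec y))$ and evaluating at $x_i$ gives $w(x_i,y_i)\leq\bigvee_{\vec z:\,z_i=x_i}w(\vec z,\vec y)$; combined with property (ii) of $w$ (monotonicity in the first variable through the $[0,1]$-order, using $X^I(\vec z,\vec x)\geq k$ when $z_i=x_i$ and $z_j=\bot$ elsewhere is not quite what is needed — instead one uses that $w(\vec x,\vec y)\leq X^I(\vec x,\vec y)=\bigwedge_j X_j(x_j,y_j)$ only bounds by the $[0,1]$-order, so the cleaner route is the projection argument) one concludes $w(\vec x,\vec y)\le w(x_i,y_i)$ for every $i$.

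For the reverse inequality under the finiteness hypothesis, let $J=\{i:x_i\neq\bot\}$ be finite and set $\vec y$ arbitrary. Using Corollary \ref{way-product}, $w(x_i,y_i)=\bigvee_{a_i\ll y_i}X_i(x_i,a_i)$ where $\ll$ is the way-below relation of $(X_i)_0=X_0$. Fix $r<\bigwedge_i w(x_i,y_i)$; for each $i\in J$ choose $a_i\ll y_i$ with $r\le X(x_i,a_i)$, and for $i\notin J$ put $a_i=\bot$. Because $J$ is finite, $\vec a=(a_i)_i$ satisfies $\vec a\ll\vec y$ in $(X^I)_0$ (the underlying lattice is a product of continuous lattices, in which a tuple is way below another exactly when it is way below coordinatewise and all but finitely many entries are $\bot$, by the standard description — this is precisely the argument already used in the proof of Proposition \ref{product}). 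Then $X^I(\vec x,\vec a)=\bigwedge_i X(x_i,a_i)\ge r$, and by Corollary \ref{way-product} applied to the continuous $[0,1]$-lattice $X^I$ (which is a continuous $[0,1]$-lattice by Proposition \ref{product}), $w(\vec x,\vec y)=\bigvee_{\vec a\ll\vec y}X^I(\vec x,\vec a)\ge r$. Letting $r\uparrow\bigwedge_i w(x_i,y_i)$ yields $w(\vec x,\vec y)\ge\bigwedge_i w(x_i,y_i)$, which with the first part gives equality.

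The main obstacle I anticipate is the coordinatewise characterization of the way-below relation in the product lattice $X_0^I$ — one must be careful that it genuinely requires all but finitely many coordinates to be $\bot$, and that this is exactly why the finiteness hypothesis appears in the statement; fortunately the excerpt's proof of Proposition \ref{product} already contains this argument, so I would cite that rather than redo it. A secondary point to handle cleanly is justifying that $X^I$ is a continuous $[0,1]$-lattice so that Corollary \ref{way-product} applies to it; this is Proposition \ref{product}. Everything else is a routine manipulation of the inclusion $[0,1]$-order $\sub$ and the formula for $w$.
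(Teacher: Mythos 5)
Your treatment of the ``furthermore'' half is correct and is essentially the paper's own argument: apply Proposition \ref{product} so that Corollary \ref{way-product} gives $w(\vec{x},\vec{y})=\bigvee_{\vec{a}\ll\vec{y}}X^I(\vec{x},\vec{a})$, and use the coordinatewise description of $\ll$ in $(X^I)_0=(X_0)^I$ (way-below in each coordinate, $\bot$ in all but finitely many) to manufacture, for each $r<\bigwedge_i w(x_i,y_i)$, a tuple $\vec{a}\ll\vec{y}$ with $X^I(\vec{x},\vec{a})\geq r$. The paper does the same thing, only phrased by distributing the finite meet over the joins instead of introducing $r$.

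The first inequality, however, is not actually established: your projection argument runs in the wrong direction. From $\sup\pi_i^\ra(w(-,\vec{y}))=y_i$ and the adjunction ${\sf d}\dashv\sup$ you correctly obtain $w(-,y_i)\leq\pi_i^\ra(w(-,\vec{y}))$ and hence $w(x_i,y_i)\leq\bigvee_{\vec{z}\,:\,z_i=x_i}w(\vec{z},\vec{y})$, but this is an \emph{upper} bound on $w(x_i,y_i)$ by a join that already contains $w(\vec{x},\vec{y})$ as one of its terms, so it cannot yield $w(\vec{x},\vec{y})\leq w(x_i,y_i)$; your parenthetical aside acknowledges the difficulty and the paragraph then simply asserts the conclusion. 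The repair is a one-liner from the very formula you use in the second half: for every $\vec{a}\ll\vec{y}$ one has $a_j\ll y_j$, hence $X^I(\vec{x},\vec{a})\leq X(x_j,a_j)\leq\bigvee_{a\ll y_j}X(x_j,a)=w(x_j,y_j)$, and taking the join over all $\vec{a}\ll\vec{y}$ gives $w(\vec{x},\vec{y})\leq w(x_j,y_j)$ for each $j$; this is exactly the paper's argument. (If you prefer an adjoint-flavoured proof, the correct direction uses inverse rather than direct images: for $\phi\in\mathcal{D}X$ one checks that $\pi_i^\leftarrow(\phi)$ is a forward Cauchy weight of $X^I$ whose supremum has $\sup\phi$ in coordinate $i$ and the top element elsewhere, whence $w(\vec{x},\vec{y})\leq X(y_i,\sup\phi)\ra\phi(x_i)$ for every $\phi$, and the meet over $\phi$ gives the claim.)
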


\begin{proof}
By Proposition \ref{product}, $X^I$ is a continuous $[0,1]$-lattice, hence
$$w(\vec{x},\vec{y})=\bigvee_{\vec{a}\ll \vec{y}}X^I(\vec{x},\vec{a})$$
For each $\vec{a}\ll \vec{y}$ and $j\in I$, we have $a_j\ll y_j$, so
$$X^I(\vec{x},\vec{a})\leq X(x_j,a_j)\leq\bigvee_{a\ll y_j}X(x_j,a)=w(x_j,y_j),$$
which implies that
$$w(\vec{x},\vec{y})\leq\bigwedge\limits_{i\in I} w(x_i,y_i).$$

Now suppose there is a finite set $J\subseteq I$, such that, $x_i$ is the bottom element of $X_0$  for all $i\in I\setminus J$. Then
$$\bigwedge_{i\in I}w(x_i,y_i)=\bigwedge_{j\in J}w(x_j,y_j)=\bigwedge_{j\in J}\bigvee_{b_j\ll y_j}X(x_j,b_j).$$
For each $j\in J$ and $b_j\ll y_j$, let $\vec{c}=(c_j)_{j\in I}$, where $c_j$ equals $b_j$ if $j\in J$  and  is the bottom element of $X_0$ otherwise. Then $\vec{c}\ll\vec{y}$, thus,
\begin{align*}\bigwedge_{i\in I}w(x_i,y_i)&\leq\bigvee_{\vec{b}\ll\vec{y}}X^I(\vec{x},\vec{b}) =w(\vec{x},\vec{y}).\qedhere \end{align*}
\end{proof}

\begin{lem}\label{Sigma preserves product} For each  set $I$, $\Sigma (([0,1],\alpha_R)^I) =\mathbb{K}^I$.
\end{lem}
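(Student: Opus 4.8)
The plan is to use Proposition \ref{[0,1]-app = strong}: both $\Sigma(([0,1],\alpha_R)^I)$ and $\mathbb{K}^I$ live on the set $[0,1]^I$, and a $[0,1]$-approach space is determined by its strong $[0,1]$-cotopology (its family of closed sets), so it suffices to prove that the two spaces have the same closed sets. Put $Y=([0,1],\alpha_R)^I$; by Example \ref{dr is continuous} and Proposition \ref{product}, $Y$ is a continuous $[0,1]$-lattice, so Lemma \ref{basis}, Corollary \ref{way-product}, Proposition \ref{interpolation}, Proposition \ref{way below as left adjoint} and Lemma \ref{way below in prod} all apply to it. On the $\mathbb{K}^I$-side: since $\mathbb{K}^I$ is a product in $[0,1]\text{-}{\sf App}$, its closed sets form the coarsest strong $[0,1]$-cotopology on $[0,1]^I$ making every projection $\pi_i\colon[0,1]^I\lra[0,1]$ continuous into $\mathbb{K}$; as $\kappa(\delta_{\mathbb{K}})$ is, by Corollary \ref{injective}, generated as a strong $[0,1]$-cotopology by ${\rm id}_{[0,1]}$ alone, and precomposition with $\pi_i$ commutes with $\vee$, $\bigwedge$, $p\with(-)$ and $p\ra(-)$, this coarsest cotopology is the one generated by $\{\pi_i\mid i\in I\}$. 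On the $\Sigma Y$-side: the closed sets are exactly the Scott closed weights of $Y$.

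The first (easy) inclusion is ``closed in $\mathbb{K}^I$ implies closed in $\Sigma Y$'': I would check that each $\pi_i\colon Y\lra([0,1],\alpha_R)$ is a Scott closed weight of $Y$. It preserves the $[0,1]$-order since $Y(\vec{x},\vec{y})=\bigwedge_j\alpha_R(x_j,y_j)\leq\alpha_R(x_i,y_i)$, and, being a projection of a product, it is a left adjoint in $[0,1]\text{-}{\sf Ord}$, hence preserves suprema of all weights, in particular of forward Cauchy ones; thus it is $[0,1]$-Scott continuous. Consequently the strong $[0,1]$-cotopology generated by $\{\pi_i\mid i\in I\}$ is contained in the cotopology of Scott closed weights of $Y$, which is exactly the assertion.

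The substantial inclusion is ``closed in $\Sigma Y$ implies closed in $\mathbb{K}^I$''. Let $\phi$ be a Scott closed weight of $Y$. Arguing as in the proof of Lemma \ref{basis}: by Corollary \ref{way-product}, $w(-,\vec{y})=\bigvee_{\vec{a}\ll\vec{y}}Y(-,\vec{a})$, and by Proposition \ref{way below as left adjoint} this is a forward Cauchy weight with supremum $\vec{y}$, so Scott closedness of $\phi$ and the Yoneda lemma give $\phi(\vec{y})=\sub_Y(w(-,\vec{y}),\phi)=\bigwedge_{\vec{a}\ll\vec{y}}\phi(\vec{a})$; and since $Y_0$ is a continuous lattice, the interpolation property gives $w(\vec{a},\vec{y})\geq 1$ whenever $\vec{a}\ll\vec{y}$. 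Combining this with the weight inequality $\phi(\vec{y})\leq w(\vec{a},\vec{y})\ra\phi(\vec{a})$, which holds for every $\vec{a}$, and recalling that $\vec{a}\ll\vec{y}$ in $Y_0$ forces all but finitely many coordinates of $\vec{a}$ to equal the bottom element $1$ of $([0,1],\alpha_R)_0$, one obtains
\[\phi=\bigwedge_{\vec{a}\in S}\bigl(w(\vec{a},-)\ra\phi(\vec{a})\bigr),\qquad S:=\{\vec{a}\in[0,1]^I\mid a_i=1\text{ for all but finitely many }i\}.\]
Since closed sets of $\mathbb{K}^I$ are closed under arbitrary meets, it remains to show that each $w(\vec{a},-)\ra p$ with $\vec{a}\in S$ and $p\in[0,1]$ is closed in $\mathbb{K}^I$. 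Let $F$ be the finite set of coordinates at which $\vec{a}$ differs from $1$. By Lemma \ref{way below in prod}, $w(\vec{a},\vec{z})=\bigwedge_{i\in F}w(a_i,z_i)$; as $[0,1]$ is a chain and $F$ is finite, this meet is attained, so $(\bigwedge_{i\in F}w(a_i,z_i))\ra p=\bigvee_{i\in F}(w(a_i,z_i)\ra p)$, i.e.
\[w(\vec{a},-)\ra p=\bigvee_{i\in F}\bigl((w(a_i,-)\ra p)\circ\pi_i\bigr)\]
(the empty join being the constant map $p$, which belongs to every strong $[0,1]$-cotopology). By Lemma \ref{basis} applied to $([0,1],\alpha_R)$, each $w(a_i,-)\ra p$ is a closed set of $\mathbb{K}=\Sigma([0,1],\alpha_R)$; since $\pi_i\colon\mathbb{K}^I\lra\mathbb{K}$ is continuous, each $(w(a_i,-)\ra p)\circ\pi_i$ is closed in $\mathbb{K}^I$, and a finite join of closed sets is closed. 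Hence $\phi$ is closed in $\mathbb{K}^I$.

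Putting the two inclusions together yields $\Sigma(([0,1],\alpha_R)^I)=\mathbb{K}^I$. The main obstacle is that the tempting one-line argument ``$\phi$ is $[0,1]$-Scott continuous, hence $\phi\colon\mathbb{K}^I\lra\mathbb{K}$ is continuous'' is circular, since it presupposes precisely the identity to be proved; one is therefore forced down to the explicit generators $\pi_i$, and the two facts that make this work are the coordinatewise description of the way-below $[0,1]$-relation of the product on finite-support tuples (Lemma \ref{way below in prod}) and the chain identity $(\bigwedge_{i\in F}q_i)\ra p=\bigvee_{i\in F}(q_i\ra p)$ for finite $F$, which together let one trade the meet-stability of the Scott $[0,1]$-cotopology of $Y$ for the ``finite join of pulled-back $\mathbb{K}$-closed sets'' structure that the product $\mathbb{K}^I$ carries.
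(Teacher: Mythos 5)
Your proof is correct, and at its core it uses the same two technical ingredients as the paper's: the reduction to subbasic closed sets of the form $w(\cdot,-)\ra p$, and the fact (Lemma \ref{way below in prod} plus the finite-support property of $\ll$ in a product of lattices with bottom) that over finite-support points these become finite joins of closed sets pulled back along projections. The organization, however, is genuinely different and somewhat cleaner. The paper proves $\Sigma(X^I)=(\Sigma X)^I$ by invoking Lemma \ref{basis} for $X^I$, which yields the subbasis $\{w(\vec x,-)\ra p\}$ indexed by \emph{all} $\vec x\in X^I$; it must then decompose each $w(\vec x,-)\ra p$ for possibly infinite-support $\vec x$ as a meet of auxiliary closed sets $\lambda^r_{\vec y}\ra p=(r\ra p)\vee\bigvee_{i\in J}(w(x_i,-)\ra p)$, via the approximation $w(\vec x,-)=\bigvee_{r,\vec y}\lambda^r_{\vec y}$ with $\lambda^r_{\vec y}=r\wedge\bigwedge_{i\in J}w(x_i,-)$. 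You bypass that approximation step entirely by writing the Scott closed weight $\phi$ directly as $\bigwedge_{\vec a\in S}\bigl(w(\vec a,-)\ra\phi(\vec a)\bigr)$ with the index set restricted from the outset to finite-support tuples --- legitimate because $\phi(\vec y)=\bigwedge_{\vec a\ll\vec y}\phi(\vec a)$ and every $\vec a\ll\vec y$ has finite support --- after which the chain identity $\bigl(\bigwedge_{i\in F}q_i\bigr)\ra p=\bigvee_{i\in F}(q_i\ra p)$ for finite $F$ finishes the job. You also spell out the easy inclusion (each $\pi_i$ is a Scott closed weight of $Y$, so the product cotopology of $\mathbb{K}^I$ is contained in the Scott cotopology), which the paper leaves implicit, and you correctly flag and avoid the circular shortcut ``Scott continuous implies continuous on $\mathbb{K}^I$''. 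Your argument generalizes just as the paper's does to $\Sigma(X^I)=(\Sigma X)^I$ for an arbitrary continuous $[0,1]$-lattice $X$, with the $\pi_i$ replaced by the pullbacks $\lambda\circ\pi_i$ of Scott closed weights $\lambda$ of $X$.
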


\begin{proof} We prove a slightly more general result: for each continuous $[0,1]$-lattice $X$ and   each  set $I$, $$\Sigma(X^I)=(\Sigma X)^I.$$
By Lemma \ref{basis}, it suffices to check that for each $\vec{x}\in X^I$ and $p\in[0,1]$, $w(\vec{x},-)\ra p$ is a closed set of the product space $(\Sigma X)^I$. We do this by showing that $w(\vec{x},-)\ra p$ is a meet of closed sets in the product space.

Given $\vec{y}\in X^I$ and $r< w(\vec{x},\vec{y})$,   by Corollary \ref{way-product} we have
$$w(\vec{x},\vec{y})=\bigvee\limits_{\vec{a}\ll\vec{y}}w(\vec{x},\vec{a}), $$
so there is some $\vec{a^r}\ll\vec{y}$ such that $r\leq w(\vec{x},\vec{a^r})$. Since $\vec{a^r}\ll\vec{y}$, there is  a finite subset $J$ of $I$  such that $a^r_i$ is the bottom element of $X_0$ for all $i\in I\setminus J$. Let
$$\lambda^r_{\vec{y}} =r\land\bigwedge\limits_{i\in J}w(x_i,-).$$
Then $\lambda^r_{\vec{y}}\ra p$, which is equal to $$(r\ra p)\vee \bv\limits_{i\in J}(w(x_i,-)\ra p),$$ is a closed set of the product space $(\Sigma X)^I$. Furthermore, it holds that \begin{enumerate}[label=\rm(\roman*)]\setlength{\itemsep}{0pt} \item $r= \lambda^r_{\vec{y}}(\vec{y})$; and \item $\lambda^r_{\vec{y}}\leq w(\vec{x},-)$. \end{enumerate} (i) is obvious. To see (ii), let $\vec{x'}$ be the point with $x'_i=x_i$ for   $i\in J$  and $x'_i=\bot$ for $i\notin J$. Then for all $\vec{z}\in X^I$,
\begin{align*}
\lambda^r_{\vec{y}}(\vec{z})&=r\land w(\vec{x'},\vec{z}) &\text{(Lemma \ref{way below in prod})} \\ &=r\land\bigvee\limits_{\vec{b}\ll\vec{z}}X^I(\vec{x'},\vec{b})\\ &=\bigvee\limits_{\vec{b}\ll\vec{z}}\Big(r\land\bigwedge\limits_{i\in J}X(x_i,b_i)\Big) \\ &\leq\bigvee_{\vec{b}\ll\vec{z}}\bigwedge_{i\in I}X(x_i,b_i)\\ &=w(\vec{x},\vec{z}).
\end{align*}

From (i) and (ii) it follows that \[w(\vec{x},-) =\bv\{\lambda^r_{\vec{y}}\mid r< w(\vec{x},\vec{y}),\vec{y}\in X^I\}.\]
Therefore, being equal to the meet of the family   \[\{\lambda^r_{\vec{y}} \ra p \mid r< w(\vec{x},\vec{y}),\vec{y}\in X^I\}\] of closed sets, $w(\vec{x},-)\ra p$ is  closed.
\end{proof}

\begin{rem}Lemma \ref{Sigma preserves product} was proved in \cite{LiZ18b} when $\with$ is isomorphic to the product t-norm.  The proof therein depends on the fact that for such a t-norm, $([0,1],\alpha_R)$ is an algebraic $[0,1]$-domain. But, Corollary \ref{algebraicity of dr} says that for a general  continuous t-norm, $([0,1],\alpha_R)$ is seldom algebraic.\end{rem}

\begin{proof}[Proof of Theorem \ref{preserve product}]
By Corollary \ref{injective}, there exist continuous maps $s\colon X\lra\mathbb{K}^I$ and $r\colon\mathbb{K}^I\lra X$ such that $r\circ s=1_X$. Since $\Omega(\mathbb{K}^I)=([0,1],\alpha_R)^I$, it follows from Proposition \ref{retractcomplete} that $\Omega(X)$ is a complete $[0,1]$-lattice. To see that $\Omega(X)$ is a $[0,1]$-domain, it suffices to check that both $r\colon ([0,1],\alpha_R)^I\lra\Omega(X)$ and $s\colon\Omega(X)\lra([0,1],\alpha_R)^I$ are $[0,1]$-Scott continuous, for which we only need to check, by Corollary \ref{Y=S}, that both  $r\colon ([0,1],\geq)^I\lra\Omega(X)_0$ and $s\colon\Omega(X)_0\lra([0,1],\geq)^I$ are Scott continuous. This can be  deduced from the following facts: \begin{itemize} \setlength{\itemsep}{0pt} \item both  $r\colon ([0,1],\geq)^I\lra\Omega(X)_0$ and $s\colon\Omega(X)_0\lra([0,1],\geq)^I$ preserve order; \item $r\circ s=1_X$; and \item $s\circ r\colon ([0,1],\geq)^I\lra([0,1],\geq)^I$ is Scott continuous, by Proposition \ref{YC=C} and Corollary \ref{Y=S}.\end{itemize}

Since $\Sigma([0,1],\alpha_R)^I= \mathbb{K}^I$ by Lemma \ref{Sigma preserves product}, it follows that both $r\colon\mathbb{K}^I\lra\Sigma\Omega(X)$ and $s\colon\Sigma\Omega(X)\lra\mathbb{K}^I$ are continuous. Thus, both
$$1_X=r\circ s\colon X\lra\mathbb{K}^I\lra\Sigma\Omega(X)$$
and
$$1_X=r\circ s\colon\Sigma\Omega(X)\lra\mathbb{K}^I\lra X$$
are continuous and consequently, $X=\Sigma\Omega(X)$.
\end{proof}

\begin{cor}Let $X$ be a separated $[0,1]$-approach space. Then $X$ is injective if and only if   the following conditions are satisfied: \begin{enumerate}[label=\rm(\roman*)] \setlength{\itemsep}{0pt}
	\item  $\Omega (X)$ is a retract of some power of $([0,1],\alpha_R)$ in   $[0,1]\text{-}{\sf Ord}^\uparrow$; \item $X=\Sigma\Omega (X)$. \end{enumerate}
\end{cor}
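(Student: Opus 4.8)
The plan is to read both implications off Theorem~\ref{preserve product}, Corollary~\ref{injective} and Lemma~\ref{Sigma preserves product}: the point is simply to move a retraction back and forth between $[0,1]\text{-}{\sf App}$ and $[0,1]\text{-}{\sf Ord}^\uparrow$ along the functors $\Sigma$ and $\Omega$. No new estimate is needed; the only subtlety is that the two structure maps of such a retraction have to be, respectively, $[0,1]$-Scott continuous on the order side and continuous on the approach side, which is handled by the fact that $\Sigma$ is a full embedding (see the discussion around Proposition~\ref{YC=C}) together with Lemma~\ref{Sigma preserves product} identifying the relevant powers.

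For necessity, assume $X$ is injective. Theorem~\ref{preserve product} gives at once $X=\Sigma\Omega(X)$, which is (ii). Moreover, its proof exhibits a set $I$ together with $[0,1]$-Scott continuous maps $s\colon\Omega(X)\lra([0,1],\alpha_R)^I$ and $r\colon([0,1],\alpha_R)^I\lra\Omega(X)$ obtained by applying $\Omega$ to a retraction of $X$ onto a power of $\mathbb{K}$ (Corollary~\ref{injective}), using $\Omega(\mathbb{K}^I)=([0,1],\alpha_R)^I$; and $r\circ s=1_{\Omega(X)}$. Thus $\Omega(X)$ is a retract of $([0,1],\alpha_R)^I$ in $[0,1]\text{-}{\sf Ord}^\uparrow$, which is (i).

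For sufficiency, assume (i) and (ii). By (i) choose a set $I$ and $[0,1]$-Scott continuous maps $r\colon([0,1],\alpha_R)^I\lra\Omega(X)$ and $s\colon\Omega(X)\lra([0,1],\alpha_R)^I$ with $r\circ s=1_{\Omega(X)}$. Applying the functor $\Sigma$ and invoking Lemma~\ref{Sigma preserves product}, so that $\Sigma\big(([0,1],\alpha_R)^I\big)=\mathbb{K}^I$, we obtain continuous maps $\Sigma(r)\colon\mathbb{K}^I\lra\Sigma\Omega(X)$ and $\Sigma(s)\colon\Sigma\Omega(X)\lra\mathbb{K}^I$ with $\Sigma(r)\circ\Sigma(s)=1_{\Sigma\Omega(X)}$. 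By (ii) we have $X=\Sigma\Omega(X)$, hence $X$ is a retract of $\mathbb{K}^I$ in $[0,1]\text{-}{\sf App}$; since $X$ is separated, Corollary~\ref{injective} yields that $X$ is injective. As anticipated, there is no genuine obstacle here: the argument is a transport of a retraction along $\Sigma$ and $\Omega$, with Lemma~\ref{Sigma preserves product} ensuring that the power of $\mathbb{K}$ and the power of $([0,1],\alpha_R)$ correspond.
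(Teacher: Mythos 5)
Your proposal is correct and follows essentially the same route as the paper: the paper's own proof is a one-line combination of Theorem \ref{preserve product} with the facts that $\mathbb{K}$ is injective, $\Omega(\mathbb{K})=([0,1],\alpha_R)$, and functors preserve retracts, which is exactly the transport of retractions along $\Sigma$ and $\Omega$ (via Corollary \ref{injective} and Lemma \ref{Sigma preserves product}) that you spell out. You merely make explicit the point, implicit in the proof of Theorem \ref{preserve product}, that the retraction maps on the order side are $[0,1]$-Scott continuous.
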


\begin{proof}
This follows from a combination of Theorem \ref{preserve product} and the following facts: $\mathbb{K}$ is injective, $\Omega(\mathbb{K})=([0,1],\alpha_R)$ and every functor preserves  retracts. \end{proof}

\begin{thm}\label{6.18}
Let $\with$ be a continuous t-norm that satisfies the condition (S). Then the following statements are equivalent:
\begin{enumerate}[label=\rm(\arabic*)] \setlength{\itemsep}{0pt}
	\item $\with$ is isomorphic to the \L ukasiewicz t-norm.
	\item Every continuous $[0,1]$-lattice with the Scott $[0,1]$-approach structure is injective.
\end{enumerate}
\end{thm}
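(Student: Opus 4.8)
The plan is to reduce the whole equivalence to a single fact in $[0,1]\text{-}{\sf Ord}^\uparrow$, which I call $(\star)$: the continuous $[0,1]$-lattice $([0,1],\alpha_L)$ is a $[0,1]$-Scott continuous retract of some power $([0,1],\alpha_R)^I$ if and only if $\with$ is isomorphic to the \L ukasiewicz t-norm. To see that $(\star)$ yields the theorem, recall that by Proposition \ref{SCL} (valid since $\with$ satisfies (S)) the continuous $[0,1]$-lattices are exactly the retracts of powers of $([0,1],\alpha_L)$ in $[0,1]\text{-}{\sf Ord}^\uparrow$, while $([0,1],\alpha_L)$ is itself one such (Example \ref{continuity of dl}); since retracts compose, \emph{every} continuous $[0,1]$-lattice is a retract of a power of $([0,1],\alpha_R)$ in $[0,1]\text{-}{\sf Ord}^\uparrow$ precisely when $([0,1],\alpha_L)$ is. On the other hand, for any continuous $[0,1]$-lattice $X$ the space $\Sigma X$ is injective iff it is a retract of a power of $\mathbb{K}$ (Corollary \ref{injective}); applying $\Omega$ (with $\Omega\Sigma=\id$ and $\Omega\mathbb{K}^I=([0,1],\alpha_R)^I$) and, conversely, $\Sigma$ (which is full by Proposition \ref{YC=C} and sends $([0,1],\alpha_R)^I$ to $\mathbb{K}^I$ by Lemma \ref{Sigma preserves product}), this happens iff $X=\Omega\Sigma X$ is a retract of a power of $([0,1],\alpha_R)$ in $[0,1]\text{-}{\sf Ord}^\uparrow$. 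Combining, statement (2) is equivalent to ``$([0,1],\alpha_L)$ is a $[0,1]$-Scott continuous retract of a power of $([0,1],\alpha_R)$'', so $(\star)$ is exactly what is needed.

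The easy half of $(\star)$ is the \L ukasiewicz case. There the negation $N(x)=x\ra 0=1-x$ is an order-reversing involution, and a direct check gives $x\ra y=(1-y)\ra(1-x)$, i.e. $\alpha_L(x,y)=\alpha_R(Nx,Ny)$ for all $x,y$. Hence $N\colon([0,1],\alpha_L)\to([0,1],\alpha_R)$ is an isomorphism of $[0,1]$-ordered sets, in particular an isomorphism in $[0,1]\text{-}{\sf Ord}^\uparrow$; so $([0,1],\alpha_L)$ is in fact isomorphic to $([0,1],\alpha_R)$, which proves $(\star)$ in this direction, and hence (1) $\Rightarrow$ (2).

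The hard half is: if $\with$ satisfies (S) and is not isomorphic to the \L ukasiewicz t-norm, then there is no retraction consisting of $[0,1]$-Scott continuous maps $s\colon([0,1],\alpha_L)\to([0,1],\alpha_R)^I$, $r\colon([0,1],\alpha_R)^I\to([0,1],\alpha_L)$ with $rs=\id$. I would argue by contradiction. First, any such $s$ is an isometric embedding: since $r,s$ preserve $[0,1]$-order and $rs=\id$, we get $\alpha_L(x,y)\le([0,1],\alpha_R)^I(sx,sy)\le\alpha_L(r(sx),r(sy))=\alpha_L(x,y)$. Next, by the ordinal sum theorem (Theorem \ref{ordinal sum}), condition (S) forces every non-idempotent summand with positive lower endpoint to be a product summand, so the failure of (1) means either $\with$ is isomorphic to the product t-norm, or $\with$ has an idempotent in $(0,1)$; in both cases $\ra$ is discontinuous at some diagonal point $(a,a)$ with $a<1$. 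Using the descriptions of the way below $[0,1]$-relation $w$ on $([0,1],\alpha_L)$ and $([0,1],\alpha_R)$ (Examples \ref{continuity of dl}, \ref{dr is continuous}, Lemma \ref{characterizing compact element}) and on a power (Lemma \ref{way below in prod}, Corollary \ref{way-product}), one then turns this diagonal discontinuity into a forward Cauchy net in $([0,1],\alpha_L)$ whose Yoneda limit is not matched by $r$ applied to the corresponding forward Cauchy net in $([0,1],\alpha_R)^I$ — contradicting the $[0,1]$-Scott continuity of $r$. For $\with$ isomorphic to the product t-norm the non-embeddability underlying this is essentially contained in \cite{LiZ18b}.

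The step I expect to be the real obstacle is precisely this last one: refuting the existence of the retraction when $\with\ncong\with_{\L}$. Everything preceding it is bookkeeping with the functors $\Omega$, $\Sigma$ and the results of Section 5 and \cite{LZ2020}; the crux genuinely uses the logical structure of $\with$, because the asymmetry between $\alpha_L$ and $\alpha_R$ is invisible to the underlying orders (which are just $[0,1]$ and its opposite, both continuous lattices) and surfaces only through the diagonal discontinuity of $\ra$ that condition (S) permits exactly when $\with$ is not \L ukasiewicz. This is where the ordinal sum theorem and the hypothesis (S) carry the argument.
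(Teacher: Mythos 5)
Your reduction of the theorem to the single statement $(\star)$ is correct and is essentially the same bookkeeping the paper performs (Proposition \ref{SCL} for one direction, the corollary to Theorem \ref{preserve product} together with Lemma \ref{Sigma preserves product} and Proposition \ref{YC=C} for the other), and your proof of the \L ukasiewicz half via the involution $x\mapsto x\ra 0$ matches the paper's. The problem is that the converse half of $(\star)$ --- ruling out a $[0,1]$-Scott continuous retraction of $([0,1],\alpha_R)^I$ onto $([0,1],\alpha_L)$ when $\with\ncong\with_{\L}$ --- is exactly the content of the theorem, and your proposal does not prove it. ``One then turns this diagonal discontinuity into a forward Cauchy net \dots whose Yoneda limit is not matched by $r$'' names a hoped-for mechanism without exhibiting the net, the point of $([0,1],\alpha_R)^I$ at which the mismatch occurs, or the inequality that produces the contradiction. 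As you yourself observe, everything else is routine; so the proposal as written has a genuine gap at its only nontrivial point.

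For comparison, the paper's argument (Lemma \ref{dlinj}, Case 2) is both concrete and more economical: it never needs a general power of $\mathbb{K}$, nor the way-below relation in products. Since $\with\ncong\with_{\L}$ and $\with$ satisfies (S), the ordinal sum decomposition yields idempotents $p<q$ such that $\with$ restricted to $[p,q]$ is G\"{o}del or product; one tests injectivity of $\Sigma([0,1],\alpha_L)$ against the two-point subspace $\{p,q\}$ of $\mathbb{K}$ with the swap map $f(p)=q$, $f(q)=p$. Any continuous extension $\overline{f}\colon\mathbb{K}\lra\Sigma([0,1],\alpha_L)$ is, by fullness of $\Sigma$, a $[0,1]$-Scott continuous, $[0,1]$-order-preserving map $([0,1],\alpha_R)\lra([0,1],\alpha_L)$; order preservation forces $\overline{f}(x)\leq (q\ra x)\ra\overline{f}(q)=x\ra p=p$ for $x\in(p,q)$, while $[0,1]$-Scott continuity forces $\overline{f}(p)=\bv_{x>p}\overline{f}(x)\leq p$, contradicting $\overline{f}(p)=q$. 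Note that this uses the idempotency of $p$ and $q$ in an essential way (to compute $q\ra x$ and $x\ra p$); it is not a generic consequence of a ``diagonal discontinuity of $\ra$,'' so to complete your retraction-based route you would still need to supply an argument of comparable specificity. If you want to salvage your framing, observe that a retraction $r\colon([0,1],\alpha_R)^I\lra([0,1],\alpha_L)$ would make $\Sigma([0,1],\alpha_L)$ injective, so you may as well pass directly to the extension problem above rather than analyzing $r$ on forward Cauchy nets in the power.
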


\begin{lem}\label{dlinj}
The $[0,1]$-approach space $\Sigma([0,1],\alpha_L)$ is  injective   if and only if $\with$ is isomorphic to the {\L}ukasiewicz t-norm.
\end{lem}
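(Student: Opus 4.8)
\emph{If $\with$ is isomorphic to the \L ukasiewicz t-norm}, we may take $\with=\with_\L$. The residual negation $\neg x\coloneqq x\ra 0=1-x$ is then an involution, and a direct computation gives $\alpha_L(x,y)=x\ra y=(1-y)\ra(1-x)=\alpha_R(\neg x,\neg y)$ for all $x,y\in[0,1]$; hence $\neg\colon([0,1],\alpha_L)\lra([0,1],\alpha_R)$ is an isomorphism in $[0,1]\text{-}{\sf Ord}$, and therefore in $[0,1]\text{-}{\sf Ord}^\uparrow$. Applying the full embedding $\Sigma$ yields an isomorphism $\Sigma([0,1],\alpha_L)\cong\Sigma([0,1],\alpha_R)=\mathbb{K}$ of $[0,1]$-approach spaces, and $\mathbb{K}$ is injective by Corollary \ref{injective}; so $\Sigma([0,1],\alpha_L)$ is injective.

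\emph{Conversely, assume $\Sigma([0,1],\alpha_L)$ is injective.} By Theorem \ref{preserve product}, $\Omega\Sigma([0,1],\alpha_L)=([0,1],\alpha_L)$ is a continuous $[0,1]$-lattice, so $\with$ satisfies the condition (S) by Example \ref{continuity of dl}. By Corollary \ref{injective}, $\Sigma([0,1],\alpha_L)$ is a retract of $\mathbb{K}^I$ in $[0,1]\text{-}{\sf App}$ for some set $I$; since $\mathbb{K}^I=\Sigma(([0,1],\alpha_R)^I)$ (Lemma \ref{Sigma preserves product}) and $\Sigma$ is a full embedding with $\Omega\mathbb{K}^I=([0,1],\alpha_R)^I$, this exhibits $([0,1],\alpha_L)$ as a retract of $([0,1],\alpha_R)^I$ in $[0,1]\text{-}{\sf Ord}^\uparrow$: there are $[0,1]$-Scott continuous maps $s\colon([0,1],\alpha_L)\lra([0,1],\alpha_R)^I$ and $r\colon([0,1],\alpha_R)^I\lra([0,1],\alpha_L)$ with $r\circ s=\id$. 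It thus suffices to deduce from the existence of such a retraction that $\with$ is isomorphic to the \L ukasiewicz t-norm.

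This I would argue by contradiction. Suppose $\with$ is not isomorphic to the \L ukasiewicz t-norm. Since $\with$ satisfies the condition (S), the ordinal sum theorem (Theorem \ref{ordinal sum}) and Proposition \ref{S} leave exactly two cases: either $\with$ is Archimedean and --- not being nilpotent --- isomorphic to the product t-norm, or $\with$ has an idempotent $p\in(0,1)$, in which case $([p,1],\with,1)$ is a quantale whose non-idempotent part is a product component. In either case, writing $\sigma_i=\pi_i\circ s$ (a Scott closed weight of $([0,1],\alpha_L)$), one uses the cotensor identities $q\rightarrowtail y=q\ra y$ in $([0,1],\alpha_L)$ and $q\rightarrowtail(y_i)_i=(q\with y_i)_i$ in $([0,1],\alpha_R)^I$, which together with the $[0,1]$-order-preservation of $r$ force $r\big((q\with\sigma_i(v))_i\big)=q\ra v$ for all $q,v\in[0,1]$; feeding this, together with the $[0,1]$-Scott continuity of $r$ --- equivalently, by Corollary \ref{Y=S}, ordinary Scott continuity of its underlying map --- into the analysis of the product component produces a point at which the implication $\ra$ would have to be continuous in a manner that a product component forbids, which is the desired contradiction. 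In the product case this is essentially the argument of \cite{LiZ18b}; in the proper-idempotent case one runs the same computation inside $[p,1]$ and $[p,1]^I$.

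The step I expect to be the genuine obstacle is precisely this last one --- turning ``$([0,1],\alpha_L)$ is a retract of a power of $([0,1],\alpha_R)$ in $[0,1]\text{-}{\sf Ord}^\uparrow$'' into an actual contradiction when $\with$ is not isomorphic to \L ukasiewicz. The delicacy is that $I$ may be large, so $s$ need not be close to a bijection and one cannot directly compare $([0,1],\alpha_L)$ with a single copy of $([0,1],\alpha_R)$; one must instead isolate an invariant of $[0,1]$-ordered sets that is preserved under products and under retracts in $[0,1]\text{-}{\sf Ord}^\uparrow$, is possessed by $([0,1],\alpha_R)$, and fails for $([0,1],\alpha_L)$ outside the \L ukasiewicz case --- for instance a suitable right-continuity, or compactness, property of the way below $[0,1]$-relation at the top element --- and then verify its stability under the relevant retracts uniformly in both cases.
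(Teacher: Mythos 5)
Your forward direction is correct and coincides with the paper's: in the {\L}ukasiewicz case the residual negation is an isomorphism $([0,1],\alpha_L)\cong([0,1],\alpha_R)$ of $[0,1]$-ordered sets, so $\Sigma([0,1],\alpha_L)\cong\Sigma([0,1],\alpha_R)=\mathbb{K}$ is injective. The first reduction in your converse (injectivity forces $([0,1],\alpha_L)$ to be a continuous $[0,1]$-lattice, hence $\with$ satisfies (S) by Example \ref{continuity of dl}) also matches the paper's Case 1.

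The rest of the converse, however, contains a genuine gap --- the one you flag yourself. You never derive a contradiction from ``$([0,1],\alpha_L)$ is a retract of $([0,1],\alpha_R)^I$ in $[0,1]\text{-}{\sf Ord}^\uparrow$'' when $\with$ is not {\L}ukasiewicz; the promised invariant is not produced. Moreover, the one computation you do sketch is unjustified: since $r$ is merely a retraction in $[0,1]\text{-}{\sf Ord}^\uparrow$ and not a right adjoint, preservation of $[0,1]$-order only yields $r\big((q\with\sigma_i(v))_i\big)\leq q\ra v$, not the equality you claim is ``forced''. (Your case split is also imprecise: under (S), the interval $[p,1]$ above an idempotent $p\in(0,1)$ may be entirely idempotent, or contain several components.) The paper closes the converse without ever invoking the retract description: choosing idempotents $p<q$ such that $\with$ restricted to $[p,q]$ is G\"odel or product, it takes the two-point subspace $\{p,q\}$ of $\mathbb{K}$ and the swap $f(p)=q$, $f(q)=p$, which is continuous because a two-point $[0,1]$-approach space is just a $[0,1]$-order and $\alpha_R(p,q)=q\ra p=\alpha_L(q,p)$. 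An extension $\overline{f}\colon\mathbb{K}\lra\Sigma([0,1],\alpha_L)$ would, by fullness of $\Sigma$, be $[0,1]$-Scott continuous as a map $([0,1],\alpha_R)\lra([0,1],\alpha_L)$, whence $\overline{f}(x)\leq\alpha_R(x,q)\ra\overline{f}(q)=x\ra p=p$ for all $x\in(p,q)$ (here the absence of zero divisors on $[p,q]$ is used), and Scott continuity gives $\overline{f}(p)=\bv_{x>p}\overline{f}(x)\leq p<q=f(p)$, a contradiction. You would need to supply an argument of this concrete kind --- exhibiting a specific extension problem that fails --- to complete your proof.
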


\begin{proof}
If $\with$ is isomorphic to the {\L}ukasiewicz t-norm, then \[f\colon ([0,1],\alpha_L)\lra([0,1],\alpha_R),\quad f(x)=x\ra 0\] is an isomorphism,   hence $\Sigma([0,1],\alpha_L)$ is injective.

Next, we show that $\Sigma([0,1],\alpha_L)$ is not injective when $\with$ is not isomorphic to the {\L}ukasiewicz t-norm.  We proceed with two cases.

\textbf{Case 1}.   $\with$ does not satisfy the condition (S). Then,  by Example \ref{continuity of dl}, $([0,1],\alpha_L)$ is not continuous, hence $\Sigma([0,1],\alpha_L)$ is not injective by Theorem \ref{preserve product}.

\textbf{Case 2}.  $\with$ satisfies the condition (S). Since $\with$ is not isomorphic to the {\L}ukasiewicz t-norm, there exist idempotent elements $p,q$ of  $([0,1],\with,1)$ such that the restriction of $\with$ on $[p,q]$ is either isomorphic to the G\"{o}del t-norm or to the product t-norm. Suppose on the contrary that $\Sigma([0,1],\alpha_L)$ is injective. Consider the subspace $\{p,q\}$ of $\mathbb{K}$ and the map
$$f\colon \{p,q\}\lra\Sigma([0,1],\alpha_L), \quad \text{$f(p)=q$, $f(q)=p$}.$$ It is clear that $f$  preserves the specialization $[0,1]$-order. Since the $[0,1]$-approach structure on a set with at most two elements is just a $[0,1]$-order, it follows that $f$ is continuous. So, there is a continuous map $\overline{f}\colon\mathbb{K}\lra\Sigma([0,1],\alpha_L)$ that extends $f$. By Lemma \ref{YC=C}, $$\overline{f}\colon ([0,1],\alpha_R)\lra([0,1],\alpha_L)$$ is $[0,1]$-Scott continuous, hence $\overline{f}\colon [0,1]\lra[0,1]$ transforms non-empty meets to joins. Since $\overline{f}(q)=p$ and $\overline{f}$ preserves $[0,1]$-order, it follows that $$\overline{f}(x)\leq\alpha_R(x,q)\ra \overline{f}(q)=x\ra p=p $$ for all $x\in(p,q)$,  hence $\bv_{x>p}\overline{f}(x)=p$, contradicting that $\overline{f}(p)=f(p)=q$. \end{proof}

\begin{rem}That $\Sigma([0,1],\alpha_L)$ is not injective when $\with$ is isomorphic to the product t-norm is known in  \cite[Example 4.14]{GH}   and \cite[Example 5.10]{LiZ18b}. \end{rem}

\begin{proof}[Proof of Theorem \ref{6.18}]
$(1)\Rightarrow(2)$   Every continuous $[0,1]$-lattice $X$ is a retract of some power of $([0,1],\alpha_L)$ in    $[0,1]\text{-}{\sf Ord}^\uparrow$ by Proposition \ref{SCL}, hence a retract of some power of $([0,1],\alpha_R)$ in  $[0,1]\text{-}{\sf Ord}^\uparrow$ since $([0,1],\alpha_L)$ is isomorphic to $([0,1],\alpha_R)$ in this case. Therefore, $\Sigma X$ is injective.

(2) $\Rightarrow$ (1) Since $([0,1],\alpha_L)$ is continuous by Example \ref{continuity of dl}, it follows that the space $\Sigma([0,1],\alpha_L)$ is injective, and consequently,  $\with$ is isomorphic to the \L ukasiewicz t-norm by Lemma \ref{dlinj}.
\end{proof}



We do not know whether or not Theorem \ref{6.18} holds for all continuous t-norms.

\end{document}